\documentclass[12pt,a4paper,reqno]{amsart} 

\pagestyle{headings}
\setcounter{page}{1}

\addtolength{\hoffset}{-1.25cm}

\addtolength{\textwidth}{2.5cm}

\addtolength{\voffset}{-1cm}

\addtolength{\textheight}{2cm}




\usepackage[T1]{fontenc}       

\usepackage{amsfonts}          

\usepackage{amsmath}

\usepackage{amssymb}

\usepackage{overpic}
\usepackage{euscript} 
\usepackage{eurosym}
\usepackage{comment}
\usepackage{mathrsfs} 
\usepackage{ifthen}
\usepackage{esint} 
\usepackage{color}


\long\def\symbolfootnote[#1]#2{\begingroup%
\def\thefootnote{\fnsymbol{footnote}}\footnote[#1]{#2}\endgroup}

\setlength{\parindent}{20pt}

{\qed\vspace{5pt}}

\usepackage{amsthm}

\newtheoremstyle{lause}
{5pt}
{5pt}
{\slshape}
{\parindent}
{\bfseries}
{.}
{.5em}
{}

\theoremstyle{lause}

\newtheoremstyle{maaritelma}
{5pt}
{5pt}
{\rmfamily}
{\parindent}
{\bfseries}
{.}
{.5em}
{}

\theoremstyle{maaritelma}
\newtheoremstyle{lause}
{5pt}
{5pt}
{\slshape}
{\parindent}
{\bfseries}
{.}
{.5em}
{}

\theoremstyle{lause}
\newtheorem{theorem}{Theorem}[section]
\newtheorem{lemma}[theorem]{Lemma}

\newtheorem{corollary}[theorem]{Corollary}
\newtheorem{problem}[theorem]{Problem}

\newtheoremstyle{maaritelma}
{5pt}
{5pt}
{\rmfamily}
{\parindent}
{\bfseries}
{.}
{.5em}
{}

\theoremstyle{maaritelma}
\newtheorem{definition}[theorem]{Definition}

\newtheorem{example}[theorem]{Example}
\newtheorem{remark}[theorem]{Remark}

\numberwithin{equation}{section}

\begin{document}

\thispagestyle{empty}

\begin{center}

{\large{\textbf{Minimum Riesz energy problems with external fields}}}

\vspace{18pt}

\textbf{Natalia Zorii}

\vspace{18pt}

\footnotesize{\address{Institute of Mathematics, Academy of Sciences
of Ukraine, Tereshchenkivska~3, 01601,
Kyiv-4, Ukraine\\
natalia.zorii@gmail.com }}

\end{center}

\vspace{12pt}

{\footnotesize{\textbf{Abstract.} The paper deals with minimum energy problems in the presence of external fields with respect to the Riesz kernels $|x-y|^{\alpha-n}$, $0<\alpha<n$, on $\mathbb R^n$, $n\geqslant2$.
For quite a general (not necessarily lower semicontinuous) external field $f$, we obtain necessary and/or sufficient conditions for the existence of $\lambda_{A,f}$ minimizing the Gauss functional
\[\int|x-y|^{\alpha-n}\,d(\mu\otimes\mu)(x,y)+2\int f\,d\mu\]
over all positive Radon measures $\mu$ with $\mu(\mathbb R^n)=1$, concentrated on quite a general (not necessarily closed) $A\subset\mathbb R^n$. We also provide various alternative characterizations of the minimizer $\lambda_{A,f}$, analyze the continuity of both $\lambda_{A,f}$ and the modified Robin constant for monotone families of sets, and give a description of the support of $\lambda_{A,f}$. The significant improvement of the theory in question thereby achieved
is due to a new approach based on the close interaction between the strong and the vague topologies, as well as on
the theory of inner balayage, developed recently by the author.
}}
\symbolfootnote[0]{\quad 2010 Mathematics Subject Classification: Primary 31C15.}
\symbolfootnote[0]{\quad Key words: Minimum Riesz energy problems, external fields, inner capacitary measures, inner balayage.
}

\vspace{6pt}

\markboth{\emph{Natalia Zorii}} {\emph{Minimum Riesz energy problems with external fields}}

\section{Statement of the problem}\label{sec-intr}

\subsection{Basic assumptions and general facts}\label{sec-gen} Fix $n\geqslant2$ and $0<\alpha<n$. The present paper deals with minimum energy problems with respect to the $\alpha$-Riesz kernel $\kappa_\alpha(x,y):=|x-y|^{\alpha-n}$ on $\mathbb R^n$, evaluated in the presence of external fields $f:\mathbb R^n\to[-\infty,\infty]$. (Here $|x-y|$ denotes the Euclidean distance between $x,y\in\mathbb R^n$.)

Denote by $\mathfrak M=\mathfrak M(\mathbb R^n)$ the linear space of all (scalar real-valued Radon) measures $\mu$ on $\mathbb R^n$, equipped with the {\it vague} ({\it $=$\/weak\/$^*$}) topology of pointwise convergence on the class $C_0(\mathbb R^n)$ of all continuous functions $\varphi:\mathbb R^n\to\mathbb R$ of compact support, and by $\mathfrak M^+=\mathfrak M^+(\mathbb R^n)$ the cone of all positive $\mu\in\mathfrak M$, where $\mu$ is {\it positive} if and only if $\mu(\varphi)\geqslant0$ for all positive $\varphi\in C_0(\mathbb R^n)$. When speaking of a (signed) measure $\mu\in\mathfrak M$, we always understand that its $\alpha$-Riesz {\it potential}
\[U^\mu(x):=\int\kappa_\alpha(x,y)\,d\mu(y),\quad x\in\mathbb R^n,\]
is well defined and finite almost everywhere (a.e.)\ with respect to the Lebesgue measure on $\mathbb R^n$; or equivalently, that (cf.\ \cite[Section~I.3.7]{L})
\begin{equation}\label{1.3.10}
\int_{|y|>1}\,\frac{d|\mu|(y)}{|y|^{n-\alpha}}<\infty,
\end{equation}
where $|\mu|:=\mu^++\mu^-$, $\mu^+$ and $\mu^-$ being the positive and negative parts of $\mu$ in the Hahn--Jor\-dan decomposition. (This would necessarily hold if $\mu$ were assumed to be {\it bounded}, that is, with $|\mu|(\mathbb R^n)<\infty$.)\ Actually, then (and only then) $U^\mu$ is finite {\it quasi-everywhere} ({\it q.e.})\ on $\mathbb R^n$, namely, everywhere except for a set of zero outer capacity, cf.\ \cite[Section~III.1.1]{L}. (Regarding the concepts of {\it outer} and {\it inner} $\alpha$-Riesz capacities, denoted by $c^*(\cdot)$ and $c_*(\cdot)$, respectively, see \cite[Section~II.2.6]{L}.)

Yet another assumption sufficient for $U^\mu$ to be finite q.e.\ on $\mathbb R^n$ is that $\mu$ be of finite $\alpha$-Riesz {\it energy} (see \cite[Corollary to Lemma~3.2.3]{F1}), i.e.
\[I(\mu):=\int\kappa_\alpha(x,y)\,d(\mu\otimes\mu)(x,y)<\infty.\]

A basic fact to be permanently used in what follows is that the $\alpha$-Riesz kernel is {\it strictly positive definite}, which means that the energy of any (signed) $\mu\in\mathfrak M$ is ${}\geqslant0$ (whenever defined), and it is zero only for $\mu=0$, see \cite[Theorem~1.15]{L}. Then all $\mu\in\mathfrak M$ with $I(\mu)<\infty$ form a pre-Hil\-bert space $\mathcal E=\mathcal E(\mathbb R^n)$ with the inner product
\[\langle\mu,\nu\rangle:=I(\mu,\nu):=\int\kappa_\alpha(x,y)\,d(\mu\otimes\nu)(x,y)\]
and the energy norm $\|\mu\|:=\sqrt{I(\mu)}$, cf.\ \cite[Section~3.1]{F1}. The (Hausdorff) topology on $\mathcal E$ determined by means of this norm $\|\cdot\|$ is said to be {\it strong}.

Another fact crucial to our study is that the cone $\mathcal E^+=\mathcal E^+(\mathbb R^n):=\mathcal E\cap\mathfrak M^+(\mathbb R^n)$ is {\it complete} in the induced strong topology, and that the strong topology on $\mathcal E^+$ is {\it finer} than the vague topology on $\mathcal E^+$ (see \cite{Ca1,D1,De2}, cf.\ \cite[Section~I.4.13]{L}). Thus any strong Cauchy sequence (net) $(\mu_j)\subset\mathcal E^+$ converges {\it both strongly and vaguely} to the same (unique) $\mu_0\in\mathcal E^+$. The $\alpha$-Riesz kernel is, therefore, {\it perfect} \cite[Section~3.3]{F1}.\footnote{It is worth noting here that according to a well-known counterexample by H.~Cartan \cite{Ca1}, the whole pre-Hilbert space $\mathcal E$ is strongly {\it incomplete} (cf.\ \cite[Theorem~1.19]{L}).}

\subsection{Statement of the problem} For any $A\subset\mathbb R^n$, let $\mathfrak M^+(A)$ stand for the class of all $\mu\in\mathfrak M^+$ {\it concentrated on} $A$, which means that $A^c:=\mathbb R^n\setminus A$ is $\mu$-negligible, or equivalently, that $A$ is $\mu$-measurable and $\mu=\mu|_A$, $\mu|_A:=1_A\cdot\mu$ being the {\it trace} of $\mu$ to $A$, cf.\ \cite[Section~V.5.7]{B2}. (Note that for $\mu\in\mathfrak M^+(A)$, the indicator function $1_A$ of $A$ is locally $\mu$-int\-egr\-able.) We also denote
\begin{gather*}
\breve{\mathfrak M}^+(A):=\bigl\{\mu\in\mathfrak M^+(A):\ \mu(\mathbb R^n)=1\bigr\},\\
\mathcal E^+(A):=\mathcal E\cap\mathfrak M^+(A),\quad\breve{\mathcal E}^+(A):=\mathcal E\cap\breve{\mathfrak M}^+(A).
\end{gather*}

Given $A\subset\mathbb R^n$, fix a universally measurable function $f:\overline{A}\to[-\infty,\infty]$, to be treated as an external field acting on charges (measures) carried by $\overline{A}$ $\big({}:={\rm Cl}_{\mathbb R^n}A\bigr)$, and let $\mathcal E^+_f(A)$ stand for the class of all $\mu\in\mathcal E^+(A)$ such that $f$ is $\mu$-in\-te\-g\-r\-ab\-le \cite{B2} (Chapter~IV, Sections~3, 4).
Then {\it the Gauss functional} ({\it $=$\/the $f$-wei\-gh\-t\-ed energy})
\begin{equation*}\label{fen}
 I_f(\mu):=I(\mu)+2\int f\,d\mu
\end{equation*}
is finite for all $\mu\in\mathcal E^+_f(A)$, and hence one can introduce the extremal value\footnote{As usual, the infimum over the empty set is interpreted as $+\infty$. We also agree that $1/(+\infty)=0$ and $1/0 = +\infty$.}
\begin{equation*}\label{wf}
w_f(A):=\inf_{\mu\in\breve{\mathcal E}^+_f(A)}\,I_f(\mu)\in[-\infty,\infty],
\end{equation*}
where
\[\breve{\mathcal E}^+_f(A):=\mathcal E^+_f(A)\cap\breve{\mathfrak M}^+(A).\]

$\bullet$ Throughout the present paper, we always assume that
\begin{equation}\label{fin}
-\infty<w_f(A)<\infty.
\end{equation}
(See Lemma~\ref{l-aux-3} below
for necessary and sufficient conditions for (\ref{fin}) to occur.) Then the class $\breve{\mathcal E}^+_f(A)$ is nonempty, and hence the following problem makes sense.

\begin{problem}\label{pr-main} Does there exist $\lambda_{A,f}\in\breve{\mathcal E}^+_f(A)$ with $I_f(\lambda_{A,f})=w_f(A)$?\,\footnote{If the {\it unweighted} case $f=0$ takes place, then we drop the index $f$, and write $w(A)$, $\lambda_A$, etc.\ in place of $w_f(A)$ and $\lambda_{A,f}$, respectively. (Note that, actually, $c_*(A)=1/w(A)$.)}
\end{problem}

This problem, originated by C.F.~Gauss \cite{Gau} for charges on the boundary surface of a bounded domain in $\mathbb R^3$,
is sometimes referred to as {\it the inner Gauss variational problem} \cite{O}. Recent results on this topic are reviewed in the monographs \cite{BHS,ST} (see also numerous references therein); for the latest works on Problem~\ref{pr-main}, see \cite{CSW,Dr0}.

Unlike the previous studies on Problem~\ref{pr-main}, in the present paper we do not require an external field $f$ to be necessarily lower semicontinuous, and a set $A$ to be necessarily closed. The $f$-we\-i\-g\-h\-t\-ed energy $I_f(\mu)$ is, therefore, no longer vaguely lower semicontinuous, and the class $\mathfrak M^+(A)$ is no longer vaguely closed. To overcome these additional difficulties, we initiate a new approach based on the close interaction between the strong and the vague topologies on the (strongly complete) cone $\mathcal E^+(\mathbb R^n)$.

In the particular case where $0<\alpha\leqslant2$, the analysis performed below is also substantially based on the theory of {\it inner} $\alpha$-Riesz balayage, established in the author's recent papers \cite{Z-bal}--\cite{Z-arx}. (Such a theory generalizes the theory of inner Newtonian balayage, originated in the pioneering work by Cartan \cite{Ca2}. For the theory of {\it outer} $\alpha$-Riesz balayage, see the monographs \cite{BH,Doob}, the latter dealing with $\alpha=2$.)

The approach thereby developed enables us to obtain necessary and/or sufficient conditions for the solvability of Problem~\ref{pr-main} for {\it noncompact} (and {\it even nonclosed\/}) sets $A\subset\mathbb R^n$ and for quite general ({\it not necessarily lower semicontinuous}) external fields $f$ (see Theorems~\ref{th2'}, \ref{th3'}, \ref{th-nonth}, \ref{th-th} and Corollaries~\ref{qcomp}, \ref{cor2}).

Furthermore, we provide a number of alternative characterizations of the solution $\lambda_{A,f}$ to Problem~\ref{pr-main} (see Theorems~\ref{th-ch2} and \ref{th3'}), and we analyze the continuity of $\lambda_{A,f}$ as well as that of the so-cal\-led inner modified Robin constant for monotone families of sets (see Theorems~\ref{th4'}--\ref{th4'c2}, \ref{conv6}, and \ref{conv7}).

Finally, we establish a complete description of the support of $\lambda_{A,f}$ (Theorems~\ref{th-desc} and \ref{th-desc2}),
thereby giving an answer to Open question~2.1 by M.~Ohtsuka \cite[p.~284]{O}.

These improve significantly recent results on the topic in question, see Section~\ref{sec-remark} for details. New phenomena thereby discovered are illustrated by means of examples.

\subsection{General assumptions on external fields. Preliminary results on Problem~\ref{pr-main}}\label{subs-prel} For closed $F\subset\mathbb R^n$, we denote by $\Phi(F)$ the class of all lower semicontinuous (l.s.c.)\ functions $g:F\to(-\infty,\infty]$ that
are ${}\geqslant0$ unless $F$ is compact.

$\bullet$ Given {\it arbitrary} $A\subset\mathbb R^n$, in all that follows we assume $f$ to be of the form(\footnote{In the previous researches on Problem~\ref{pr-main} (see e.g.\ \cite{BHS,CSW,Dr0,ST} and references therein), an external field was always defined to be a lower semicontinuous function $\psi$, whereas the presence of an al\-t\-er\-n\-a\-t\-ive/ad\-d\-i\-t\-io\-nal source of energy, generated by {\it signed\/} charges $\vartheta\in\mathcal E(\mathbb R^n)$ and/or $\omega\in\mathfrak M(\mathbb R^n)$, cf.\ (\ref{f1}), agrees well with the original electrostatic nature of the Gauss variational problem.})(\footnote{Each of the summands in (\ref{f1}) might be $0$. (For $\omega=0$, we have $S_\omega=\varnothing$, hence $d(S_\omega,A)=+\infty$.)})
\begin{equation}\label{f1}
 f=\psi+U^\vartheta+U^\omega,
\end{equation}
where $\psi$, $\vartheta$, and $\omega$ have the following properties:
\begin{itemize}
\item[$(\mathcal P_1)$] $\psi\in\Phi(\overline{A})$.
\item[$(\mathcal P_2)$] $\vartheta\in\mathcal E(\mathbb R^n)$.
\item[$(\mathcal P_3)$] $\omega\in\mathfrak M(\mathbb R^n)$ {\it is bounded and such that
  \begin{equation*}\label{d}
    d(S_\omega,A):=\inf_{x\in S_\omega, \ y\in A}\,|x-y|>0,
  \end{equation*}
$S_\omega=S(\omega)$ being the support of\/ $\omega$.}
\end{itemize}

Such an external field $f$ is well defined as a finite number or $\pm\infty$ q.e.\ on $\overline{A}$, for so are all the summands in (\ref{f1}), see Section~\ref{sec-gen}. (Here we have used the countable subadditivity of outer capacity on arbitrary sets in $\mathbb R^n$ \cite[Section~II.2.6]{L}.)

Similarly, {\it the $f$-wei\-g\-h\-t\-ed potential}
\[U^\mu_f:=U^\mu+f,\quad\mu\in\mathfrak M,\]
is well defined as a finite number or $\pm\infty$ q.e.\ on $\overline{A}$~---  provided, of course, that the measure $\mu$ meets (\ref{1.3.10}) (thus in particular if $\mu$ is bounded or of finite energy).

\begin{lemma}\label{int}
Given an external field $f$ of form {\rm(\ref{f1})}, $\int f\,d\mu$ as well as $I_f(\mu)$ is well defined as a finite number or $+\infty$
for all bounded $\mu\in\mathcal E^+(\overline{A})$.
\end{lemma}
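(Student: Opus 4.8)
The plan is to decompose $f$ according to (\ref{f1}) and to show that $f$ is bounded from below on $\overline A$ by a $\mu$-integrable function; this makes $\int f\,d\mu$ well defined in $(-\infty,+\infty]$, and since $I(\mu)<\infty$ for $\mu\in\mathcal E^+(\overline A)$, the same then follows for $I_f(\mu)=I(\mu)+2\int f\,d\mu$. First I would dispose of $\psi$ and $U^\omega$, which are bounded below on $\overline A$ by finite constants. Indeed, by $(\mathcal P_1)$ either $\psi\geqslant0$ (when $\overline A$ is noncompact) or $\psi$ is l.s.c.\ on the compact set $\overline A$ and hence attains a finite minimum; in either case $\psi\geqslant c_1$ for some $c_1\in\mathbb R$. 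As for $U^\omega$, write $R:=d(S_\omega,A)>0$; since $S_\omega$ is closed, every $z\in\overline A$ satisfies $|z-y|\geqslant R$ for all $y\in S_\omega$, so that $d(S_\omega,\overline A)\geqslant R$. Because $\alpha-n<0$ and $\omega$ is bounded, this yields $|U^\omega(z)|\leqslant R^{\alpha-n}|\omega|(\mathbb R^n)=:c_2<\infty$ for every $z\in\overline A$.

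The main point is the term $U^\vartheta$. I would use $(\mathcal P_2)$ together with the standard fact that $\vartheta\in\mathcal E$ entails $\vartheta^+,\vartheta^-\in\mathcal E^+$, and write $U^\vartheta=U^{\vartheta^+}-U^{\vartheta^-}\geqslant-U^{\vartheta^-}$ q.e.\ on $\overline A$. For the positive measures $\vartheta^-$ and $\mu$, Tonelli's theorem gives the identity $\int U^{\vartheta^-}\,d\mu=I(\vartheta^-,\mu)$ in $[0,+\infty]$, and the Cauchy--Schwarz inequality in the pre-Hilbert space $\mathcal E$ shows this to be finite:
\[\int U^{\vartheta^-}\,d\mu=\langle\vartheta^-,\mu\rangle\leqslant\|\vartheta^-\|\,\|\mu\|<\infty,\]
the finiteness of $\|\mu\|$ being exactly the hypothesis $\mu\in\mathcal E^+(\overline A)$. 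This is the step where the finite-energy assumptions on both $\vartheta$ and $\mu$, rather than mere boundedness, are indispensable, and verifying the Tonelli identity and the membership $\vartheta^\pm\in\mathcal E^+$ is the only genuinely delicate ingredient.

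Combining the three estimates, q.e.\ on $\overline A$~--- hence $\mu$-a.e., since a measure of finite energy does not charge sets of zero capacity~--- one has $f\geqslant c_1-c_2-U^{\vartheta^-}$, whence $f^-\leqslant U^{\vartheta^-}+\bigl(|c_1|+|c_2|\bigr)$. Integrating and using $\int U^{\vartheta^-}\,d\mu<\infty$ together with $\mu(\mathbb R^n)<\infty$ gives $\int f^-\,d\mu<\infty$. Consequently $\int f\,d\mu=\int f^+\,d\mu-\int f^-\,d\mu$ is well defined as a finite number or $+\infty$ (note that $\int\psi\,d\mu$, and hence $\int f\,d\mu$, may legitimately equal $+\infty$). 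Adding the finite, nonnegative quantity $I(\mu)$ finally shows that $I_f(\mu)$ is well defined as a finite number or $+\infty$, which completes the argument.
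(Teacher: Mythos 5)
Your proposal is correct and follows essentially the same route as the paper: $\psi$ is handled by lower boundedness (adding a constant in the compact case), the $U^\vartheta$ term by the finiteness of the mutual energy $\langle\vartheta,\mu\rangle$ via Cauchy--Schwarz in $\mathcal E$, and the $U^\omega$ term by the distance condition $d(S_\omega,A)>0$ together with the boundedness of $\omega$ and $\mu$. The only cosmetic differences are that you bound $U^\omega$ pointwise on $\overline{A}$ where the paper estimates $\int U^{\omega^-}\,d\mu$ via Fubini, and that the paper records explicitly (as you tacitly assume) that $f$ is $\mu$-measurable because each summand is.
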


\begin{proof}
Observe that $f$ is $\mu$-measurable for all $\mu\in\mathfrak M^+(\overline{A})$, for so are all the summands $\psi$, $U^\vartheta$, and $U^\omega$ in (\ref{f1}). We further note that the inequality $\int\psi\,d\mu>-\infty$ is obvious whenever $\psi\geqslant0$, while the remaining case of compact $\overline{A}$ is treated by replacing $\psi$ by $\psi':=\psi+c\geqslant0$ on $\overline{A}$, where $c\in(0,\infty)$, which is always possible since a lower semicontinuous function on a compact set is lower bounded.

As $\int U^\vartheta\,d\mu$ is certainly finite for all $\mu\in\mathcal E$, it remains to verify that
\[\int U^{\omega^-}\,d\mu<\infty\text{ \ for all bounded $\mu\in\mathfrak M^+(\overline{A})$},\]
which however follows immediately from the estimates (cf.\ $(\mathcal P_3)$)
\begin{equation}\label{estme}
\int|x-y|^{\alpha-n}\,d(\omega^-\otimes\mu)(x,y)\leqslant d(S_\omega,A)^{\alpha-n}\omega^-(\mathbb R^n)\mu(\mathbb R^n)<\infty
\end{equation}
by making use of Lebesgue--Fubini's theorem \cite[Section~V.8, Theorem~1(a)]{B2}.
\end{proof}

Lemma~\ref{int} enables us to deduce from our earlier paper \cite{Z5a} the following two theorems on Problem~\ref{pr-main}. (In view of their important role in the proofs of the main results of the present study, we provide here their explicit formulations.)

\begin{theorem}[{\rm cf.\ \cite[Theorems~1, 2]{Z5a}}]\label{th-ch1}
If the solution $\lambda=\lambda_{A,f}$ to Problem~{\rm\ref{pr-main}} exists,\footnote{The solution $\lambda_{A,f}$ to Problem~\ref{pr-main} is {\it unique} (if it exists), which follows easily from the convexity of the class $\breve{\mathcal E}^+_f(A)$ by use of the parallelogram identity in the pre-Hil\-bert space $\mathcal E$ (cf.\ \cite[Lemma~6]{Z5a}). This $\lambda_{A,f}$ will also be referred to as {\it the inner $f$-weighted equilibrium measure}.} then its $f$-wei\-ghted potential $U_f^{\lambda}$ has the properties\footnote{The abbreviation {\it n.e.}\ ({\it nearly everywhere}) means, as usual, that the set of all $x\in A$ where the inequality fails is of zero {\it inner} capacity. Compare with the concept of quasi-everywhere, where an exceptional set must be of zero {\it outer} capacity (see Section~\ref{sec-gen}).}
\begin{align}
  U_f^{\lambda}&\geqslant c_{A,f}\text{ \ n.e.\ on $A$},\label{t1-pr1}\\
  U_f^{\lambda}&=c_{A,f}\text{ \ $\lambda$-a.e.\ on $A$},\label{t1-pr2}
\end{align}
where
\begin{equation}\label{cc}
c_{A,f}:=\int U_f^{\lambda}\,d\lambda=w_f(A)-\int f\,d\lambda\in(-\infty,\infty)
\end{equation}
is said to be the inner $f$-weighted equilibrium constant.\footnote{Similarly to \cite[p.~27]{ST}, $c_{A,f}$ is also said to be  {\it the inner modified Robin constant}.}
If moreover $f$ is l.s.c.\ on $\overline{A}$, then also
\begin{equation*}\label{old}U_f^{\lambda}\leqslant c_{A,f}\text{ \ on $S(\lambda)$},\end{equation*}
and hence
\[U_f^{\lambda}=c_{A,f}\text{ \ n.e.\ on $A\cap S(\lambda)$}.\]
\end{theorem}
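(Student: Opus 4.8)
The plan is to convert the global minimality of $\lambda$ into a first-order variational inequality and then to extract the pointwise assertions from it. First I would fix an arbitrary competitor $\mu\in\breve{\mathcal E}^+_f(A)$ and perturb along the segment $\lambda_t:=(1-t)\lambda+t\mu$, which again lies in $\breve{\mathcal E}^+_f(A)$ by convexity of that class. Since $I(\lambda)$, $I(\mu)$, $\langle\lambda,\mu\rangle$, $\int f\,d\lambda$ and $\int f\,d\mu$ are all finite, expanding the energy norm gives
\[I_f(\lambda_t)-I_f(\lambda)=2t\Bigl(\langle\lambda,\mu-\lambda\rangle+\int f\,d(\mu-\lambda)\Bigr)+t^2\,\|\mu-\lambda\|^2,\]
whose left-hand side is $\geqslant0$ by minimality of $\lambda$. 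Dividing by $t>0$ and letting $t\downarrow0$ annihilates the quadratic term and, after writing $\langle\lambda,\mu\rangle=\int U^\lambda\,d\mu$ and $U_f^\lambda=U^\lambda+f$, yields the key inequality $\int U_f^\lambda\,d\mu\geqslant\int U_f^\lambda\,d\lambda$ for every $\mu\in\breve{\mathcal E}^+_f(A)$. Choosing $\mu=\lambda$ identifies the right-hand side as $c_{A,f}:=\int U_f^\lambda\,d\lambda$, and the two expressions for $c_{A,f}$ in (\ref{cc}), together with its finiteness, follow at once from $w_f(A)=I_f(\lambda)$ and the finiteness of $I(\lambda)$ and $\int f\,d\lambda$.

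I would then read off the pointwise statements. For (\ref{t1-pr1}) I argue by contradiction: if $E:=\{x\in A:\ U_f^\lambda(x)<c_{A,f}\}$ had $c_*(E)>0$, I would choose a compact $K\subset E$ with $0<c_*(K)<\infty$ and let $\nu$ be the normalized capacitary measure of $K$, so that $\nu$ has finite energy, unit mass, and is concentrated on $K\subset A$. On $K$ the identity $f=U_f^\lambda-U^\lambda$ and the sign $U^\lambda\geqslant0$ give $f\leqslant U_f^\lambda<c_{A,f}$, so $\int f\,d\nu<\infty$, while $\int f\,d\nu>-\infty$ by Lemma~\ref{int}; hence $\nu\in\breve{\mathcal E}^+_f(A)$ is admissible, and integrating $U_f^\lambda<c_{A,f}$ against $\nu$ contradicts the key inequality. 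Thus $c_*(E)=0$, which is (\ref{t1-pr1}). Because a measure of finite energy charges no set of zero inner capacity, this bound in fact holds $\lambda$-a.e.; combined with $\int U_f^\lambda\,d\lambda=c_{A,f}=c_{A,f}\,\lambda(\mathbb R^n)$ it makes the $\lambda$-a.e.\ nonnegative integrand $U_f^\lambda-c_{A,f}$ have vanishing integral, forcing $U_f^\lambda=c_{A,f}$ $\lambda$-a.e.\ on $A$, i.e.\ (\ref{t1-pr2}). For the final refinement, if $f$ is l.s.c.\ on $\overline{A}$ then so is $U_f^\lambda=U^\lambda+f$; were $U_f^\lambda(x_0)>c_{A,f}$ at some $x_0\in S(\lambda)$, lower semicontinuity would produce a neighbourhood of positive $\lambda$-measure on which $U_f^\lambda>c_{A,f}$, contradicting the $\lambda$-a.e.\ equality, whence $U_f^\lambda\leqslant c_{A,f}$ on $S(\lambda)$ and therefore $U_f^\lambda=c_{A,f}$ n.e.\ on $A\cap S(\lambda)$.

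The step I expect to be the main obstacle is verifying that the test measure $\nu$ truly belongs to $\breve{\mathcal E}^+_f(A)$, that is, securing the $\nu$-integrability of the possibly very irregular field $f$. This is exactly where the structural decomposition (\ref{f1}) and Lemma~\ref{int} are indispensable: the latter guarantees $\int f\,d\nu>-\infty$ for every bounded finite-energy measure on $\overline{A}$, while the complementary bound $\int f\,d\nu<\infty$ must be squeezed out of the defining inequality $U_f^\lambda<c_{A,f}$ on $K$ together with the positivity of $U^\lambda$. A secondary technical point, which I would invoke rather than reprove, is the classical fact that measures of finite energy vanish on sets of zero inner capacity, used to upgrade the n.e.\ lower bound to a $\lambda$-a.e.\ one.
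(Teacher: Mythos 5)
Your argument is correct and is precisely the classical first-order perturbation argument underlying this theorem: the paper itself does not reprove the statement but imports it from \cite[Theorems~1, 2]{Z5a} (using Lemma~\ref{int} only to guarantee that the Gauss functional is well defined on the competitors), and the proof there is the same convexity/variational reasoning you give. The one delicate point --- admissibility of the test measure $\nu$ in the proof of (\ref{t1-pr1}) --- you handle correctly by combining the lower bound $\int f\,d\nu>-\infty$ from Lemma~\ref{int} with the upper bound coming from $f\leqslant U_f^{\lambda}<c_{A,f}$ on $K$, and the upgrade from n.e.\ to $\lambda$-a.e.\ is justified by Lemma~\ref{l-negl} since every measure of finite energy is $C$-absolutely continuous.
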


Relations (\ref{t1-pr1}) and (\ref{cc}), resp.\ (\ref{t1-pr2}) and (\ref{cc}), characterize the minimizer $\lambda_{A,f}$ uniquely within $\breve{\mathcal E}^+_f(A)$. In more detail, the following theorem holds true.

\begin{theorem}[{\rm cf.\ \cite[Proposition~1]{Z5a}}]\label{th-ch2}For $\mu\in\breve{\mathcal E}^+_f(A)$ to be the {\rm(}unique{\rm)} solution $\lambda_{A,f}$ to Problem~{\rm\ref{pr-main}}, it is necessary and sufficient that either of the following two characteristic inequalities be fulfilled:
\begin{gather}U_f^\mu\geqslant\int U_f^\mu\,d\mu=:c_1\text{ n.e.\ on $A$},\label{1}\\
U_f^\mu\leqslant w_f(A)-\int f\,d\mu=:c_2\text{ \ $\mu$-a.e.\ on $A$.}\label{2}
\end{gather}
If either of {\rm(\ref{1})} or {\rm(\ref{2})} holds, then equality actually prevails in {\rm(\ref{2})}, and moreover
\[c_1=c_2=c_{A,f},\] $c_{A,f}$ being the inner $f$-weighted equilibrium constant.
\end{theorem}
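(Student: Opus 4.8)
The plan is to reduce both characterizations to a single variational inequality obtained by perturbing a candidate $\mu$ within the convex class $\breve{\mathcal E}^+_f(A)$, and then to translate that inequality into the pointwise relations (\ref{1}) and (\ref{2}). First I would record that $\breve{\mathcal E}^+_f(A)$ is convex: for $\mu,\nu$ in this class and $t\in[0,1]$, the measure $\mu_t:=(1-t)\mu+t\nu$ again has unit mass, is concentrated on $A$, has finite energy (by the triangle inequality for $\|\cdot\|$), and satisfies $\int f\,d\mu_t=(1-t)\int f\,d\mu+t\int f\,d\nu<\infty$, so $f$ is $\mu_t$-integrable. Expanding $I_f(\mu_t)=\|\mu_t\|^2+2\int f\,d\mu_t$ and using $\langle\mu,\nu\rangle+\int f\,d\nu=\int U_f^\mu\,d\nu$ (and likewise with $\nu$ replaced by $\mu$) gives
\[I_f(\mu_t)=I_f(\mu)+2t\Bigl[\int U_f^\mu\,d\nu-\int U_f^\mu\,d\mu\Bigr]+t^2\|\nu-\mu\|^2.\]
Since $\|\nu-\mu\|^2\geqslant0$, inspecting the sign of the linear term as $t\to0^+$ shows that $\mu$ minimizes $I_f$ over $\breve{\mathcal E}^+_f(A)$ if and only if
\[\int U_f^\mu\,d\nu\geqslant\int U_f^\mu\,d\mu=c_1\quad\text{for every }\nu\in\breve{\mathcal E}^+_f(A).\]

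Next I would show this global inequality is equivalent to the n.e.\ bound (\ref{1}). For the implication from (\ref{1}) to the integral inequality, I would invoke the standard fact that a measure of finite energy does not charge sets of inner capacity zero; hence $U_f^\mu\geqslant c_1$ n.e.\ on $A$ forces $U_f^\mu\geqslant c_1$ $\nu$-a.e.\ for every $\nu\in\breve{\mathcal E}^+_f(A)$ (each such $\nu$ being concentrated on $A$ and of finite energy), and integrating yields $\int U_f^\mu\,d\nu\geqslant c_1$. For the converse I would argue by contraposition: if the set $E:=\{x\in A:U_f^\mu(x)<c_1\}$ had positive inner capacity, then, using inner regularity of capacity together with the q.e.\ finiteness of $U^\mu$ for $\mu\in\mathcal E$ and the $\mu$-measurability of $f$, I could select a compact $K\subset E$ of positive capacity on which $U^\mu$ and $f$ are bounded and $U_f^\mu\leqslant c_1-\varepsilon$; the normalized capacitary (equilibrium) measure $\nu_K$ of $K$ then lies in $\breve{\mathcal E}^+_f(A)$ and gives $\int U_f^\mu\,d\nu_K\leqslant c_1-\varepsilon<c_1$, a contradiction. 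Together with the first paragraph this proves $\mu=\lambda_{A,f}\iff(\ref{1})$.

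It remains to bring in (\ref{2}). Its sufficiency is immediate: integrating $U_f^\mu\leqslant c_2$ against $\mu$ gives $I(\mu)+\int f\,d\mu\leqslant c_2=w_f(A)-\int f\,d\mu$, i.e.\ $I_f(\mu)\leqslant w_f(A)$, whence $I_f(\mu)=w_f(A)$ and $\mu=\lambda_{A,f}$. Conversely, once $\mu=\lambda_{A,f}$, the identity $c_1=I(\mu)+\int f\,d\mu=w_f(A)-\int f\,d\mu=c_2$ shows $c_1=c_2$, and this common value equals $c_{A,f}$ by (\ref{cc}); moreover (\ref{1}) gives $U_f^\mu\geqslant c_1$ $\mu$-a.e.\ (again since $\mu$ does not charge inner-capacity-zero sets), while $\int U_f^\mu\,d\mu=c_1$ forces $U_f^\mu=c_1=c_2$ $\mu$-a.e., which is (\ref{2}) with equality prevailing. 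Since each of (\ref{1}) and (\ref{2}) has been shown equivalent to $\mu=\lambda_{A,f}$, the assertion $c_1=c_2=c_{A,f}$ holds whenever either is satisfied.

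The main obstacle is the converse half of the translation in the second paragraph: producing an admissible competitor $\nu_K$ that simultaneously has finite energy, unit mass, is concentrated on $A$, and renders $f$ integrable. The delicate point is the integrability of $f=\psi+U^\vartheta+U^\omega$ against the test measure, which I would secure by first passing to a compact subset of $E$ on which all three summands are bounded --- exploiting $(\mathcal P_1)$--$(\mathcal P_3)$ and the q.e.\ finiteness of the potentials --- before forming the capacitary measure. Everything else is either a direct pre-Hilbert-space computation or a routine application of the fact that finite-energy measures ignore sets of inner capacity zero.
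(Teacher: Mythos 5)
Your argument is correct, but note that the paper itself does not prove Theorem~\ref{th-ch2}: it is imported from the author's earlier work (\cite[Proposition~1]{Z5a}), with Lemma~\ref{int} serving only to guarantee that $I_f$ is well defined on $\breve{\mathcal E}^+_f(A)$ so that the quoted result applies. Your self-contained variational proof --- expanding $I_f((1-t)\mu+t\nu)$ in the pre-Hilbert space $\mathcal E$, reading off the first-order condition $\int U_f^\mu\,d\nu\geqslant c_1$ for all competitors $\nu$, and translating it into the nearly-everywhere inequality (\ref{1}) --- is the standard route and is sound. The two delicate points you flag are exactly the ones needing care, and both are covered by machinery already present in the paper: the passage from ``n.e.\ on $A$'' to ``$\nu$-a.e.'' is precisely Lemma~\ref{l-negl} (finite-energy measures are $C$-absolutely continuous, and the exceptional set $\{x\in A:\ U_f^\mu(x)<c_1\}$ is $\nu$-measurable because $U^\mu$ is l.s.c., $f$ is universally measurable, and $A$ is $\nu$-measurable for $\nu$ concentrated on $A$); and the construction of the competitor $\nu_K$ in the contrapositive step requires the strengthened countable subadditivity of inner capacity (footnote~\ref{foot-sub}) to locate, inside $\{U_f^\mu\leqslant c_1-\varepsilon\}\cap A$, a compact $K$ of positive capacity on which $U^\mu$ and $U^{\vartheta^-}$ (hence also $\psi$, by the upper bound it inherits from $U_f^\mu\leqslant c_1-\varepsilon$) are bounded, so that $f$ is $\nu_K$-integrable. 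With those two references supplied, the equivalence with (\ref{1}), the sufficiency of (\ref{2}) by integration against $\mu$, and the identities $c_1=c_2=c_{A,f}$ via (\ref{cc}) all go through as you describe. What your approach buys is independence from \cite{Z5a}; what it costs is having to redo, for general external fields of form (\ref{f1}), the measurability and integrability checks that the citation otherwise absorbs.
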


\subsection{When does (\ref{fin}) hold?} For (\ref{fin}) to be satisfied, it is necessary that $c_*(A)>0$, since otherwise the class $\mathcal E^+(A)$ would reduce to $\{0\}$ (see e.g.\ \cite[Lemma~2.3.1]{F1}). Actually, the following stronger assertion holds true.

\begin{lemma}\label{l-aux-3} For {\rm(\ref{fin})} to hold, it is necessary and sufficient that
\begin{equation}\label{iv;}
c_*\bigl(\{x\in A:\ \psi(x)<\infty\}\bigr)>0,
\end{equation}
$\psi$ being the first summand in representation {\rm(\ref{f1})}.
\end{lemma}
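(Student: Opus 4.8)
The plan is to reduce (\ref{fin}) to a single nonemptiness statement and then to show that this nonemptiness is governed precisely by (\ref{iv;}). First I would record that, under $(\mathcal P_1)$--$(\mathcal P_3)$, the lower bound $w_f(A)>-\infty$ holds \emph{automatically}, so that (\ref{fin}) becomes equivalent to $w_f(A)<\infty$, i.e.\ to $\breve{\mathcal E}^+_f(A)\ne\varnothing$. Indeed, if the latter class is nonempty, then $I_f$ is finite on it by Lemma~\ref{int}, whence $w_f(A)<\infty$; and an empty class gives $w_f(A)=+\infty$ by convention. To secure the lower bound, fix any $\mu\in\breve{\mathcal E}^+_f(A)$ and split $I_f(\mu)=\bigl(I(\mu)+2\langle\mu,\vartheta\rangle\bigr)+2\int\psi\,d\mu+2\int U^\omega\,d\mu$. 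Completing the square in the pre-Hilbert space $\mathcal E$ gives $I(\mu)+2\langle\mu,\vartheta\rangle=\|\mu+\vartheta\|^2-\|\vartheta\|^2\geqslant-I(\vartheta)$; the field $\psi$ is bounded below on $\overline A$ (it is ${}\geqslant0$ unless $\overline A$ is compact, in which case lower semicontinuity on a compact set yields a finite lower bound), so $\int\psi\,d\mu\geqslant-a$ for some $a\in[0,\infty)$; and $\int U^\omega\,d\mu\geqslant-\int U^{\omega^-}\,d\mu\geqslant-d(S_\omega,A)^{\alpha-n}\omega^-(\mathbb R^n)$ by the estimate (\ref{estme}) together with $\mu(\mathbb R^n)=1$. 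Adding these yields a finite constant independent of $\mu$, so $w_f(A)>-\infty$.

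It then remains to prove $\breve{\mathcal E}^+_f(A)\ne\varnothing\iff c_*(A_\psi)>0$, where $A_\psi:=\{x\in A:\psi(x)<\infty\}$. For sufficiency, assume $c_*(A_\psi)>0$ and choose a compact $K\subset A_\psi$ with $c_*(K)>0$; then $\mathcal E^+(K)\ne\{0\}$, so normalizing any nonzero element gives a probability measure $\mu_K\in\mathcal E^+(K)$. The obstacle here is that $\psi$, though finite on $K$, may be unbounded above, so $\int\psi\,d\mu_K$ need not be finite. I would circumvent this without invoking continuity of capacity: since $\psi<\infty$ on $K$ one has $\mu_K(\{\psi=\infty\})=0$, hence $\mu_K\bigl(K\cap\{\psi\leqslant m\}\bigr)>0$ for some $m$ by countable additivity. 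Restricting $\mu_K$ to $K\cap\{\psi\leqslant m\}$ and renormalizing produces $\mu\in\breve{\mathcal E}^+(A)$ with $\int\psi\,d\mu\leqslant m<\infty$; since $\int U^\vartheta\,d\mu=\langle\mu,\vartheta\rangle$ and $\int U^\omega\,d\mu$ are finite as well, Lemma~\ref{int} gives $\mu\in\breve{\mathcal E}^+_f(A)$.

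For necessity, assume $c_*(A_\psi)=0$ and suppose, seeking a contradiction, that some $\mu\in\breve{\mathcal E}^+_f(A)$ exists. Integrability of $f$, hence of $\psi$, against $\mu$, combined with $\psi$ being bounded below, forces $\mu(\{\psi=\infty\})=0$, so the trace $\nu:=\mu|_{\{\psi<\infty\}}$ satisfies $\nu(\mathbb R^n)=1$ and $I(\nu)\leqslant I(\mu)<\infty$. Because $\mu$ is concentrated on $A$ and $\nu$ is concentrated on $\{\psi<\infty\}$, the measure $\nu$ is concentrated on $A\cap\{\psi<\infty\}=A_\psi$; thus $\nu$ is a nonzero element of $\mathcal E^+(A_\psi)$. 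But $c_*(A_\psi)=0$ forces $\mathcal E^+(A_\psi)=\{0\}$ by \cite[Lemma~2.3.1]{F1}, a contradiction. Hence $\breve{\mathcal E}^+_f(A)=\varnothing$ and $w_f(A)=+\infty$, so (\ref{fin}) fails.

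I expect the necessity direction to be the main obstacle, chiefly the measurability bookkeeping needed to present $\nu$ as a genuine nonzero measure of finite energy \emph{concentrated on} $A_\psi$, given that $A$ itself need not be Borel. The recurring technical ingredient is that sets of zero inner capacity carry no nonzero measure of finite energy, which is exactly what collapses the whole question onto the single set $A_\psi$ on which the first summand $\psi$ stays finite.
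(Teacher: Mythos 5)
Your proof is correct, and it reaches the conclusion by a partly different route than the paper. The half showing $w_f(A)>-\infty$ coincides with the paper's argument: completing the square in $\mathcal E$ for the $\vartheta$-term, lower boundedness of $\psi$ (trivial when $\psi\geqslant0$, obtained by adding a constant when $\overline{A}$ is compact), and the distance estimate (\ref{estme}) for the $\omega$-term. Where you diverge is the finiteness half: the paper simply invokes \cite[Lemma~5]{Z5a}, which states that $w_f(A)<\infty$ holds if and only if $c_*\bigl(\{x\in A:|f|(x)<\infty\}\bigr)>0$, and then reduces that condition to (\ref{iv;}) by using the q.e.-finiteness of $U^\vartheta$ and $U^\omega$ together with the strengthened countable subadditivity of inner capacity (footnote~\ref{foot-sub}); you instead prove the equivalence of $\breve{\mathcal E}^+_f(A)\ne\varnothing$ with (\ref{iv;}) from scratch. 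Your sufficiency construction (restricting a normalized capacitary measure of a compact subset of $\{x\in A:\psi(x)<\infty\}$ of positive capacity to a sublevel set $\{\psi\leqslant m\}$ that it charges~--- a relatively closed, hence compact, set by lower semicontinuity) and your necessity argument (integrability of $f$ against a hypothetical $\mu\in\breve{\mathcal E}^+_f(A)$ forces $\mu$ to be concentrated on $\{x\in A:\psi(x)<\infty\}$, contradicting \cite[Lemma~2.3.1]{F1}) are both sound. What your route buys is self-containedness~--- it avoids the external citation and the subadditivity machinery~--- and it isolates the reason why only the summand $\psi$ matters, namely that $|U^\vartheta|$ and $|U^\omega|$ are automatically $\mu$-integrable for every $\mu\in\breve{\mathcal E}^+(A)$, by the Cauchy--Schwarz inequality and (\ref{estme}) respectively. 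The measurability issue you flag as the main obstacle is in fact harmless: $\{\psi<\infty\}$ is Borel in the closed set $\overline{A}$ by lower semicontinuity, so its intersection with $A$ is $\mu$-measurable for any $\mu$ concentrated on $A$.
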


\begin{proof}
According to \cite[Lemma~5]{Z5a}, $w_f(A)<\infty$ is fulfilled if and only if
\[c_*\bigl(\{x\in A:\ |f|(x)<\infty\}\bigr)>0,\]
which however is equivalent to (\ref{iv;}) since the second and the third summands in (\ref{f1}) take finite values q.e.\ (hence n.e.) on $\mathbb R^n$ (see Section~\ref{sec-gen}).\footnote{Here we have used the following strengthened version of countable subadditivity for inner capacity: {\it For arbitrary $A\subset\mathbb R^n$ and universally measurable $U_j\subset\mathbb R^n$, $j\in\mathbb N$,
\[c_*\Bigl(\bigcup_{j\in\mathbb N}\,A\cap U_j\Bigr)\leqslant\sum_{j\in\mathbb N}\,c_*(A\cap U_j).\]} (See \cite[pp.~157--159]{F1} or \cite[p.~253]{Ca2}, compare with \cite[p.~144]{L}.)\label{foot-sub}}

It thus remains to show that $w_f(A)>-\infty$. For any given $\vartheta\in\mathcal E$,
\begin{equation*}\label{Isigma}
I_{U^\vartheta}(\mu)=\|\mu\|^2+2\int U^\vartheta\,d\mu=\|\mu+\vartheta\|^2-\|\vartheta\|^2\text{ \ for all $\mu\in\mathcal E$};
\end{equation*}
hence, by the strict positive definiteness of the $\alpha$-Riesz kernel,
\[w_{U^\vartheta}(A)\geqslant-\|\vartheta\|^2>-\infty.\]
On account of (\ref{estme}), the proof will be completed by verifying the inequality
\[\inf_{\mu\in\breve{\mathcal E}^+(A)}\,\int\psi\,d\mu>-\infty.\]
This however is obvious if $\psi\geqslant0$, while the remaining case of compact $\overline{A}$ is treated as usual, namely, by replacing $\psi$ by $\psi':=\psi+c\geqslant0$ on $\overline{A}$, where $c\in(0,\infty)$.
\end{proof}

\section{Main results}\label{sec-intr2}

$\bullet$ Recall that throughout the whole paper,
an external field $f$ is of form (\ref{f1}), where $\psi$, $\vartheta$, and $\omega$ satisfy $(\mathcal P_1)$--$(\mathcal P_3)$ as well as (\ref{iv;}). (These general conventions will not be repeated hereafter.)

$\bullet$ Along with these general assumptions,  throughout Sections~\ref{subs-main2}--\ref{sec-descr} as well as in Theorem~\ref{th2'}, we additionally require a set $A\subset\mathbb R^n$ to have the following property:
\begin{itemize}
  \item[$(\mathcal P_4)$] {\it The class $\mathcal E^+(A)$ is closed in the strong topology on $\mathcal E^+$.} (This in particular occurs if $A$ is {\it quasiclosed} ({\it quasicompact\/}), that is, if $A$ can be approximated in outer capacity by closed (compact) sets \cite{F71}; see Theorem~\ref{l-quasi} below.)
\end{itemize}

\subsection{The case of $\alpha\in(0,n)$}\label{subs-main} Theorems~\ref{th2'}, \ref{th4'}--\ref{th4'c2} and Corollary~\ref{qcomp} do hold for the $\alpha$-Riesz kernels of {\it arbitrary} order $\alpha\in(0,n)$.

\begin{theorem}\label{th2'} Assume, in addition, that $(\mathcal P_4)$ is fulfilled. If moreover
\begin{equation}\label{cafi'}
c_*(A)<\infty,
\end{equation}
then the {\rm(}unique{\rm)} solution $\lambda_{A,f}$ to Problem~{\rm\ref{pr-main}} does exist.\end{theorem}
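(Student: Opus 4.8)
The plan is to run the direct (variational) method in the pre-Hilbert space $\mathcal E$: extract a strong limit $\lambda$ from a minimizing sequence, and then check separately that $\lambda$ is \emph{admissible} (a probability measure on $A$ with $f$-integrable) and \emph{minimizing}. The three things to control are the strong convergence of the sequence, the conservation of total mass (this is exactly where $c_*(A)<\infty$ enters), and the semicontinuity of $I_f$.

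\emph{Step 1 (a minimizing sequence is strongly Cauchy).} Since $-\infty<w_f(A)<\infty$ by (\ref{fin}), the class $\breve{\mathcal E}^+_f(A)$ is nonempty, so I may fix $(\mu_j)\subset\breve{\mathcal E}^+_f(A)$ with $I_f(\mu_j)\to w_f(A)$. This class is convex, hence $(\mu_j+\mu_k)/2\in\breve{\mathcal E}^+_f(A)$ and $I_f((\mu_j+\mu_k)/2)\geqslant w_f(A)$. The linear term $2\int f\,d\mu$ cancels in the parallelogram combination, leaving
\[\tfrac12\|\mu_j-\mu_k\|^2=I_f(\mu_j)+I_f(\mu_k)-2I_f\!\left(\tfrac{\mu_j+\mu_k}{2}\right)\leqslant I_f(\mu_j)+I_f(\mu_k)-2w_f(A),\]
whose right-hand side tends to $0$ as $j,k\to\infty$. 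Thus $(\mu_j)$ is strongly Cauchy in $\mathcal E^+$. By the strong completeness of $\mathcal E^+$ and the fact that the strong topology is finer than the vague one (Section~\ref{sec-gen}), $\mu_j\to\lambda$ both strongly and vaguely for some $\lambda\in\mathcal E^+$; invoking $(\mathcal P_4)$, the strong limit lies in $\mathcal E^+(A)$.

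\emph{Step 2 (mass conservation via the inner capacitary measure).} Vague convergence only gives $\lambda(\mathbb R^n)\leqslant\liminf_j\mu_j(\mathbb R^n)=1$, so the crux is to exclude loss of mass at infinity. Here I use that (\ref{fin}) forces $c_*(A)>0$ and (\ref{cafi'}) gives $c_*(A)<\infty$, so the inner capacitary measure $\gamma_A\in\mathcal E^+$ exists and satisfies $U^{\gamma_A}=1$ n.e.\ on $A$ (cf.\ \cite{F1,L}). Any $\mu\in\mathcal E^+(A)$ has finite energy and hence does not charge the ($\mu$-measurable) set where $U^{\gamma_A}\neq1$, which is of inner capacity zero; therefore $U^{\gamma_A}=1$ $\mu$-a.e.\ and
\[\mu(\mathbb R^n)=\int U^{\gamma_A}\,d\mu=\langle\mu,\gamma_A\rangle.\]
Applying this to each $\mu_j$ and to $\lambda$ (all in $\mathcal E^+(A)$), and using $\gamma_A\in\mathcal E$ together with the strong continuity of the inner product, I obtain $1=\langle\mu_j,\gamma_A\rangle\to\langle\lambda,\gamma_A\rangle=\lambda(\mathbb R^n)$, whence $\lambda(\mathbb R^n)=1$.

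\emph{Step 3 (semicontinuity and conclusion).} It remains to pass to the limit in $I_f(\mu_j)=\|\mu_j\|^2+2\int\psi\,d\mu_j+2\langle\mu_j,\vartheta\rangle+2\int U^\omega\,d\mu_j$. The energy term converges by strong convergence, and $\langle\mu_j,\vartheta\rangle\to\langle\lambda,\vartheta\rangle$ since $\vartheta\in\mathcal E$ by $(\mathcal P_2)$. The field term $\int\psi\,d\mu_j$ is vaguely lower semicontinuous ($\psi$ being l.s.c.\ and, after the usual additive normalization on compact $\overline A$, ${}\geqslant0$), so $\liminf_j\int\psi\,d\mu_j\geqslant\int\psi\,d\lambda$. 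Finally, by $(\mathcal P_3)$ the potential $U^\omega$ is bounded and continuous on $\overline A$ and vanishes at infinity; combined with the mass conservation of Step~2, a three-$\varepsilon$ $C_0$-approximation yields $\int U^\omega\,d\mu_j\to\int U^\omega\,d\lambda$. Collecting these, $I_f(\lambda)\leqslant\liminf_j I_f(\mu_j)=w_f(A)$; in particular $\int\psi\,d\lambda<\infty$, so $f$ is $\lambda$-integrable and $\lambda\in\breve{\mathcal E}^+_f(A)$ is admissible, forcing $I_f(\lambda)\geqslant w_f(A)$ as well. Hence $I_f(\lambda)=w_f(A)$ and $\lambda=\lambda_{A,f}$ solves Problem~\ref{pr-main}; uniqueness is the convexity/parallelogram argument already recorded after Theorem~\ref{th-ch1}.

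The main obstacle is Step~2: without $c_*(A)<\infty$ the strong (and vague) limit of a minimizing sequence may leak mass to infinity and fail to be a probability measure. It is precisely the finiteness of the inner capacity that supplies the finite-energy capacitary measure $\gamma_A$, turning ``total mass'' into the strongly continuous linear functional $\langle\,\cdot\,,\gamma_A\rangle$ on $\mathcal E^+(A)$ and thereby pinning $\lambda(\mathbb R^n)=1$.
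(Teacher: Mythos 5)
Your proof is correct, and its skeleton (direct method, parallelogram identity to make a minimizing sequence strongly Cauchy, strong completeness of $\mathcal E^+$, $(\mathcal P_4)$ to keep the limit in $\mathcal E^+(A)$, and lower semicontinuity of $I_f$ along strong convergence) coincides with the paper's, which packages these steps as Lemma~\ref{la1}, Theorem~\ref{quasi-complete}, and Lemma~\ref{l-str-sem}. Where you genuinely diverge is the mass-conservation step, which is indeed the crux. The paper's Lemma~\ref{str-cl-1} works with the net of inner capacitary measures $\gamma_{A\setminus K}$, $K$ compact, shows via monotonicity of $c_*(A\setminus K)$ and perfectness that $\gamma_{A\setminus K}\to0$ strongly, and combines the resulting tail estimate $\int 1_{A\setminus K}\,d\mu_s\leqslant\|\gamma_{A\setminus K}\|\cdot\|\mu_s\|$ with an upper-semicontinuity inequality for $1_K$; crucially, this yields $\mu_0(\mathbb R^n)=1$ for a strong limit $\mu_0$ that is only known to lie in $\mathcal E^+$, which the paper needs elsewhere (e.g.\ inside Lemma~\ref{l-str-sem}, where the limit is not a priori concentrated on $A$). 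You instead first invoke $(\mathcal P_4)$ to place $\lambda$ in $\mathcal E^+(A)$ and then use the single capacitary measure $\gamma_A$ with $U^{\gamma_A}=1$ n.e.\ on $A$, turning total mass into the strongly continuous functional $\langle\,\cdot\,,\gamma_A\rangle$ on $\mathcal E^+(A)$; this is shorter and quite elegant, at the price of being restricted to limits already known to be concentrated on $A$ (which suffices for this theorem). Your treatment of the $\omega$-term is also slightly stronger than needed — you prove actual convergence of $\int U^\omega\,d\mu_j$ via a $C_0$-approximation, whereas the paper only extracts lower semicontinuity of each of $U^{\omega^+}$ and $-U^{\omega^-}$ through Lemma~\ref{lemma-semi}; either suffices. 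All the auxiliary facts you use ($U^{\gamma_A}=1$ n.e.\ on $A$, $C$-absolute continuity of finite-energy measures, $\mu$-measurability of the exceptional set) are available and correctly applied.
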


\begin{corollary}\label{qcomp}
Problem~{\rm\ref{pr-main}} is solvable for any quasicompact $A\subset\mathbb R^n$.
\end{corollary}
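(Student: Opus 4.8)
The plan is to realize this corollary as an immediate consequence of Theorem~\ref{th2'}, whose only hypotheses beyond the standing assumptions are property $(\mathcal P_4)$ and the finiteness condition (\ref{cafi'}). It therefore suffices to verify that both of these hold whenever $A$ is quasicompact; the genuine mathematical content already resides in Theorem~\ref{th2'} (and in Theorem~\ref{l-quasi}), so the work here is purely a matter of checking the applicability of that theorem.

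First I would dispose of $(\mathcal P_4)$. Since $A$ is quasicompact, it is in particular quasiclosed, and the parenthetical remark accompanying $(\mathcal P_4)$ — made precise in Theorem~\ref{l-quasi} — asserts that for such $A$ the cone $\mathcal E^+(A)$ is closed in the strong topology on $\mathcal E^+$. Hence $(\mathcal P_4)$ is automatic and requires no further argument.

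Next I would check (\ref{cafi'}), i.e.\ $c_*(A)<\infty$. Because inner capacity never exceeds outer capacity, it is enough to bound $c^*(A)$. Exploiting that $A$ is approximable in outer capacity by compact sets, fix a compact $K$ with $c^*(A\setminus K)$ finite (indeed arbitrarily small); then $A\subset K\cup(A\setminus K)$, and the countable subadditivity of outer capacity recalled in Section~\ref{sec-gen} yields
\[c_*(A)\leqslant c^*(A)\leqslant c^*(K)+c^*(A\setminus K)<\infty,\]
since every compact subset of $\mathbb R^n$ has finite $\alpha$-Riesz capacity. This establishes (\ref{cafi'}).

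With $(\mathcal P_4)$ and (\ref{cafi'}) both in force — and the standing hypotheses $(\mathcal P_1)$--$(\mathcal P_3)$ together with (\ref{iv;}), equivalently (\ref{fin}) by Lemma~\ref{l-aux-3}, assumed throughout — Theorem~\ref{th2'} applies verbatim and delivers the (unique) solution $\lambda_{A,f}$. The only step demanding any attention is the elementary capacity estimate above, and even that reduces to subadditivity plus the finiteness of capacity for compact sets; consequently I do not anticipate any real obstacle, the substance of the statement having been shifted onto Theorem~\ref{th2'} and the strong-closedness result Theorem~\ref{l-quasi}.
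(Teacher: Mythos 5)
Your proposal is correct and follows essentially the same route as the paper: the paper likewise reduces the corollary to Theorem~\ref{th2'} by noting that $(\mathcal P_4)$ holds via Theorem~\ref{l-quasi} and that (\ref{cafi'}) holds because compact sets have finite capacity. Your subadditivity estimate $c_*(A)\leqslant c^*(K)+c^*(A\setminus K)<\infty$ merely spells out the capacity-finiteness step that the paper leaves implicit, and it is a valid justification.
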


\begin{proof}
For quasicompact $A$, (\ref{cafi'}) necessarily holds, the $\alpha$-Riesz capacity of a compact set being obviously finite. Since
$(\mathcal P_4)$ is fulfilled as well (Theorem~\ref{l-quasi}), the corollary follows directly from Theorem~\ref{th2'}.
\end{proof}

Given $A\subset\mathbb R^n$, denote by $\mathfrak C_A$ the upward directed set of all compact subsets $K$ of $A$, where $K_1\leqslant K_2$ if and only if $K_1\subset K_2$. If a net $(x_K)_{K\in\mathfrak C_A}\subset Y$ converges to $x_0\in Y$, $Y$ being a topological space, then we shall indicate this fact by writing
\begin{equation*}x_K\to x_0\text{ \ in $Y$ as $K\uparrow A$}.\end{equation*}

\begin{theorem}\label{th4'} Assume that the solution $\lambda_{A,f}$ to Problem~{\rm\ref{pr-main}} exists.\footnote{See Theorems~\ref{th2'}, \ref{th3'}, \ref{th-nonth} and Corollaries~\ref{qcomp}, \ref{cor2}(b) for sufficient conditions for this to hold.} Then
\begin{equation}\label{conv2}\lambda_{K,f}\to\lambda_{A,f}\text{ \ strongly and vaguely in $\mathcal E^+$ as
$K\uparrow A$},\end{equation}
$\lambda_{K,f}$ being the solution to Problem~{\rm\ref{pr-main}} for $K\in\mathfrak C_A$ large enough.
The inner $f$-wei\-g\-h\-t\-ed equilibrium constant $c_{K,f}$ also varies continuously when $K\uparrow A$, that is,
\begin{equation}\label{conv3}\lim_{K\uparrow A}\,c_{K,f}=c_{A,f}.\end{equation}
\end{theorem}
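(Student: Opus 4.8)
The plan is to compare each minimizer $\lambda_{K,f}$ not with the others but with the normalized traces of the (assumed to exist) global minimizer $\lambda:=\lambda_{A,f}$, exploiting the pre-Hilbert structure of $\mathcal E$ through a parallelogram identity; this sidesteps the fact that the $\lambda_{K,f}$ live in different admissible classes $\breve{\mathcal E}^+_f(K)$, so that no single variational inequality applies. First I would settle the existence of $\lambda_{K,f}$ for large $K$. Since $\lambda_{A,f}$ exists, (\ref{fin}) holds for $A$, so by Lemma~\ref{l-aux-3} one has $c_*(\{x\in A:\psi(x)<\infty\})>0$; as inner capacity is the supremum of the capacities of compact subsets, there is a compact $K_0\subset\{x\in A:\psi(x)<\infty\}$ of positive capacity. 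For every $K\in\mathfrak C_A$ with $K\supset K_0$, condition (\ref{fin}) holds for $K$ as well (again by Lemma~\ref{l-aux-3}), so Corollary~\ref{qcomp} provides the unique $\lambda_{K,f}$, with $I_f(\lambda_{K,f})=w_f(K)$. Monotonicity of the classes $\breve{\mathcal E}^+_f(K)$ under inclusion gives $w_f(K)\geqslant w_f(A)$, the net $w_f(K)$ being nonincreasing as $K\uparrow A$.

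Next, writing $\lambda_K:=\lambda_{K,f}$ and $\eta_K:=\lambda|_K/\lambda(K)$ for $K$ so large that $\lambda(K)>0$, one has $\eta_K\in\breve{\mathcal E}^+_f(K)$, and I would show that $\eta_K\to\lambda$ strongly while $I_f(\eta_K)\to w_f(A)$. The strong convergence $\lambda|_K\to\lambda$ follows from
\[\|\lambda-\lambda|_K\|^2=I(\lambda|_{A\setminus K})=\int_{A\setminus K}U^{\lambda|_{A\setminus K}}\,d\lambda\leqslant\int_{A\setminus K}U^{\lambda}\,d\lambda\longrightarrow0,\]
the last limit holding because $\int U^\lambda\,d\lambda=I(\lambda)<\infty$ while $1_{A\setminus K}\downarrow0$ $\lambda$-a.e.\ as $K\uparrow A$; since $\lambda(K)\to1$, also $\eta_K\to\lambda$ strongly. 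Passing to $I_f$, the energy term converges by strong continuity of $\|\cdot\|$, the term $\int U^\vartheta\,d\eta_K=\langle\vartheta,\eta_K\rangle$ by continuity of the inner product, the term $\int U^\omega\,d\eta_K$ because $U^\omega$ is bounded on $A$ by $d(S_\omega,A)^{\alpha-n}|\omega|(\mathbb R^n)$ (cf.\ $(\mathcal P_3)$ and (\ref{estme})), and $\int\psi\,d\eta_K\to\int\psi\,d\lambda$ by monotone convergence combined with the inner regularity of $\lambda$ (taking $\psi\geqslant0$, the remaining compact case being reduced by adding a constant). Hence $w_f(K)\leqslant I_f(\eta_K)\to w_f(A)$, which together with $w_f(K)\geqslant w_f(A)$ yields $\lim_{K\uparrow A}w_f(K)=w_f(A)$.

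Finally I would apply the identity $\tfrac12\|\mu-\nu\|^2=I_f(\mu)+I_f(\nu)-2I_f\bigl(\tfrac{\mu+\nu}{2}\bigr)$, valid for $\mu,\nu\in\breve{\mathcal E}^+_f(K)$ by the strict positive definiteness of the kernel, to $\mu=\lambda_K$ and $\nu=\eta_K$. As $\tfrac12(\lambda_K+\eta_K)\in\breve{\mathcal E}^+_f(K)$, we obtain
\[\tfrac12\|\lambda_K-\eta_K\|^2\leqslant w_f(K)+I_f(\eta_K)-2w_f(K)=I_f(\eta_K)-w_f(K)\longrightarrow0.\]
Combined with $\eta_K\to\lambda$ strongly, this gives $\lambda_{K,f}\to\lambda_{A,f}$ strongly, and hence vaguely (the strong topology on $\mathcal E^+$ being finer than the vague one), which is (\ref{conv2}). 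Then (\ref{conv3}) follows at once: strong convergence gives $I(\lambda_{K,f})\to I(\lambda_{A,f})$, so from $I_f(\lambda_{K,f})=w_f(K)\to w_f(A)$ and $I_f=I(\cdot)+2\int f\,d(\cdot)$ we get $\int f\,d\lambda_{K,f}\to\int f\,d\lambda_{A,f}$; substituting into $c_{K,f}=w_f(K)-\int f\,d\lambda_{K,f}$ and recalling (\ref{cc}) yields $c_{K,f}\to c_{A,f}$.

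The main obstacle is the convergence $I_f(\eta_K)\to w_f(A)$ in the second step, where the three summands of $f$ must be controlled by genuinely different mechanisms: the non-l.s.c.\ summand $U^\omega$ via the separation $(\mathcal P_3)$ (which makes it bounded on $A$), the possibly unbounded l.s.c.\ summand $\psi$ via monotone convergence and inner regularity, and $U^\vartheta$ via strong continuity of the inner product. Once this is in place, the parallelogram comparison with the traces of $\lambda_{A,f}$ does all the remaining work, and — as it should, since $(\mathcal P_4)$ is not assumed here — the argument never needs to identify a vague limit or to know that it is concentrated on $A$.
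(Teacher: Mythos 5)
Your proof is correct. It shares with the paper's argument the two load-bearing ingredients: the convergence $w_f(K)\downarrow w_f(A)$, obtained by restricting a measure of the class $\breve{\mathcal E}^+_f(A)$ to compact subsets and controlling the three summands of $f$ separately (you need this only for the single measure $\lambda_{A,f}$, whereas the paper proves it for every $\mu\in\breve{\mathcal E}^+_f(A)$ at no extra cost), and the parallelogram identity, which converts near-minimality into strong proximity. Where you genuinely diverge is in how the strong limit of $(\lambda_{K,f})$ is identified. The paper routes this through its extremal-measure machinery: Lemma~\ref{la1} shows that every minimizing net converges strongly to the unique extremal measure $\xi_{A,f}$, the net $(\lambda_{K,f})$ is minimizing because $w_f(K)\downarrow w_f(A)$, and Corollary~\ref{cor-sol-extr} gives $\xi_{A,f}=\lambda_{A,f}$ via the trivial minimizing net. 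You instead compare $\lambda_{K,f}$ directly with the normalized trace $\eta_K=\lambda_{A,f}|_K/\lambda_{A,f}(K)$ inside the common class $\breve{\mathcal E}^+_f(K)$, taking $w_f(K)$ rather than $w_f(A)$ as the lower bound for the midpoint, and checking separately that $\eta_K\to\lambda_{A,f}$ strongly. This yields a self-contained argument that never mentions $\xi_{A,f}$; the paper's packaging buys reusability, since Lemma~\ref{la1} and the fact that $(\lambda_{K,f})\in\mathbb M_f(A)$ are exploited again (e.g.\ in Lemma~\ref{LLL} and in Step~3 of the proof of Theorem~\ref{th3'}). Your derivation of (\ref{conv3}) from strong convergence together with $w_f(K)\to w_f(A)$ coincides with the paper's.
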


\begin{theorem}\label{th4'c}Let $(A_j)\subset\mathbb R^n$ be an increasing sequence of universally measurable sets with the union $A$. Then
\begin{equation}\label{cont-count1}
w_f(A_j)\downarrow w_f(A)\text{ \ as $j\to\infty$}.
\end{equation}
If moreover there exist the minimizers $\lambda_{A,f}$ and $\lambda_{A_j,f}$, then also\footnote{This holds, for instance, if $A_j$, $j\in\mathbb N$, are closed, and $A$, their union, is of finite capacity. See also Theorem~\ref{conv6} below.\label{th4'cf}}
\begin{gather}
\label{cont-count12}\lambda_{A_j,f}\to\lambda_{A,f}\text{ \ strongly and vaguely in $\mathcal E^+$ as $j\to\infty$},\\
\label{cont-count13}\lim_{j\to\infty}\,c_{A_j,f}=c_{A,f}.
\end{gather}
\end{theorem}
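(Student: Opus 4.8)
The plan is to establish the monotone limit (\ref{cont-count1}) first, and then to deduce both (\ref{cont-count12}) and (\ref{cont-count13}) from it by means of the Hilbertian structure of $\mathcal E$. Since $A_j\uparrow A$, we have the inclusions $\breve{\mathcal E}^+_f(A_j)\subset\breve{\mathcal E}^+_f(A_{j+1})\subset\breve{\mathcal E}^+_f(A)$, so that the values $w_f(A_j)$ decrease and $\lim_{j}w_f(A_j)\geqslant w_f(A)$. For the reverse inequality I would approximate an arbitrary $\mu\in\breve{\mathcal E}^+_f(A)$ by the normalized traces $\tilde\mu_j:=\mu|_{A_j}/\mu(A_j)$. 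As $\mu$ is concentrated on $A$ and $A_j\uparrow A$, we have $\mu(A_j)\uparrow\mu(A)=1$, hence $\tilde\mu_j\in\breve{\mathcal E}^+_f(A_j)$ is well defined for all large $j$; if moreover $I_f(\tilde\mu_j)\to I_f(\mu)$, then $\lim_j w_f(A_j)\leqslant I_f(\mu)$, and taking the infimum over $\mu\in\breve{\mathcal E}^+_f(A)$ yields $\lim_j w_f(A_j)\leqslant w_f(A)$, as required.

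The convergence $I_f(\tilde\mu_j)\to I_f(\mu)$ would be obtained by treating the energy and the field separately. Writing $\mu-\mu|_{A_j}=1_{A\setminus A_j}\cdot\mu$, Lebesgue's dominated convergence theorem gives $\|\mu-\mu|_{A_j}\|^2=\int\!\int\kappa_\alpha(x,y)1_{A\setminus A_j}(x)1_{A\setminus A_j}(y)\,d(\mu\otimes\mu)(x,y)\to0$, the integrand decreasing to $0$ pointwise $(\mu\otimes\mu)$-a.e.\ (each $x\in A$ belongs to $A_j$ for $j$ large) and being dominated by the $(\mu\otimes\mu)$-integrable function $\kappa_\alpha$, since $I(\mu)<\infty$. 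In particular $\|\mu|_{A_j}\|\to\|\mu\|$. Similarly $\int f\,d\mu|_{A_j}=\int f1_{A_j}\,d\mu\to\int f\,d\mu$ by dominated convergence, $|f|$ being $\mu$-integrable as $\mu\in\mathcal E^+_f(A)$. Dividing by $\mu(A_j)\to1$ then gives $I_f(\tilde\mu_j)\to I_f(\mu)$ and completes (\ref{cont-count1}).

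Granting now the existence of the minimizers, (\ref{cont-count12}) follows almost formally. Each $\lambda_{A_j,f}$ lies in $\breve{\mathcal E}^+_f(A_j)\subset\breve{\mathcal E}^+_f(A)$, hence so does the midpoint $\tfrac12(\lambda_{A,f}+\lambda_{A_j,f})$, which is therefore admissible in the definition of $w_f(A)$. Because the field term in $I_f$ is affine, the parallelogram identity in $\mathcal E$ yields
\[
I_f\Bigl(\tfrac{\lambda_{A,f}+\lambda_{A_j,f}}{2}\Bigr)=\tfrac12 w_f(A)+\tfrac12 w_f(A_j)-\tfrac14\|\lambda_{A,f}-\lambda_{A_j,f}\|^2,
\]
and bounding the left member below by $w_f(A)$ gives $\|\lambda_{A,f}-\lambda_{A_j,f}\|^2\leqslant2\bigl(w_f(A_j)-w_f(A)\bigr)$. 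By (\ref{cont-count1}) the right-hand side tends to $0$, so $\lambda_{A_j,f}\to\lambda_{A,f}$ strongly; the $\alpha$-Riesz kernel being perfect, the convergence is also vague, proving (\ref{cont-count12}).

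Finally, combining (\ref{cc}) with $w_f(A)=\|\lambda_{A,f}\|^2+2\int f\,d\lambda_{A,f}$ yields the identity $c_{A,f}=\tfrac12\bigl(\|\lambda_{A,f}\|^2+w_f(A)\bigr)$, valid likewise for each $A_j$. Since strong convergence forces $\|\lambda_{A_j,f}\|\to\|\lambda_{A,f}\|$ and (\ref{cont-count1}) gives $w_f(A_j)\to w_f(A)$, relation (\ref{cont-count13}) follows at once. The one genuinely technical step in this scheme is the strong convergence $\mu|_{A_j}\to\mu$ of traces behind (\ref{cont-count1}); once that dominated-convergence estimate is secured, the remaining assertions drop out of the convexity identity, the perfectness of the kernel, and the continuity of the norm.
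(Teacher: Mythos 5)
Your proof is correct and follows essentially the same route as the paper's: the monotone limit (\ref{cont-count1}) is obtained by approximating an arbitrary competitor $\mu\in\breve{\mathcal E}^+_f(A)$ by its normalized traces to the $A_j$ (the paper passes to the limit summand-by-summand via monotone convergence, you via dominated convergence --- both legitimate since $\kappa_\alpha$ and $|f|$ are integrable for the measures in question), and the strong convergence (\ref{cont-count12}) rests on the same parallelogram-identity estimate $\|\lambda_{A_j,f}-\lambda_{A,f}\|^2\leqslant2\bigl(w_f(A_j)-w_f(A)\bigr)$ that underlies the paper's Lemma~\ref{la1} on extremal measures, which you simply apply directly to the pair $\lambda_{A_j,f}$, $\lambda_{A,f}$ rather than to a minimizing sequence. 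Your closed-form identity $c_{A,f}=\tfrac12\bigl(\|\lambda_{A,f}\|^2+w_f(A)\bigr)$ is a tidy repackaging of the paper's final step via (\ref{cc}) and is equally valid.
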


\begin{theorem}\label{th4'c2}Let $(A_j)\subset\mathbb R^n$ be a decreasing sequence of quasiclosed sets with the intersection $A$, and let $c_*(A_{j_0})<\infty$ for some $j_0\in\mathbb N$. Then
\begin{equation}\label{cont-count1d}
w_f(A_j)\uparrow w_f(A)\text{ \ as $j\to\infty$},
\end{equation}
and moreover {\rm(\ref{cont-count12})}--{\rm(\ref{cont-count13})} hold true {\rm(}the minimizers $\lambda_{A_j,f}$, $j\geqslant j_0$, and $\lambda_{A,f}$ do exist{\rm)}.\end{theorem}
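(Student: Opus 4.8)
The plan is to run the standard energy‑minimization machinery—parallelogram identity plus strong completeness of $\mathcal E^+$—while using the finiteness hypothesis $c_*(A_{j_0})<\infty$ (through the inner capacitary measure of $A_{j_0}$) to keep the total mass under control, which is the one point that is not routine. First I would record existence and the easy half of (\ref{cont-count1d}). Each $A_j$ with $j\geqslant j_0$ is quasiclosed, hence satisfies $(\mathcal P_4)$ by Theorem~\ref{l-quasi}, and $A_j\subset A_{j_0}$ gives $c_*(A_j)\leqslant c_*(A_{j_0})<\infty$; so Theorem~\ref{th2'} produces the minimizer $\lambda_j:=\lambda_{A_j,f}$. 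Since $A_1\supset A_2\supset\dots\supset A$, the admissible classes $\breve{\mathcal E}^+_f$ shrink, whence $w_f(A_j)\leqslant w_f(A_{j+1})\leqslant w_f(A)$; thus $w_f(A_j)$ increases to some $w_*\leqslant w_f(A)$, and the entire task is to prove $w_*=w_f(A)$.

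Next comes the Cauchy estimate. Writing $I_f(\mu)=\|\mu\|^2+2\int f\,d\mu$ and using that $\mu\mapsto2\int f\,d\mu$ is linear, one gets for all admissible $\mu,\nu$ the identity $\tfrac12\|\mu-\nu\|^2=I_f(\mu)+I_f(\nu)-2I_f\bigl(\tfrac{\mu+\nu}2\bigr)$. Applying it with $\mu=\lambda_i$, $\nu=\lambda_k$, $i\leqslant k$, and noting that $\tfrac{\lambda_i+\lambda_k}2\in\breve{\mathcal E}^+_f(A_i)$ (both measures are concentrated on $A_i$), so that $I_f\bigl(\tfrac{\lambda_i+\lambda_k}2\bigr)\geqslant w_f(A_i)$, I obtain $\tfrac12\|\lambda_i-\lambda_k\|^2\leqslant w_f(A_k)-w_f(A_i)$. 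As $w_f(A_j)\uparrow w_*$, the right‑hand side tends to $0$, so $(\lambda_j)_{j\geqslant j_0}$ is strongly Cauchy; by the completeness of $\mathcal E^+$ and the fact that the strong topology is finer than the vague one, it converges strongly and vaguely to a single $\lambda\in\mathcal E^+$. That $\lambda$ is concentrated on $A$ is then immediate: for each fixed $m\geqslant j_0$ all $\lambda_j$ with $j\geqslant m$ lie in the strongly closed set $\mathcal E^+(A_m)$ (property $(\mathcal P_4)$ for the quasiclosed $A_m$), hence so does the strong limit $\lambda$; since the $A_m$ decrease, $\bigcap_{m}\mathcal E^+(A_m)=\mathcal E^+(A)$, and thus $\lambda\in\mathcal E^+(A)$.

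The main obstacle is to show $\lambda(\mathbb R^n)=1$, i.e.\ that no mass escapes to infinity, and this is exactly where $c_*(A_{j_0})<\infty$ is needed. Let $\gamma$ be the inner capacitary measure of $A_{j_0}$, so that $U^\gamma=1$ n.e.\ on $A_{j_0}$ and $\|\gamma\|^2=\gamma(\mathbb R^n)=c_*(A_{j_0})<\infty$. Every $\lambda_j$ ($j\geqslant j_0$) and $\lambda$ are concentrated on $A_{j_0}$ and of finite energy, hence charge no subset of $A_{j_0}$ of inner capacity zero, so $\mu(\mathbb R^n)=\int U^\gamma\,d\mu=\langle\mu,\gamma\rangle$ for $\mu\in\{\lambda_j,\lambda\}$. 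Cauchy--Schwarz now gives $|\lambda_j(\mathbb R^n)-\lambda(\mathbb R^n)|=|\langle\lambda_j-\lambda,\gamma\rangle|\leqslant\|\lambda_j-\lambda\|\,\|\gamma\|\to0$, whence $\lambda(\mathbb R^n)=\lim_j\lambda_j(\mathbb R^n)=1$ and $\lambda\in\breve{\mathcal E}^+(A)$.

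Finally I would identify $\lambda$ with $\lambda_{A,f}$ and read off the constants. Strong convergence yields $\|\lambda_j\|^2\to\|\lambda\|^2$ and $\langle\lambda_j,\vartheta\rangle\to\langle\lambda,\vartheta\rangle$; mass convergence upgrades the vague convergence to weak convergence, so $\int U^\omega\,d\lambda_j\to\int U^\omega\,d\lambda$ (as $d(S_\omega,A_{j_0})>0$, $U^\omega$ is bounded and continuous near $A_{j_0}$), while lower semicontinuity of $\psi$ gives $\int\psi\,d\lambda\leqslant\liminf_j\int\psi\,d\lambda_j$. Summing, $I_f(\lambda)\leqslant\liminf_j I_f(\lambda_j)=w_*$, and since $\lambda\in\breve{\mathcal E}^+_f(A)$ also $I_f(\lambda)\geqslant w_f(A)\geqslant w_*$; hence $I_f(\lambda)=w_*=w_f(A)$. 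This establishes (\ref{cont-count1d}), shows $\lambda=\lambda_{A,f}$ by uniqueness, and gives (\ref{cont-count12}). Since now $I_f(\lambda_j)\to I_f(\lambda)$ with the quadratic, $\vartheta$‑, and $\omega$‑terms all convergent, solving for the remaining term forces $\int\psi\,d\lambda_j\to\int\psi\,d\lambda$, so $\int f\,d\lambda_j\to\int f\,d\lambda$; combined with $w_f(A_j)\to w_f(A)$ and (\ref{cc}) this yields $c_{A_j,f}\to c_{A,f}$, i.e.\ (\ref{cont-count13}). The delicate points are the mass‑preservation step above and the correct passage to the limit in the $\psi$‑ and $\omega$‑integrals; the strictly positive definite, perfect kernel supplies everything else.
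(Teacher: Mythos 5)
Your proposal is correct and follows essentially the same route as the paper: the parallelogram identity gives the strong Cauchy property of $(\lambda_{A_j,f})$, strong completeness of $\mathcal E^+$ together with the strong closedness of the cones $\mathcal E^+(A_m)$ yields a limit $\lambda\in\mathcal E^+(A)$, mass is preserved via the capacitary measure of the finite-capacity set, and lower semicontinuity of $I_f$ along strong convergence identifies $\lambda$ with $\lambda_{A,f}$ and gives the convergence of $w_f(A_j)$ and $c_{A_j,f}$. The only (harmless) differences are that you inline a direct mass-preservation argument through $\gamma_{A_{j_0}}$ where the paper invokes its Lemma~\ref{str-cl-1}/Theorem~\ref{quasi-complete} (whose proofs rest on the same idea), and you obtain the existence of $\lambda_{A,f}$ as a byproduct of the limit construction instead of citing Fuglede's result that a countable intersection of quasiclosed sets is quasiclosed.
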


\begin{remark}
Theorem~\ref{th4'c2} as well as Theorem~\ref{conv7} (see below) remains valid for {\it nets} in place of sequences, provided that the sets in question are {\it closed}.
\end{remark}

\subsection{The case of $\alpha\in(0,2]$}\label{subs-main2}Throughout Sections~\ref{subs-main2}--\ref{sec-descr}, we always assume that:
\begin{itemize}
\item $\alpha\in(0,2]$.
\item $(\mathcal P_4)$ is fulfilled.
\item $\psi$, $\vartheta$, and $\omega$ in (\ref{f1}) satisfy the hypotheses
$\psi=0$, $\vartheta^+=\omega^+=0$.\footnote{Due to $\psi=0$, (\ref{iv;}) is equivalent to the assumption $c_*(A)>0$.}
\end{itemize}

To simplify notations, write $\tau:=\vartheta^-$ and $\sigma:=\omega^-$; then $f$ takes the form
\begin{equation}\label{f3'}
f=-U^\tau-U^\sigma=-U^\delta,
\end{equation}
where $\delta:=\tau+\sigma\in\mathfrak M^+(\mathbb R^n)$, while $\tau$ and $\sigma$ have the following properties:
\begin{itemize}
\item[$(\mathcal P_2')$] $\tau\in\mathcal E^+(\mathbb R^n)$.
\item[$(\mathcal P_3')$] $\sigma\in\mathfrak M^+(\mathbb R^n)$ {\it is bounded and such that}
\begin{equation}\label{f3''}d(S_\sigma,A)>0.\end{equation}
\end{itemize}

Denote by $\mu^Q$ {\it the inner $\alpha$-Riesz balayage} of $\mu$ to $Q$, where $\mu\in\mathfrak M^+$ and $Q\subset\mathbb R^n$ are {\it arbitrary} (see \cite{Z-bal,Z-bal2} for some basic results on this concept, and \cite{Z-arx1}--\cite{Z-arx} for their further development; cf.\ also Section~\ref{sec-lemmas2} below).

\begin{theorem}\label{th3'}Under the above assumptions, if moreover
\begin{equation}\label{leq1}
 \delta(\mathbb R^n)\leqslant1,
\end{equation}
then Problem~{\rm\ref{pr-main}} is solvable if and only if
\begin{equation}\label{bal1}
\text{either \ $c_*(A)<\infty$, \ or \ $\delta^A(\mathbb R^n)=1$.}
\end{equation}
In the affirmative case, the {\rm(}unique{\rm)} solution $\lambda_{A,f}$ to Problem~{\rm\ref{pr-main}} has the form
\begin{equation}\label{RR}\lambda_{A,f}=\left\{
\begin{array}{cl}\delta^A+\eta_{A,f}\gamma_A&\text{if \ $c_*(A)<\infty$},\\
\delta^A&\text{otherwise},\\ \end{array} \right.
\end{equation}
where $\gamma_A$ is the inner capacitary measure on $A$, normalized by $\gamma_A(\mathbb R^n)=c_*(A)$, while
\begin{equation}\label{eqalt}
\eta_{A,f}:=\frac{1-\delta^A(\mathbb R^n)}{c_*(A)}\in[0,\infty).
\end{equation}

Furthermore, this $\lambda_{A,f}$ can alternatively be characterized by means of any one of the following three assertions:
\begin{itemize}
\item[{\rm(i)}] $\lambda_{A,f}$ is the unique measure of minimum energy in the class
\begin{equation}\label{gamma}
\Lambda_{A,f}:=\bigl\{\mu\in\mathfrak M^+: \ U^\mu_f\geqslant\eta_{A,f}\text{ \ n.e.\ on\ $A$}\bigr\},
  \end{equation}
$\eta_{A,f}$ being introduced by formula {\rm(\ref{eqalt})}. That is, $\lambda_{A,f}\in\Lambda_{A,f}$ and
\begin{equation}\label{minen}
I(\lambda_{A,f})=\min_{\mu\in\Lambda_{A,f}}\,I(\mu).
\end{equation}
\item[{\rm(ii)}] $\lambda_{A,f}$ is the unique measure of minimum potential in the class $\Lambda_{A,f}$, introduced by means of {\rm(\ref{gamma})}. That is, $\lambda_{A,f}\in\Lambda_{A,f}$ and\footnote{This implies immediately that
$\lambda_{A,f}$ can also be characterized as the unique measure of minimum $f$-we\-i\-g\-h\-t\-ed potential in the class $\Lambda_{A,f}$ --- now, however, nearly everywhere on $\mathbb R^n$: \begin{equation*} U^{\lambda_{A,f}}_f=\min_{\mu\in\Lambda_{A,f}}\,U^\mu_f\text{ \ n.e.\ on\ $\mathbb R^n$}.\end{equation*}
This relation as well as (\ref{minen}) and (\ref{minpot}) would remain valid if $\mu\in\Lambda_{A,f}$ were of finite energy.}
\begin{equation}\label{minpot}
U^{\lambda_{A,f}}=\min_{\mu\in\Lambda_{A,f}}\,U^\mu\text{ \ on\ $\mathbb R^n$}.
\end{equation}
\item[{\rm(iii)}] $\lambda_{A,f}$ is the only measure in $\mathcal E^+(A)$ with the property
$U^{\lambda_{A,f}}_f=\eta_{A,f}$ n.e.\ on $A$.
\end{itemize}

In addition, the inner $f$-weighted equilibrium constant $c_{A,f}$, introduced by {\rm(\ref{cc})}, admits an alternative representation
\begin{equation}\label{const-alt}c_{A,f}=\eta_{A,f},\end{equation}
and hence {\rm(\ref{conv3})} can be specified as follows:
\begin{equation}\label{conv3'}c_{K,f}\downarrow c_{A,f}\text{ \ in $\mathbb R$ as $K\uparrow A$}.\end{equation}
\end{theorem}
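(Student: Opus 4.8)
The plan is to reduce everything to the two characteristic inequalities of Theorem~\ref{th-ch2} by exploiting a balayage decomposition of the candidate minimizer. First I would record the properties of the inner balayage $\delta^A$ and of the inner capacitary measure $\gamma_A$ on which the proof leans. Under $(\mathcal P_4)$ the cone $\mathcal E^+(A)$ is strongly closed, and the recently developed theory of inner $\alpha$-Riesz balayage (\cite{Z-bal,Z-bal2} and its sequels) gives $\delta^A\in\mathcal E^+(A)$ with $U^{\delta^A}=U^\delta$ n.e.\ on $A$, $U^{\delta^A}\leqslant U^\delta$ on $\mathbb R^n$, and $\delta^A(\mathbb R^n)\leqslant\delta(\mathbb R^n)\leqslant1$; finiteness of $I(\delta^A)$ uses $(\mathcal P_2')$ for $\tau$ and the boundedness of $U^\sigma$ on $A$ coming from $(\mathcal P_3')$. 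When $c_*(A)<\infty$ one has in addition $\gamma_A\in\mathcal E^+(A)$ with $U^{\gamma_A}=1$ n.e.\ on $A$, $U^{\gamma_A}\leqslant1$ on $\mathbb R^n$, and $\gamma_A(\mathbb R^n)=I(\gamma_A)=c_*(A)$. The one observation that drives the whole argument is that, since $f=-U^\delta$, the $f$-weighted potential of any $\mu\in\mathcal E^+(A)$ satisfies $U^\mu_f=U^\mu-U^{\delta^A}$ n.e.\ on $A$.

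For sufficiency together with the explicit formula, I would take $\lambda_{A,f}$ as defined by (\ref{RR}) and check that it lies in $\breve{\mathcal E}^+_f(A)$: it is concentrated on $A$ and of finite energy (both summands are), its mass equals $\delta^A(\mathbb R^n)+\eta_{A,f}\,c_*(A)=1$ by the choice (\ref{eqalt}) of $\eta_{A,f}$, and $f$ is $\lambda_{A,f}$-integrable as in Lemma~\ref{int}. Using the potential identities above, n.e.\ on $A$ one computes $U^{\lambda_{A,f}}_f=U^{\delta^A}+\eta_{A,f}U^{\gamma_A}-U^\delta=\eta_{A,f}$ (and likewise with $\lambda_{A,f}=\delta^A$, $\eta_{A,f}=0$ in the case $c_*(A)=\infty$, $\delta^A(\mathbb R^n)=1$). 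Since $\lambda_{A,f}$ has finite energy it does not charge sets of zero inner capacity, so this equality holds $\lambda_{A,f}$-a.e.\ as well; hence $c_1:=\int U^{\lambda_{A,f}}_f\,d\lambda_{A,f}=\eta_{A,f}$ and (\ref{1}) holds with equality. Theorem~\ref{th-ch2} then identifies $\lambda_{A,f}$ as the unique solution and yields at once $c_{A,f}=c_1=\eta_{A,f}$, i.e.\ (\ref{const-alt}).

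The necessity direction is the crux, and I expect it to be the main obstacle. Assuming a solution $\lambda=\lambda_{A,f}$ exists with $c_*(A)=\infty$, I must show $\delta^A(\mathbb R^n)=1$; the plan is a two-step squeeze. By Theorem~\ref{th-ch1}, $U^\lambda\geqslant U^{\delta^A}+c_{A,f}$ n.e.\ on $A$ and $U^\lambda=U^{\delta^A}+c_{A,f}$ $\lambda$-a.e. Integrating the first relation against the capacitary measure $\gamma_K$ of compact $K\subset A$ and using the balayage mass formula $\nu^K(\mathbb R^n)=\int U^{\gamma_K}\,d\nu$ together with $U^{\gamma_K}\uparrow1$ n.e.\ on $A$ as $K\uparrow A$, I pass to the limit to obtain $1\geqslant\delta^A(\mathbb R^n)+c_{A,f}\,c_*(A)$; as $c_*(A)=\infty$ this forces $c_{A,f}\leqslant0$. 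Next, integrating the two relations against $\lambda$ and against $\delta^A$ respectively gives $I(\lambda)-\langle\lambda,\delta^A\rangle=c_{A,f}$ and $\langle\lambda,\delta^A\rangle-I(\delta^A)\geqslant c_{A,f}\delta^A(\mathbb R^n)$, whence $0\leqslant\|\lambda-\delta^A\|^2\leqslant c_{A,f}\bigl(1-\delta^A(\mathbb R^n)\bigr)$. Since $c_{A,f}\leqslant0$ and $\delta^A(\mathbb R^n)\leqslant1$, the right-hand side is ${}\leqslant0$, so $\lambda=\delta^A$ by strict positive definiteness and therefore $\delta^A(\mathbb R^n)=\lambda(\mathbb R^n)=1$, establishing (\ref{bal1}). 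The delicate part here is the rigorous justification of the limit passages $\nu^K(\mathbb R^n)\to\nu^A(\mathbb R^n)$ and $c(K)\uparrow c_*(A)$, which rest on the continuity of inner balayage and capacity under $K\uparrow A$ from the cited balayage theory.

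Finally, for the alternative descriptions I would argue as follows. Characterization (iii) is immediate: any $\mu\in\mathcal E^+(A)$ with $U^\mu_f=\eta_{A,f}$ n.e.\ on $A$ has $U^\mu=U^{\lambda_{A,f}}$ n.e.\ on $A$, and since both measures are concentrated on $A$ and of finite energy, $\|\mu-\lambda_{A,f}\|^2=\int(U^\mu-U^{\lambda_{A,f}})\,d(\mu-\lambda_{A,f})=0$. For (i), given $\mu\in\Lambda_{A,f}$ of finite energy, integrating the defining inequality $U^\mu_f\geqslant\eta_{A,f}$ n.e.\ on $A$ against $\lambda_{A,f}$ yields $\langle\mu,\lambda_{A,f}\rangle\geqslant\eta_{A,f}+\int U^\delta\,d\lambda_{A,f}=I(\lambda_{A,f})$, so $\langle\mu-\lambda_{A,f},\lambda_{A,f}\rangle\geqslant0$ and hence $I(\mu)\geqslant I(\lambda_{A,f})$, with equality only for $\mu=\lambda_{A,f}$. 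For (ii), the same inequality gives $U^\mu\geqslant U^{\lambda_{A,f}}$ n.e.\ on $A$, hence $\lambda_{A,f}$-a.e.; invoking the domination principle, valid precisely because $\alpha\in(0,2]$, upgrades this to $U^\mu\geqslant U^{\lambda_{A,f}}$ everywhere on $\mathbb R^n$, which is (\ref{minpot}). It remains to prove (\ref{conv3'}): applying (\ref{const-alt}) to each compact $K\subset A$ gives $c_{K,f}=\eta_{K,f}=(1-\delta^K(\mathbb R^n))/c(K)$, which decreases as $K\uparrow A$ because the numerator decreases while the positive denominator increases, and its limit is $c_{A,f}$ by (\ref{conv3}) of Theorem~\ref{th4'}, yielding the monotone convergence $c_{K,f}\downarrow c_{A,f}$.
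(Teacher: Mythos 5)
Your proof is correct, and for the sufficiency part, formula (\ref{RR}), the identity (\ref{const-alt}), and the monotonicity (\ref{conv3'}) it follows essentially the paper's own route: verify that the candidate measure has unit mass and constant $f$-weighted potential $\eta_{A,f}$ n.e.\ on $A$, then invoke the characteristic inequalities of Theorem~\ref{th-ch2}. Where you genuinely diverge is in the necessity of (\ref{bal1}) and in the characterizations (i)--(iii). For necessity the paper argues through the extremal measure of Lemma~\ref{la1}: it decomposes the compact-set minimizers as $\lambda_{K,f}=\delta^K+\widetilde{\eta}_{K,f}\lambda_K$, shows $\lambda_K\to0$ strongly because $w(K)\downarrow 1/c_*(A)=0$, and concludes that the strong limit of any minimizing net is $\delta^A$, which cannot have unit mass. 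Your argument is more direct: from Theorem~\ref{th-ch1} you first obtain $c_{A,f}\leqslant0$ by integrating $U^{\lambda}\geqslant U^{\delta^A}+c_{A,f}$ against $\gamma_K$ and letting $c(K)\uparrow\infty$ (only the crude bound $1\geqslant c_{A,f}\,c(K)$ is needed here, so the limit passage you flag as delicate can be bypassed), and then the two integrations against $\lambda$ and against $\delta^A$ give $\|\lambda-\delta^A\|^2\leqslant c_{A,f}\bigl(1-\delta^A(\mathbb R^n)\bigr)\leqslant0$, forcing $\lambda=\delta^A$ and hence $\delta^A(\mathbb R^n)=1$. This avoids the extremal-measure machinery entirely, at the price of invoking the maximum principle $U^{\gamma_K}\leqslant1$, which is indeed available since $\alpha\leqslant2$. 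For (i)--(iii) the paper reduces everything to the balayage characterizations of Theorem~\ref{th-bal} applied to $\delta+\eta_{A,f}\gamma_A$, using $(\gamma_A)^A=\gamma_A$; you instead reprove them by hand (the energy identity for (iii), the projection inequality $\langle\mu-\lambda_{A,f},\lambda_{A,f}\rangle\geqslant0$ for (i), and the domination principle for (ii)). Both routes are sound: the paper's reduction is shorter given that Theorem~\ref{th-bal} is already on record, while your direct arguments make the proof more self-contained and show explicitly where the restriction $\alpha\leqslant2$ enters.
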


\begin{remark}
By \cite[Corollary~5.3]{Z-bal2}, the latter relation in (\ref{bal1}) is valid e.g.\ if
\[\delta(\mathbb R^n)=1\text{\ while $A$ is {\it not} inner $\alpha$-thin at infinity.}\]
Recall that according to  \cite{KM,Z-bal2}, $Q\subset\mathbb R^n$ is said to be {\it inner $\alpha$-thin at infinity} if\footnote{For a concept of {\it outer} thinness of $Q$ at infinity when $\alpha=2$, see M.~Brelot \cite[p.~313]{Brelot} as well as J.L.~Doob \cite[pp.~175--176]{Doob}. As shown in \cite[Remark~2.3]{Z-bal2}, these two concepts are, actually, different, the latter being less restrictive. But if $Q$ is Borel, then for $\alpha=2$, the concept of inner thinness, given by (\ref{sum}), coincides, in fact, with that of outer thinness introduced by Doob. Therefore, when speaking of inner $\alpha$-thinness for Borel sets, the term "inner" might be omitted.}
 \begin{equation}\label{sum}
 \sum_{k\in\mathbb N}\,\frac{c_*(Q_k)}{q^{k(n-\alpha)}}<\infty,
 \end{equation}
where $q\in(1,\infty)$ and $Q_k:=Q\cap\{x\in\mathbb R^n:\ q^k\leqslant|x|<q^{k+1}\}$; or equivalently, if either $Q$ is bounded, or  $x=0$ is an inner $\alpha$-irregular boundary point for the inverse of $Q$ with respect to $|x|=1$. (For the concept of inner $\alpha$-irregular boundary points for arbitrary $Q\subset\mathbb R^n$, see the author's recent paper \cite[Section~6]{Z-bal}; compare with N.S.~Landkof's book \cite[Section~V.1]{L}, where $Q$ was required to be Borel.)
\end{remark}

\begin{corollary}\label{cortm}
Under the assumptions of Theorem~{\rm\ref{th3'}}, if moreover {\rm(\ref{bal1})} is fulfilled,
 then $\lambda_{A,f}$ is a measure of minimum total mass in the class $\Lambda_{A,f}$, i.e.
\begin{equation}\label{eq-t-m}\lambda_{A,f}(\mathbb R^n)=\min_{\mu\in\Lambda_{A,f}}\,\mu(\mathbb R^n)\quad\bigl({}=1\bigr).\end{equation}
\end{corollary}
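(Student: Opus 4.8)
The plan is to read the minimum total mass property off the minimum potential characterization (ii) of Theorem~\ref{th3'}, converting the resulting pointwise inequality between potentials into an inequality between total masses by testing against the inner capacitary measures of large balls. Since $\lambda_{A,f}\in\Lambda_{A,f}$ and $\lambda_{A,f}(\mathbb R^n)=1$ (the latter because $\lambda_{A,f}\in\breve{\mathcal E}^+_f(A)$), the infimum of $\mu(\mathbb R^n)$ over $\mu\in\Lambda_{A,f}$ does not exceed $1$ and is attained at $\lambda_{A,f}$; hence (\ref{eq-t-m}) will follow once I verify that
\[\mu(\mathbb R^n)\geqslant1\quad\text{for every }\mu\in\Lambda_{A,f}.\]
Fixing such a $\mu$, I may assume $\mu(\mathbb R^n)<\infty$, the assertion being trivial otherwise. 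By (\ref{minpot}) in Theorem~\ref{th3'}(ii), $U^\mu\geqslant U^{\lambda_{A,f}}$ everywhere on $\mathbb R^n$.

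First I would fix, for each $r>0$, the inner capacitary measure $\gamma_{B_r}$ of the closed ball $B_r:=\{x\in\mathbb R^n:\ |x|\leqslant r\}$, normalized by $\gamma_{B_r}(\mathbb R^n)=c_*(B_r)$, so that $U^{\gamma_{B_r}}=1$ q.e.\ on $B_r$. Since $0<\alpha\leqslant2$, the Frostman maximum principle yields $0\leqslant U^{\gamma_{B_r}}\leqslant1$ everywhere on $\mathbb R^n$; moreover, the interior points of a ball being regular, $U^{\gamma_{B_r}}=1$ throughout the open ball $\{|x|<r\}$, so that for each fixed $x$ one has $U^{\gamma_{B_r}}(x)=1$ as soon as $r>|x|$, whence $U^{\gamma_{B_r}}\to1$ pointwise on $\mathbb R^n$ as $r\to\infty$. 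Integrating the inequality $U^\mu\geqslant U^{\lambda_{A,f}}$ against $\gamma_{B_r}$ and applying the Fubini--Tonelli theorem to the nonnegative kernel $\kappa_\alpha$, I obtain
\[\int U^{\gamma_{B_r}}\,d\mu=\int U^\mu\,d\gamma_{B_r}\geqslant\int U^{\lambda_{A,f}}\,d\gamma_{B_r}=\int U^{\gamma_{B_r}}\,d\lambda_{A,f}.\]

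On the left-hand side the bound $U^{\gamma_{B_r}}\leqslant1$ gives $\int U^{\gamma_{B_r}}\,d\mu\leqslant\mu(\mathbb R^n)$ for every $r$. On the right-hand side, since $0\leqslant U^{\gamma_{B_r}}\leqslant1$ and $U^{\gamma_{B_r}}\to1$ pointwise, while $\lambda_{A,f}$ is a bounded measure, Lebesgue's dominated convergence theorem yields $\int U^{\gamma_{B_r}}\,d\lambda_{A,f}\to\lambda_{A,f}(\mathbb R^n)=1$ as $r\to\infty$. Letting $r\to\infty$ in the displayed chain, valid for every $r$, I conclude $\mu(\mathbb R^n)\geqslant1$, which is the asserted (\ref{eq-t-m}).

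The crux of the argument, and its only genuinely nontrivial point, is this passage from the everywhere-inequality $U^\mu\geqslant U^{\lambda_{A,f}}$ to the mass inequality $\mu(\mathbb R^n)\geqslant\lambda_{A,f}(\mathbb R^n)$: one must recover the total mass of a measure from the size of its potential. Testing against $\gamma_{B_r}$ and letting $r\to\infty$ is precisely the device that encodes the asymptotics $|x|^{n-\alpha}U^\rho(x)\to\rho(\mathbb R^n)$ at infinity while remaining valid for measures of possibly unbounded support. The restriction $\alpha\leqslant2$ enters only through the maximum-principle bound $U^{\gamma_{B_r}}\leqslant1$; note that no finite-energy hypothesis on the competitor $\mu$ is used, in agreement with $\Lambda_{A,f}$ being defined for arbitrary $\mu\in\mathfrak M^+$.
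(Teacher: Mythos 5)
Your argument is correct and follows the same route as the paper: both reduce the claim to the everywhere-inequality $U^{\lambda_{A,f}}\leqslant U^\mu$ furnished by Theorem~\ref{th3'}(ii), and then pass from potentials to total masses. The only difference is that the paper cites the principle of positivity of mass \cite[Theorem~3.11]{FZ} for that last step, whereas you reprove it inline (correctly, for $\alpha\leqslant2$) by testing against the equilibrium measures of large balls and using the Frostman maximum principle together with dominated convergence.
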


\begin{remark}\label{t-m-nonun} The extremal property (\ref{eq-t-m}) cannot, however, serve as an alternative characterization of the minimizer $\lambda_{A,f}$, for it does not determine $\lambda_{A,f}$ uniquely within $\Lambda_{A,f}$. Indeed, take, for instance, $A:=\{|x|\geqslant 1\}$, and let $f$ be given by (\ref{f3'}) with $\delta\in\breve{\mathfrak M}^+(A^c)$. Since $A$ is {\it not} $\alpha$-thin at infinity, applying \cite[Corollary~5.3]{Z-bal2} gives
\begin{equation}\label{eq-t-m1}\delta^A(\mathbb R^n)=\delta(\mathbb R^n)=1.\end{equation}
Hence $\eta_{A,f}=0$, which in view of the equality $U^{\delta^A}=U^\delta$ n.e.\ on $A$  \cite[Eq.~(1.11)]{Z-bal2} yields $\delta,\delta^A\in\Lambda_{A,f}$. Noting that $\delta^A\ne\delta$, and taking (\ref{eq-t-m1}) into account, we therefore conclude that there are actually infinitely many measures of minimum total mass in $\Lambda_{A,f}$, for so are all the measures of the form $a\delta+b\delta^A$, where $a,b\in[0,1]$ and $a+b=1$.\end{remark}

\begin{corollary}\label{cor2}The following two assertions on the existence of $\lambda_{A,f}$ hold true.
\begin{itemize}
\item[{\rm(a)}] If $c_*(A)=\infty$ and $\delta(\mathbb R^n)<1$, then $\lambda_{A,f}$ fails to exist.
\item[{\rm(b)}] If $\delta\in\breve{\mathcal E}^+(A)$, then $\lambda_{A,f}$ does exist. Moreover, then
   $\lambda_{A,f}=\delta^A=\delta$, and hence
  Theorem~{\rm\ref{th3'}} is fully applicable to both $\lambda_{A,f}$ and $c_{A,f}$.
  \end{itemize}
\end{corollary}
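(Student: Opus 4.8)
The plan is to derive both parts directly from Theorem~\ref{th3'}, after checking that its hypotheses are in force. Since the standing assumptions of this subsection ($\alpha\in(0,2]$, $(\mathcal P_4)$, $\psi=0$, $\vartheta^+=\omega^+=0$) hold, $f$ has the form (\ref{f3'}) and Theorem~\ref{th3'} becomes applicable as soon as the mass condition (\ref{leq1}) is verified. The two facts about inner balayage I would lean on are that it does not increase total mass, i.e.\ $\delta^A(\mathbb R^n)\leqslant\delta(\mathbb R^n)$, and that it fixes measures already lying in $\mathcal E^+(A)$, i.e.\ $\delta^A=\delta$ whenever $\delta\in\mathcal E^+(A)$; both are standard properties of the inner $\alpha$-Riesz balayage established in \cite{Z-bal,Z-bal2}.

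For (a), the hypothesis $\delta(\mathbb R^n)<1$ gives (\ref{leq1}), so Theorem~\ref{th3'} applies and solvability is equivalent to (\ref{bal1}). I would then rule out both alternatives: $c_*(A)=\infty$ by assumption, while the mass inequality yields $\delta^A(\mathbb R^n)\leqslant\delta(\mathbb R^n)<1$, so $\delta^A(\mathbb R^n)\neq1$. Hence (\ref{bal1}) fails and $\lambda_{A,f}$ does not exist.

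For (b), I first note that $\delta\in\breve{\mathcal E}^+(A)$ is admissible for the weighted problem: since $I(\delta)<\infty$, the identity $\int f\,d\delta=-\int U^\delta\,d\delta=-I(\delta)$ shows that $f$ is $\delta$-integrable and $I_f(\delta)=-I(\delta)$ is finite, so $\delta\in\breve{\mathcal E}^+_f(A)$. Next, the fixed-point property gives $\delta^A=\delta$, whence $\delta^A(\mathbb R^n)=\delta(\mathbb R^n)=1$; in particular (\ref{leq1}) holds (with equality) and the second alternative in (\ref{bal1}) is satisfied. Theorem~\ref{th3'} therefore guarantees the existence of $\lambda_{A,f}$ and supplies its form (\ref{RR}). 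Finally I would read off $\lambda_{A,f}=\delta$ from (\ref{RR}): when $c_*(A)=\infty$ this is immediate, while when $c_*(A)<\infty$ the definition (\ref{eqalt}) gives $\eta_{A,f}=(1-\delta^A(\mathbb R^n))/c_*(A)=0$, so again $\lambda_{A,f}=\delta^A=\delta$. Since (\ref{bal1}) holds, all assertions of Theorem~\ref{th3'} apply to $\lambda_{A,f}$ and $c_{A,f}$; in particular the representation (\ref{const-alt}) here yields $c_{A,f}=\eta_{A,f}=0$.

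The argument is essentially bookkeeping layered on top of Theorem~\ref{th3'}, so no serious analytic obstacle arises; the only points requiring care are the two invoked balayage properties. Of these, the fixed-point identity $\delta^A=\delta$ for $\delta\in\mathcal E^+(A)$ is the more delicate one, since $A$ need not be closed. I would justify it through the characterization of $\delta^A$ (for finite energy $\delta$) as the unique element of the strong closure of $\mathcal E^+(A)$---which coincides with $\mathcal E^+(A)$ itself under $(\mathcal P_4)$---whose potential agrees with $U^\delta$ nearly everywhere on $A$; as $\delta$ already meets both requirements, it must equal $\delta^A$.
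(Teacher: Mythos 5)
Your proposal is correct and follows essentially the same route as the paper: part (a) is verbatim the paper's argument (mass does not increase under balayage, so both alternatives of (\ref{bal1}) fail), and part (b) likewise reduces to Theorem~\ref{th3'} via the fixed-point identity $\delta^A=\delta$. The only cosmetic difference is that you justify $\delta^A=\delta$ through the characterization of Theorem~\ref{th-bal}(iii$_1$) (unique measure in $\mathcal E'(A)=\mathcal E^+(A)$ with the right potential n.e.\ on $A$), whereas the paper invokes the equivalent orthogonal-projection characterization; your explicit treatment of the case $c_*(A)<\infty$ when reading $\lambda_{A,f}=\delta$ off (\ref{RR}) is a small point the paper leaves implicit.
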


\begin{remark}\label{f-stre}Corollary~\ref{cor2}(a) improves \cite[Corollary~2.6(ii)]{Dr0} by showing that the latter remains valid if the closed set $\Sigma$ involved in it, is just of infinite capacity. (Observe that the set $\Sigma$ in \cite{Dr0} was required throughout not to be $\alpha$-thin at infinity. Regarding the existence of a set with infinite capacity which is, nonetheless, $\alpha$-thin at infinity, see \cite[Remark~2.2]{Z-bal2}, cf.\ also Example~\ref{ex} below.)\end{remark}

\begin{theorem}\label{th-nonth}
 Assume that $A$ is not inner $\alpha$-thin at infinity. Then we have:
 \begin{itemize}
\item[{\rm(a$_1$)}] If $\delta(\mathbb R^n)=1$, $\lambda_{A,f}$ does exist. Moreover, since then $\delta^A(\mathbb R^n)=1$,  Theorem~{\rm\ref{th3'}} is fully applicable to both $\lambda_{A,f}$ and $c_{A,f}$. In particular,
    \begin{equation}\label{iff''}\lambda_{A,f}=\delta^A\text{ \ and \ }c_{A,f}=0.\end{equation}
\item[{\rm(b$_1$)}] Assume moreover that $f$ is l.s.c.\ on $\overline{A}$.\footnote{This occurs, for instance, if $S(\tau)\cap\overline{A}=\varnothing$, cf.\ (\ref{f3'}) and $(\mathcal P_3')$.} Then
 \begin{equation}\label{iff}
 \text{$\lambda_{A,f}$ exists}\iff\delta(\mathbb R^n)\geqslant1.
 \end{equation}
 In addition,\footnote{Compare with (\ref{iff''}).}
 \begin{equation}\label{F1}
   c_{A,f}<0\text{ \ if \  $\delta(\mathbb R^n)>1$}.
 \end{equation}
 \end{itemize}
\end{theorem}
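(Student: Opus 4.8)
The plan is to reduce each assertion to Theorem~\ref{th3'}, Corollary~\ref{cor2}, and the mass-conservation property of inner balayage onto sets that are not inner $\alpha$-thin at infinity. For (a$_1$): by the positive homogeneity $(c\delta)^A=c\,\delta^A$ of balayage together with \cite[Corollary~5.3]{Z-bal2}, a set that is not inner $\alpha$-thin at infinity satisfies $\delta^A(\mathbb R^n)=\delta(\mathbb R^n)$, so $\delta(\mathbb R^n)=1$ forces $\delta^A(\mathbb R^n)=1$. Then the second alternative in (\ref{bal1}) holds, and since also $\delta(\mathbb R^n)\leqslant1$, Theorem~\ref{th3'} applies. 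Substituting $\delta^A(\mathbb R^n)=1$ into (\ref{eqalt}) gives $\eta_{A,f}=0$, so both branches of (\ref{RR}) collapse to $\lambda_{A,f}=\delta^A$, while (\ref{const-alt}) gives $c_{A,f}=\eta_{A,f}=0$; this is (\ref{iff''}), and Theorem~\ref{th3'} is thereby fully available for $\lambda_{A,f}$ and $c_{A,f}$.

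For the necessity in (\ref{iff}) I would argue contrapositively. First I would record that finite capacity forces inner $\alpha$-thinness at infinity: if $c_*(A)<\infty$ then each annular piece $A_k$ (notation of (\ref{sum})) has $c_*(A_k)\leqslant c_*(A)$, so the series (\ref{sum}) is dominated by $c_*(A)\sum_k q^{-k(n-\alpha)}<\infty$. Contrapositively, $A$ not inner $\alpha$-thin at infinity gives $c_*(A)=\infty$. Hence, if $\delta(\mathbb R^n)<1$, Corollary~\ref{cor2}(a) applies and $\lambda_{A,f}$ fails to exist; equivalently, the existence of $\lambda_{A,f}$ forces $\delta(\mathbb R^n)\geqslant1$.

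For the sufficiency half, the key device is a Hilbert-space reformulation. For every $\mu\in\breve{\mathcal E}^+(A)$ one has $U^\delta=U^{\delta^A}$ n.e.\ on $A$ \cite[Eq.~(1.11)]{Z-bal2} and $\delta^A\in\mathcal E^+(A)$, whence $\int U^\delta\,d\mu=\langle\delta^A,\mu\rangle$ and therefore $I_f(\mu)=\|\mu-\delta^A\|^2-\|\delta^A\|^2$; in particular $\breve{\mathcal E}^+_f(A)=\breve{\mathcal E}^+(A)$. Minimizing $I_f$ thus reduces to finding the metric projection $\mu^\ast$ of $\delta^A$ onto $C:=\{\mu\in\mathcal E^+(A):\mu(\mathbb R^n)\leqslant1\}$, which is convex and strongly closed---by $(\mathcal P_4)$ together with the strong (hence vague) lower semicontinuity of total mass---and therefore strongly complete, so $\mu^\ast$ exists and is unique. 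Since $\delta(\mathbb R^n)\geqslant1$ gives $\delta^A(\mathbb R^n)\geqslant1$, I would show $\mu^\ast(\mathbb R^n)=1$ by a segment argument: were $\mu^\ast(\mathbb R^n)<1$, the measures $(1-t)\mu^\ast+t\delta^A$ would remain in $C$ for small $t>0$ while lying strictly nearer to $\delta^A$, contradicting minimality. Hence $\mu^\ast\in\breve{\mathcal E}^+(A)$ realizes $w_f(A)$ and $\lambda_{A,f}=\mu^\ast$ exists. The delicate point, and the main obstacle, is exactly this non-escape of mass when $\delta(\mathbb R^n)>1$; the projection argument disposes of it cleanly but relies on $\delta^A$ being concentrated on $A$ and on the strong closedness of $C$, and it is here that the l.s.c.\ hypothesis is convenient, ensuring good behaviour of the weighted potential on $\overline A$ and giving access to the support inequality of Theorem~\ref{th-ch1} as an alternative, more classical route.

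Finally, for (\ref{F1}) I would invoke the domination principle, available since $\alpha\leqslant2$. Assume $c_{A,f}\geqslant0$. By (\ref{t1-pr1}), $U^{\lambda_{A,f}}-U^\delta=U^{\lambda_{A,f}}_f\geqslant c_{A,f}\geqslant0$ n.e.\ on $A$, and combined with $U^\delta=U^{\delta^A}$ n.e.\ on $A$ this yields $U^{\delta^A}\leqslant U^{\lambda_{A,f}}$ n.e.\ on $A$, hence $\delta^A$-a.e. The domination principle then propagates the inequality to all of $\mathbb R^n$, and comparing masses via the asymptotics $|x|^{n-\alpha}U^\nu(x)\to\nu(\mathbb R^n)$ as $|x|\to\infty$ gives $\delta(\mathbb R^n)=\delta^A(\mathbb R^n)\leqslant\lambda_{A,f}(\mathbb R^n)=1$, contradicting $\delta(\mathbb R^n)>1$. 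Therefore $c_{A,f}<0$, which proves (\ref{F1}) and completes the theorem.
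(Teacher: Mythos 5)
Your treatment of (a$_1$) and of the necessity half of (\ref{iff}) matches the paper: mass conservation under inner balayage onto sets that are not inner $\alpha$-thin at infinity \cite[Corollary~5.3]{Z-bal2} gives $\delta^A(\mathbb R^n)=\delta(\mathbb R^n)=1$, after which Theorem~\ref{th3'} yields (\ref{iff''}); and the necessity of $\delta(\mathbb R^n)\geqslant1$ is exactly Corollary~\ref{cor2}(a) once one notes $c_*(A)=\infty$. For the sufficiency when $\delta(\mathbb R^n)>1$ you depart substantially from the paper. The paper works with the extremal measure $\xi_{A,f}$ (the strong limit of minimizing nets), proves in Lemma~\ref{LLL} --- and this is precisely where the lower semicontinuity of $f$ enters, through the inequality $U^\xi_f\leqslant C_\xi$ on $S(\xi)$ inherited from the compact subproblems --- that $U^\xi_f=C_\xi$ holds $\xi$-a.e., shows $C_\xi<0$ via the strengthened principle of positivity of mass \cite[Theorem~1.2]{Z-Deny}, and then reads off $\xi(\mathbb R^n)=1$ from (\ref{Xi}). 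You instead rewrite $I_f(\mu)=\|\mu-\delta^A\|^2-\|\delta^A\|^2$ on $\mathcal E^+(A)$ (valid, by Theorem~\ref{th-bal}(iii$_1$) and Lemma~\ref{l-negl}) and project $\delta^A$ onto the strongly closed convex set $C:=\{\mu\in\mathcal E^+(A):\mu(\mathbb R^n)\leqslant1\}$, pinning the mass of the projection at $1$ by a segment argument. This is a genuinely different and in some respects cleaner way to rule out escape of mass, and, notably, it does not use the lower semicontinuity of $f$ at all.

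There is, however, one genuine gap. The hypothesis $\delta(\mathbb R^n)>1$ does not exclude $\delta(\mathbb R^n)=\infty$: condition $(\mathcal P_2')$ only requires $\tau\in\mathcal E^+(\mathbb R^n)$, and measures of finite energy may well have infinite total mass. In that case $\delta^A(\mathbb R^n)=\infty$ too, and your segment $(1-t)\mu^\ast+t\delta^A$ leaves $C$ for every $t>0$, so the argument as written does not close; the paper's proof covers this case automatically. A repair is available within your framework: if $\mu^\ast(\mathbb R^n)<1$, the first-order condition for the projection gives $\langle\delta^A-\mu^\ast,\chi\rangle\leqslant0$ for all $\chi\in\mathcal E^+(A)$ of finite mass, whence $U^{\mu^\ast}\geqslant U^{\delta^A}=U^\delta$ n.e.\ on $A$, and the strengthened positivity-of-mass principle then forces $\mu^\ast(\mathbb R^n)\geqslant\delta(\mathbb R^n)>1$, a contradiction --- but at that point you have essentially rejoined the paper's mechanism. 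Two smaller points: in the proof of (\ref{F1}), the pointwise asymptotics $|x|^{n-\alpha}U^\nu(x)\to\nu(\mathbb R^n)$ is delicate for measures with unbounded support; the correct tool is the principle of positivity of mass \cite[Theorem~3.11]{FZ}, as used by the paper for Corollary~\ref{cortm} (the paper itself avoids the detour through the domination principle by applying the strengthened version directly to the inequality n.e.\ on $A$). Finally, your closing remark that the l.s.c.\ hypothesis is merely ``convenient'' leaves its role unresolved; in the paper it is what makes (\ref{pot-extr2'}) available, which is the engine of its existence proof.
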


\begin{remark}The requirement on $A$ of not being inner $\alpha$-thin at infinity is important for the validity of both (a$_1$) and (b$_1$) in Theorem~\ref{th-nonth} (see Theorem~\ref{th-th}).\end{remark}

Let $F\subset\mathbb R^n$ be closed and $\alpha$-thin at infinity. If moreover $\alpha=2$, then there exists the {\it unique} connected component $\Delta_F$ of the (open) set $F^c$ such that $(\Delta_F)^c$ $({}\supset F)$ still remains (closed and) $2$-thin at infinity.\footnote{Indeed, for $F$ compact, $\Delta_F$ is, in fact, the (unique) unbounded connected component of $F^c$. For $F$ noncompact, the origin $x=0$ is $2$-irregular for $F^*$, the inverse of $F\cup\{\infty_{\mathbb R^n}\}$ with respect to $|x|=1$. (Here $\infty_{\mathbb R^n}$ denotes the Alexandroff point of $\mathbb R^n$.) By \cite[Section~VIII.6, Remark~3]{Brelo1}, there exists, therefore, a unique connected component $G$ of the (open) set $(F^*)^c$ such that $x=0$ is $2$-ir\-reg\-ular for $G^c$, and the inverse $\Delta_F$ of this $G$ with respect to $|x|=1$ is as claimed.} For the given $F$, denote
\begin{equation}\label{Om}
\Omega_F=\left\{
\begin{array}{cll} F^c&\text{if}&\alpha<2,\\
\Delta_F&\text{if}&\alpha=2.\\ \end{array} \right.
\end{equation}

\begin{theorem}\label{th-th} Under the hypotheses listed at the beginning of this subsection, assume moreover that $c_*(A)=\infty$, and that $\overline{A}$ is $\alpha$-thin at infinity. Then Problem~{\rm\ref{pr-main}} is unsolvable whenever the following two assumptions are fulfilled:
\begin{equation*}\label{nons}
 \delta(\mathbb R^n)\leqslant1,\quad\delta(\Omega_{\overline{A}})>0,
\end{equation*}
$\Omega_{\overline{A}}$ being introduced by {\rm(\ref{Om})} with $F:=\overline{A}$.
\end{theorem}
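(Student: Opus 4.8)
The plan is to turn unsolvability into a \emph{strict} loss of mass under inner balayage, and then to detect that loss by means of the Kelvin transform. Since $\delta(\mathbb R^n)\leqslant1$ and $(\mathcal P_4)$ holds, Theorem~\ref{th3'} applies and tells us that Problem~\ref{pr-main} is solvable if and only if (\ref{bal1}) holds; as $c_*(A)=\infty$, the first alternative in (\ref{bal1}) is excluded, so solvability is equivalent to $\delta^A(\mathbb R^n)=1$. Because inner balayage never increases total mass, $\delta^A(\mathbb R^n)\leqslant\delta(\mathbb R^n)\leqslant1$, and it therefore suffices to prove the strict inequality $\delta^A(\mathbb R^n)<1$. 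Since inner balayage is monotone with respect to set inclusion (a larger target set retains at least as much mass), $\delta^A(\mathbb R^n)\leqslant\delta^{\overline A}(\mathbb R^n)$, so I may replace $A$ by the closed set $F:=\overline A$ and reduce everything to showing $\delta^{\overline A}(\mathbb R^n)<1$.

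Writing $\varepsilon_x$ for the unit Dirac measure at $x$, I would invoke the integral representation $\delta^F=\int\varepsilon_x^F\,d\delta(x)$ of inner balayage, available in the theory of \cite{Z-bal,Z-bal2}, which gives
\[\delta^F(\mathbb R^n)=\int\varepsilon_x^F(\mathbb R^n)\,d\delta(x).\]
As $\varepsilon_x^F(\mathbb R^n)\leqslant1$ for every $x$, the whole matter reduces to the following local claim: if $F$ is closed and $\alpha$-thin at infinity, then $\varepsilon_x^F(\mathbb R^n)<1$ for every $x\in\Omega_F$. Granting this, the integrand is $\leqslant1$ everywhere and $<1$ on $\Omega_{\overline A}$, a set of positive $\delta$-measure by hypothesis $\delta(\Omega_{\overline A})>0$; hence $\delta^F(\mathbb R^n)<\delta(\mathbb R^n)\leqslant1$, which is exactly what is needed.

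To prove the local claim I would pass to the inversion $x\mapsto x^*:=x/|x|^2$ in the unit sphere and the associated Kelvin transform $\nu\mapsto\nu^*$. Under it the kernel transforms by $\kappa_\alpha(x^*,y^*)=|x|^{n-\alpha}|y|^{n-\alpha}\kappa_\alpha(x,y)$, balayage commutes with inversion (so $(\varepsilon_x^F)^*=(\varepsilon_x^*)^{F^*}$), and the total mass of a measure is recovered as the value of its transformed potential at the origin; explicitly $\varepsilon_x^F(\mathbb R^n)=|x|^{\alpha-n}U^{(\varepsilon_{x^*})^{F^*}}(0)$ while $1=|x|^{\alpha-n}U^{\varepsilon_{x^*}}(0)$. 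Thus $\varepsilon_x^F(\mathbb R^n)<1$ is equivalent to the strict drop $U^{(\varepsilon_{x^*})^{F^*}}(0)<U^{\varepsilon_{x^*}}(0)$ of the potential under balayage at the origin. Now $F$ being $\alpha$-thin at infinity is precisely the statement that $F^*$ is thin at $0$, i.e.\ that $0$ is an inner $\alpha$-irregular boundary point of $F^*$ (cf.\ \cite[Section~6]{Z-bal}); this irregularity is exactly what permits such a strict drop, and $x^*$ lies off $F^*$ because $x\in\Omega_F\subset F^c$.

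The main obstacle is this last strict inequality, and it is precisely here that the dichotomy in the definition (\ref{Om}) of $\Omega_F$ is forced. For $\alpha<2$ the underlying process has jumps, so from any $x^*\in(F^*)^c$ the origin can be approached without meeting $F^*$, and the strict decrease of $U^{(\varepsilon_0)^{F^*}}$ below $U^{\varepsilon_0}$ occurs throughout $\Omega_F=F^c$; for $\alpha=2$ the motion is continuous, the decrease is felt only across the single connected component of $(F^*)^c$ on whose boundary $0$ persists as an irregular point, and inverting that component back recovers exactly $\Omega_F=\Delta_F$. Making this rigorous requires the fine-topology description of the coincidence set $\{U^{(\varepsilon_0)^{F^*}}=U^{\varepsilon_0}\}$ together with the connected-component analysis behind the construction of $\Delta_F$; once the region of strict decrease is identified with $\Omega_F$ — equivalently, once one appeals to the corresponding mass-loss result of the inner balayage theory \cite{Z-bal,Z-bal2} — the argument closes.
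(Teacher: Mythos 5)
Your proposal is correct and takes essentially the same route as the paper: Theorem~\ref{th3'} reduces the question to showing $\delta^A(\mathbb R^n)<1$, and that strict inequality is extracted from the mass-loss property of inner balayage onto the thin-at-infinity set $\overline{A}$ for the part of $\delta$ charging $\Omega_{\overline{A}}$, combined with transitivity and mass non-increase of balayage --- the paper cites \cite[Theorem~8.7]{Z-bal} for this, applied to $\delta|_{\Omega_{\overline{A}}}$ wholesale rather than disintegrated into Dirac masses. Your Kelvin-transform paragraph is a (plausible) sketch of why that cited mass-loss result holds, but, as you acknowledge, you too ultimately defer to the balayage theory of \cite{Z-bal,Z-bal2} for the decisive strict drop, so the two arguments rest on the same key external ingredient.
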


\begin{remark}
As seen from either of Theorems~\ref{th2'} and \ref{th3'}, the assumption $c_*(A)=\infty$ is necessary for the validity of both Corollary~\ref{cor2}(a) and Theorem~\ref{th-th}.
\end{remark}

\subsection{Convergence results} Referring to Theorems~\ref{th4'}--\ref{th4'c2} for convergence results for $\alpha\in(0,n)$, assume now that $\alpha\leqslant2$, and that $f$ is of form (\ref{f3'}).

\begin{theorem}\label{conv6}
Let $(A_j)$ be an increasing sequence of closed set $A_j\subset\mathbb R^n$ with the union $A$ which is not $\alpha$-thin at infinity, and let each $A_j$ be either of finite capacity, or not $\alpha$-thin at infinity. Assume that either $\delta(\mathbb R^n)=1$, or $U^\delta$ is continuous on $A$ while $\delta(\mathbb R^n)>1$. Then {\rm(\ref{cont-count1})}--{\rm(\ref{cont-count13})} do hold {\rm(}the minimizers $\lambda_{A_j,f}$ and $\lambda_{A,f}$ do exist{\rm)}.\footnote{In the case where $c(A)<\infty$, limit relations (\ref{cont-count1})--(\ref{cont-count13}) hold true under much more general assumptions~--- namely, for any $\alpha\in(0,n)$ and any external field $f$ of form (\ref{f1}) (see footnote~\ref{th4'cf}). Here $c(A):=c_*(A)=c^*(A)$, Borel sets being {\it capacitable} \cite[Theorem~2.8]{L}.}
\end{theorem}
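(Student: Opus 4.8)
The plan is to reduce Theorem~\ref{conv6} to the conditional statement of Theorem~\ref{th4'c}. Indeed, each $A_j$ is closed, hence universally measurable, and so is $A=\bigcup_jA_j$ (an $F_\sigma$-set); thus Theorem~\ref{th4'c} applies and yields (\ref{cont-count1}) unconditionally, while (\ref{cont-count12})--(\ref{cont-count13}) follow from it as soon as all the minimizers $\lambda_{A_j,f}$ and $\lambda_{A,f}$ are shown to exist. The whole substance of the theorem is therefore the existence of these minimizers.

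First I would settle the $\lambda_{A_j,f}$. Being closed, $A_j$ fulfills $(\mathcal P_4)$, and by hypothesis it is either of finite capacity or not inner $\alpha$-thin at infinity. In the former subcase Theorem~\ref{th2'} applies at once. In the latter, I invoke Theorem~\ref{th-nonth}: if $\delta(\mathbb R^n)=1$ then part~(a$_1$) gives $\lambda_{A_j,f}$, whereas if $\delta(\mathbb R^n)>1$ I use part~(b$_1$), whose lower semicontinuity requirement holds because $U^\delta$ is continuous on $A\supseteq A_j$, so that $f=-U^\delta$ is continuous, hence l.s.c., on the closed set $\overline{A_j}=A_j$; since $\delta(\mathbb R^n)\geqslant1$, (\ref{iff}) secures existence. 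Thus $\lambda_{A_j,f}$ exists for every $j$.

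The crux is the existence of $\lambda_{A,f}$. When $\delta(\mathbb R^n)=1$ this is immediate from Theorem~\ref{th-nonth}(a$_1$), as $A$ is not $\alpha$-thin at infinity (and then $\lambda_{A,f}=\delta^A$, $c_{A,f}=0$). The genuine case is $\delta(\mathbb R^n)>1$, which I would handle by a limiting argument. For $i\leqslant j$ the midpoint $\tfrac12(\lambda_{A_i,f}+\lambda_{A_j,f})$ lies in $\breve{\mathcal E}^+_f(A_j)$, so the convexity of this class and the parallelogram identity in $\mathcal E$ give
\[\tfrac12\,\|\lambda_{A_i,f}-\lambda_{A_j,f}\|^2\leqslant w_f(A_i)-w_f(A_j).\]
Since $w_f(A_j)\downarrow w_f(A)$ by (\ref{cont-count1}), the right-hand side tends to $0$, so $(\lambda_{A_j,f})$ is strong Cauchy and, by the completeness of $\mathcal E^+$, converges both strongly and vaguely to some $\lambda_0\in\mathcal E^+$. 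As $\lambda_{A_j,f}\in\mathcal E^+(A_j)\subset\mathcal E^+(A)$ and $\mathcal E^+(A)$ is strongly closed by $(\mathcal P_4)$, we have $\lambda_0\in\mathcal E^+(A)$.

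It remains to check that $\lambda_0\in\breve{\mathcal E}^+_f(A)$ and $I_f(\lambda_0)=w_f(A)$, which by uniqueness identify $\lambda_0$ with $\lambda_{A,f}$. The weighted-energy part passes to the limit readily: strong convergence gives $\langle\tau,\lambda_{A_j,f}\rangle\to\langle\tau,\lambda_0\rangle$ and $\|\lambda_{A_j,f}\|\to\|\lambda_0\|$, while $U^\sigma$ is bounded and continuous near $\overline A$ (as $d(S_\sigma,\overline A)=d(S_\sigma,A)>0$), so its integrals also converge once no mass escapes. The point I expect to be the main obstacle is precisely this mass conservation $\lambda_0(\mathbb R^n)=1$: a vague limit of probability measures can a priori lose mass at infinity. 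To exclude this I would exploit that $A$ is not inner $\alpha$-thin at infinity~--- equivalently, that inner balayage onto $A$ preserves total mass (\cite[Corollary~5.3]{Z-bal2})~--- together with the continuity (and the decay at infinity) of $U^\delta$ on $A$, which prevents the strongly attractive field from being ``cheated'' by sending charge off to infinity. Once $\lambda_0(\mathbb R^n)=1$ is in hand, $\lambda_0=\lambda_{A,f}$ follows, all minimizers exist, and Theorem~\ref{th4'c} completes the proof by furnishing (\ref{cont-count12})--(\ref{cont-count13}).
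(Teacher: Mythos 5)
Your reduction to Theorem~\ref{th4'c} is exactly the paper's strategy, and your treatment of the $\lambda_{A_j,f}$ (Theorem~\ref{th2'} in the finite-capacity case, Theorem~\ref{th-nonth} otherwise, with $(\mathcal P_4)$ supplied by closedness via Theorem~\ref{l-quasi}) as well as of $\lambda_{A,f}$ when $\delta(\mathbb R^n)=1$ matches the paper. The difference, and the problem, is the case $\delta(\mathbb R^n)>1$. Your Cauchy argument correctly produces a strong and vague limit $\lambda_0\in\mathcal E^+(A)$ of the minimizing sequence $(\lambda_{A_j,f})$, but the decisive step $\lambda_0(\mathbb R^n)=1$ is only announced, not proved: you say you would ``exploit'' non-thinness at infinity and the continuity of $U^\delta$ to prevent mass from escaping, without exhibiting the mechanism. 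That is precisely the hard point. Note that your $\lambda_0$ is nothing but the extremal measure $\xi_{A,f}$ of Lemma~\ref{la1}, and by Lemma~\ref{LL} the equality $\xi_{A,f}(\mathbb R^n)=1$ is \emph{equivalent} to the solvability you are trying to establish --- so the argument as written is circular at its core unless that equality is actually derived.

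The paper closes this case by applying Theorem~\ref{th-nonth}(b$_1$) directly to $A$ (legitimate since $A$ is not $\alpha$-thin at infinity and $f=-U^\delta$ is l.s.c.\ thanks to the assumed continuity of $U^\delta$), and the proof of that theorem contains exactly the missing mechanism, which is not a soft compactness or decay argument: one first shows $C_\xi:=\int U^{\xi}_f\,d\xi<0$, because $C_\xi\geqslant0$ would give $U^\xi\geqslant U^\delta$ n.e.\ on $A$ and then the strengthened principle of positivity of mass (valid since $A$ is not inner $\alpha$-thin at infinity) would force $\xi(\mathbb R^n)\geqslant\delta(\mathbb R^n)>1$, contradicting $\xi(\mathbb R^n)\leqslant1$; one then uses the relation $U^\xi_f=C_\xi$ $\xi$-a.e.\ (Lemma~\ref{LLL}, and this is where lower semicontinuity of $f$ is indispensable) to integrate and obtain $C_\xi=C_\xi\,\xi(\mathbb R^n)$, whence $\xi(\mathbb R^n)=1$ since $C_\xi\neq0$. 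If you want to keep your limiting setup, you must reproduce this two-step argument for $\lambda_0=\xi_{A,f}$; otherwise simply cite Theorem~\ref{th-nonth}(b$_1$) for $A$ itself, as the paper does, and the Cauchy construction becomes unnecessary.
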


\begin{theorem}\label{conv7}
Consider a decreasing sequence $(A_j)$ of quasiclosed $A_j$ with the intersection $A$ which is not inner $\alpha$-thin at infinity, and let $f$ be of form {\rm(\ref{f3'})} with $d(S_\sigma,A_1)>0$ and $\delta(\mathbb R^n)=1$. Then {\rm(\ref{cont-count12})}--{\rm(\ref{cont-count1d})} hold true. Furthermore,
\begin{equation}\label{cont-count1du}
 U^{\lambda_{A_j,f}}\downarrow U^{\lambda_{A,f}}\text{ \ pointwise on $\mathbb R^n$ as $j\to\infty$}.
\end{equation}
\end{theorem}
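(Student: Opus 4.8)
The plan is to reduce the entire statement to the strong convergence of the inner balayage measures $\delta^{A_j}\to\delta^A$, and then read off the four asserted limit relations. First I would check that Theorem~\ref{th-nonth}(a$_1$) applies to every set in sight. Since $A\subset A_j\subset A_1$, the hypothesis $d(S_\sigma,A_1)>0$ yields $(\mathcal P_3')$ for each $A_j$ and for $A$; each $A_j$ is quasiclosed, so $(\mathcal P_4)$ holds for it, and $(\mathcal P_4)$ holds for $A$ as well because $\mathcal E^+(A)=\bigcap_j\mathcal E^+(A_j)$ (a measure concentrated on every $A_j$ is concentrated on $\bigcap_jA_j=A$), an intersection of strongly closed cones. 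Finally, monotonicity of inner $\alpha$-thinness together with $A\subset A_j$ shows that no $A_j$ is inner $\alpha$-thin at infinity. Hence, by Theorem~\ref{th-nonth}(a$_1$), all minimizers exist and
\[\lambda_{A_j,f}=\delta^{A_j},\quad c_{A_j,f}=0,\quad \delta^{A_j}(\mathbb R^n)=1,\]
and likewise $\lambda_{A,f}=\delta^A$, $c_{A,f}=0$, $\delta^A(\mathbb R^n)=1$. This already gives (\ref{cont-count13}). Using $U^{\delta^{A_j}}=U^\delta$ $\delta^{A_j}$-a.e.\ (the balayage identity holds n.e.\ on $A_j$, and $\delta^{A_j}$ charges no set of zero capacity), a direct computation yields $w_f(A_j)=\|\delta^{A_j}\|^2-2\langle\delta,\delta^{A_j}\rangle=-\|\delta^{A_j}\|^2$, and similarly $w_f(A)=-\|\delta^A\|^2$; thus (\ref{cont-count1d}) will follow once $\|\delta^{A_j}\|\to\|\delta^A\|$ is known.

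The core step is the strong convergence. For $i\geqslant j$ one has $A_i\subset A_j$, so $U^{\delta^{A_j}}=U^\delta$ n.e.\ on $A_i$, whence $\langle\delta^{A_j},\delta^{A_i}\rangle=\int U^{\delta^{A_j}}\,d\delta^{A_i}=\int U^\delta\,d\delta^{A_i}=\|\delta^{A_i}\|^2$. Since balayage onto the smaller set does not increase energy, $(\|\delta^{A_j}\|^2)_j$ is nonincreasing and bounded below by $\|\delta^A\|^2$, hence convergent, and therefore
\[\|\delta^{A_j}-\delta^{A_i}\|^2=\|\delta^{A_j}\|^2-\|\delta^{A_i}\|^2\to0\quad(i,j\to\infty).\]
By strong completeness of $\mathcal E^+$, $\delta^{A_j}$ converges strongly, and hence vaguely, to some $\lambda_0\in\mathcal E^+$. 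To identify $\lambda_0$, I note that $\lambda_0\in\mathcal E^+(A_j)$ for every $j$ (each $\mathcal E^+(A_j)$ is strongly closed and contains the tail $\delta^{A_i}$, $i\geqslant j$), so $\lambda_0\in\bigcap_j\mathcal E^+(A_j)=\mathcal E^+(A)$. Moreover $U^{\delta^{A_j}}=U^\delta$ n.e.\ on $A$ for every $j$, and the strengthened countable subadditivity of inner capacity gives $\lim_jU^{\delta^{A_j}}=U^\delta$ n.e.\ on $A$; since strong convergence forces $U^{\delta^{A_j}}\to U^{\lambda_0}$ q.e.\ along a subsequence, we obtain $U^{\lambda_0}=U^\delta$ n.e.\ on $A$. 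By the characterization (iii) in Theorem~\ref{th3'} (with $\eta_{A,f}=0$), $\lambda_0$ is the unique such measure, i.e.\ $\lambda_0=\delta^A=\lambda_{A,f}$. This establishes (\ref{cont-count12}), and with it the remaining limits (\ref{cont-count1d}) and (\ref{cont-count13}).

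It remains to upgrade the convergence of potentials to the pointwise-everywhere statement (\ref{cont-count1du}), which I expect to be the main obstacle. Because $A\subset A_j$, the potentials satisfy $U^{\delta^{A_j}}\geqslant U^{\delta^A}$ everywhere and decrease in $j$, so $U^{\delta^{A_j}}\downarrow g$ with $g\geqslant U^{\delta^A}$ everywhere and, a priori, $g=U^{\delta^A}$ only n.e.; the difficulty is to exclude a strict excess on the (polar) exceptional set. My plan is to pass through balayage of unit point masses: for fixed $x$, the reciprocity of inner balayage gives $U^{\delta^{A_j}}(x)=\int U^{\varepsilon_x^{A_j}}\,d\delta$, where $\varepsilon_x^{A_j}$ denotes the inner balayage of the Dirac measure $\varepsilon_x$ onto $A_j$, and as $A_j\downarrow A$ the potentials $U^{\varepsilon_x^{A_j}}$ decrease to a limit that equals $U^{\varepsilon_x^A}$ off a polar set $N$. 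Monotone convergence then yields $g(x)=\int(\lim_jU^{\varepsilon_x^{A_j}})\,d\delta$, and the proof reduces to showing that $\delta$ does not charge $N$: this is exactly where the two structural hypotheses enter, namely that $\tau\in\mathcal E^+$ charges no set of zero capacity, while $d(S_\sigma,A_1)>0$ keeps $\sigma$ at positive distance from the relevant exceptional set, which lies in $\overline{A_1}$. Establishing the decreasing convergence $U^{\varepsilon_x^{A_j}}\downarrow U^{\varepsilon_x^A}$ n.e.\ and pinning down the location of $N$ is the delicate point, to be handled with the inner balayage machinery of \cite{Z-bal,Z-bal2} available for $\alpha\leqslant2$.
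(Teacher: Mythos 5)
Your treatment of \eqref{cont-count12}, \eqref{cont-count13}, and \eqref{cont-count1d} is correct, and it takes a more self-contained route than the paper. Both arguments begin by invoking Theorem~\ref{th-nonth}(a$_1$) to get $\lambda_{A_j,f}=\delta^{A_j}$, $\lambda_{A,f}=\delta^A$, $c_{A_j,f}=c_{A,f}=0$ (the paper secures $(\mathcal P_4)$ for $A$ via Fuglede's result that a countable intersection of quasiclosed sets is quasiclosed, whereas you observe directly that $\mathcal E^+(A)=\bigcap_j\mathcal E^+(A_j)$ is strongly closed --- both are fine). From there the paper delegates everything to an external convergence theorem for balayage onto decreasing sets, while you prove the strong Cauchy property by hand from $\langle\delta^{A_j},\delta^{A_i}\rangle=\|\delta^{A_i}\|^2$ and the monotonicity of $\|\delta^{A_j}\|^2$, and then identify the limit through Theorem~\ref{th-bal}(iii$_1$). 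That is a legitimate and arguably more transparent derivation of the first three relations.

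The proof of \eqref{cont-count1du}, however, has a genuine gap, and you have correctly located it without closing it. Your reciprocity plan $U^{\delta^{A_j}}(x)=\int U^{\varepsilon_x^{A_j}}\,d\delta$ merely transfers the ``n.e.\ versus everywhere'' difficulty from $\delta$ to the Dirac measures: the statement that $U^{\varepsilon_x^{A_j}}$ decreases to $U^{\varepsilon_x^{A}}$ off a polar set $N$, and that $N\subset\overline{A_1}$, is itself an instance of the very convergence you are trying to prove, and neither claim is established (note also that $\varepsilon_x^{A_j}$ need not have finite energy, so your strong-convergence mechanism is unavailable at that level, and the dominated/monotone convergence step fails at points $x$ with $U^{\delta^{A_1}}(x)=+\infty$). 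The paper's resolution is different and is precisely where the hypothesis $d(S_\sigma,A_1)>0$ earns its keep: by ``balayage with a rest'' one writes $\delta^{A_j}=\bigl(\delta^{\overline{A_1}}\bigr)^{A_j}$ and $\delta^{A}=\bigl(\delta^{\overline{A_1}}\bigr)^{A}$, where $\delta^{\overline{A_1}}\in\mathcal E^+$ has \emph{finite energy} by the estimate behind \eqref{del-f}; one then applies \cite[Theorem~4.10]{Z-arx1}, which, for a finite-energy measure swept onto a decreasing sequence of sets, yields simultaneously the strong and vague convergence and the \emph{everywhere} pointwise decreasing convergence of the potentials. You should either quote that theorem after performing the same reduction to $\delta^{\overline{A_1}}$, or supply an independent proof of the everywhere-pointwise statement; as written, \eqref{cont-count1du} is not proved.
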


\subsection{A description of $S(\lambda_{A,f})$}\label{sec-descr} The {\it reduced kernel} $\check{A}$ of $A$ is the set of all $x\in A$ with $c_*(B(x,r)\cap A)>0$ for any $r>0$, where $B(x,r):=\{y\in\mathbb R^n:\ |y-x|<r\}$, see \cite[p.~164]{L}. Under the assumptions listed at the beginning of Section~\ref{subs-main2}, a description of the support of the minimizer $\lambda_{A,f}$
is given in Theorems~\ref{th-desc} and \ref{th-desc2}.

\begin{theorem}\label{th-desc} Let $A$ be closed, $A=\check{A}$,
$\delta|_{A^c}\ne0$, and let {\rm(a$_2$)} or {\rm(b$_2$)} occur:
\begin{itemize}
  \item [{\rm(a$_2$)}] $c(A)<\infty$ and $\delta(\mathbb R^n)\leqslant1$. If $\alpha=2$, assume moreover that $A^c$ is connected.
  \item [{\rm(b$_2$)}] $A$ is not $\alpha$-thin at infinity, and $\delta(\mathbb R^n)=1$.
\end{itemize}
If\/ $D$ denotes the union of all connected components\/ $D_i$ of\/ $A^c$ with\/ $\delta|_{D_i}>0$, then
\begin{equation}\label{det5}
 S(\lambda_{A,f})=\left\{
\begin{array}{cll} A&\text{if}&\alpha<2,\\
S(\delta|_A)\cup\partial_{\mathbb R^n}D&\text{if}&\alpha=2.\\ \end{array} \right.
\end{equation}
\end{theorem}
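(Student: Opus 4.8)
The plan is to determine $S(\lambda_{A,f})$ by combining the explicit form of the minimizer from Theorem~\ref{th3'} with the description of supports of inner balayage measures. Under either hypothesis (a$_2$) or (b$_2$), Theorem~\ref{th3'} applies and gives $\lambda_{A,f}=\delta^A+\eta_{A,f}\gamma_A$ (with $\eta_{A,f}=0$ in case (b$_2$), where $\delta^A(\mathbb R^n)=1$, so that $\lambda_{A,f}=\delta^A$). Since $A=\check A$ and $A$ is closed, the capacitary measure $\gamma_A$ is supported on all of $A$ when $c(A)<\infty$ (this is a standard property of the equilibrium measure on a closed set equal to its reduced kernel), so in case (a$_2$) the term $\eta_{A,f}\gamma_A$ already forces $S(\lambda_{A,f})\supset A$ as soon as $\eta_{A,f}>0$. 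When $\delta(\mathbb R^n)=1$ the capacitary term drops out, so the support is governed entirely by the balayage $\delta^A$. The whole problem thus reduces to describing $S(\delta^A)$.

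For the case $\alpha<2$, I would show $S(\lambda_{A,f})=A$ by arguing that $\delta^A$ charges every relatively open portion of $A$. The key input is the hypothesis $\delta|_{A^c}\neq0$, ensuring a genuine balayage contribution, together with $A=\check A$, which guarantees no point of $A$ is capacity-negligible in every neighborhood. For $\alpha<2$ the $\alpha$-Riesz kernel satisfies the domination and maximum principles in the strong form needed, and the inner balayage $\delta^A$ of a measure with mass near $A$ spreads out so that its support fills $\check A=A$; I would invoke the support description for inner balayage from \cite{Z-bal,Z-arx} to conclude $S(\delta^A)=A$, and then add the capacitary part (supported on $A$) without changing the support.

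For $\alpha=2$ the Newtonian case is more delicate, and this is where I expect the main obstacle to lie. Here balayage onto $A$ of the part of $\delta$ sitting in a component $D_i$ of $A^c$ lands on the boundary $\partial_{\mathbb R^n}D_i$, by the classical description of Newtonian sweeping-out (the swept measure of $\delta|_{D_i}$ is carried by the boundary of the domain from which we sweep). This accounts for the term $\partial_{\mathbb R^n}D$, while the part $\delta|_A$ already sitting on $A$ contributes $S(\delta|_A)$. The connectedness assumption on $A^c$ in (a$_2$) and the thinness hypothesis in (b$_2$) are exactly what is needed to control the behavior at infinity and to ensure the relevant component structure is well-defined via $\Omega_{\overline A}$ and the unique component $\Delta_{\overline A}$ from \eqref{Om}. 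The hard part will be proving the reverse inclusion $S(\delta^A)\subset S(\delta|_A)\cup\partial_{\mathbb R^n}D$: one must verify that $\delta^A$ puts no mass on the interior of $A$ away from $S(\delta|_A)$, which requires the Newtonian fact that sweeping is harmonic (hence generates no new support) in the complement of the source and of the reflecting boundary. I would establish this by a potential-theoretic argument: on the interior of $A$ minus $S(\delta|_A)\cup\partial D$, the potential $U^{\delta^A}$ is harmonic and coincides with a harmonic function matching $U^\delta$, so $\delta^A$ vanishes there by the converse mean-value property. Assembling the two inclusions yields \eqref{det5}.
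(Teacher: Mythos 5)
Your overall strategy coincides with the paper's: both reduce the problem to the representation $\lambda_{A,f}=\delta^A+\eta_{A,f}\gamma_A$ (resp.\ $\lambda_{A,f}=\delta^A$) supplied by Theorem~\ref{th3'}, note that $(\delta|_A)^A=\delta|_A$ so that the part of $\delta$ already on $A$ survives unchanged, and then invoke descriptions of the supports of the capacitary measure and of the swept measure (the paper simply cites \cite[Theorems~7.2, 8.5]{Z-bal} for these, where you sketch the underlying anti-locality/harmonicity arguments).

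There is, however, one concrete error in your treatment of the capacitary term. You assert that, since $A=\check{A}$ is closed and $c(A)<\infty$, ``the capacitary measure $\gamma_A$ is supported on all of $A$,'' presented as valid throughout. This is true only for $\alpha<2$; for $\alpha=2$ the equilibrium measure of, say, a closed ball is carried by the bounding sphere, and in general $S(\gamma_A)=\partial_{\mathbb R^n}A^c$ when $A^c$ is connected (cf.\ \cite[Section~II.3.13]{L}). Your subsequent $\alpha=2$ analysis addresses only $S(\delta^A)$, on the grounds that the capacitary term drops out when $\delta(\mathbb R^n)=1$; but the term vanishes precisely when $\delta^A(\mathbb R^n)=1$, not when $\delta(\mathbb R^n)=1$, and in case (a$_2$) one can well have $\eta_{A,f}>0$ (e.g.\ whenever $\delta(\mathbb R^n)<1$, or even with $\delta(\mathbb R^n)=1$, since $c(A)<\infty$ forces $A$ to be $2$-thin at infinity and balayage may then lose mass). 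In that situation your claim $S(\gamma_A)=A$ would yield $S(\lambda_{A,f})\supset A$, contradicting (\ref{det5}) whenever $A$ has nonempty interior. The repair is exactly the $\alpha$-dependent support description: $S(\gamma_A)=A$ for $\alpha<2$, while $S(\gamma_A)=\partial_{\mathbb R^n}A^c=\partial_{\mathbb R^n}D$ for $\alpha=2$ (here $D=A^c$, since $A^c$ is connected and $\delta|_{A^c}\ne0$), so the capacitary term contributes nothing beyond $\partial_{\mathbb R^n}D$ and your decomposition then assembles into (\ref{det5}). In case (b$_2$) the identity $\delta^A(\mathbb R^n)=\delta(\mathbb R^n)=1$ does hold, by \cite[Corollary~5.3]{Z-bal2}, because $A$ is not $\alpha$-thin at infinity, so that branch of your argument is fine.
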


\begin{remark}\label{O}Formula (\ref{det5}) gives an answer to \cite[p.~284, Open question~2.1]{O} (formulated for compact $A=K$) about conditions ensuring the identity $S(\lambda_{A,f})=A$.\end{remark}

\begin{remark}\label{E?}
Let $n\geqslant3$, $A:=\overline{\mathbb R^{n-1}}$ $({}:={\rm Cl}_{\mathbb R^n}\,\mathbb R^{n-1})$, $\mathbb R^{n-1}$ being immersed in $\mathbb R^n$, and let $f:=-U^{\varepsilon_{x_0}}$, where $\varepsilon_{x_0}$ is the unit Dirac measure at $x_0\in\mathbb R^n\setminus\overline{\mathbb R^{n-1}}$. As stated in \cite[Section~4.2, case~(i)]{Dr0}, $S(\lambda_{A,f})=\overline{\mathbb R^{n-1}}$. However, for $\alpha=2$, the quoted assertion from \cite{Dr0} is false. (Note that, by (\ref{det5}), $S(\lambda_{A,f})=\partial\mathbb R^{n-1}:=\partial_{\mathbb R^n}\mathbb R^{n-1}$.)

To substantiate this, we note that $\lambda_{A,f}$ is the Newtonian balayage $\varepsilon_{x_0}'$ of $\varepsilon_{x_0}$ onto $\overline{\mathbb R^{n-1}}$, see the latter formula in representation (\ref{RR}). Denoting by $K_0$ the inverse of the one-point compactification of $\overline{\mathbb R^{n-1}}$ with respect to $|x-x_0|=r$, $r>0$ being small enough, we further note that $\varepsilon_{x_0}'$ is the Kelvin transform of the Newtonian capacitary measure $\gamma_{K_0}$ on $K_0$ (see \cite[Section~IV.6.24]{L}). Since $K_0$ is compact and coincides with its reduced kernel, while its complement is connected, we have $S(\gamma_{K_0})=\partial K_0$ \cite[Section~II.3.13]{L}, which implies that, indeed, $S(\lambda_{A,f})=\partial\mathbb R^{n-1}$ $({}\ne\overline{\mathbb R^{n-1}})$.
\end{remark}

\begin{theorem}\label{th-desc2}  Assume that $A$ is not inner $\alpha$-thin at infinity, $\delta(\mathbb R^n)>1$, $U^\delta$ is continuous on $\overline{A}$, and there is the limit
\begin{equation}\label{limm}
\lim_{x\to\infty_{\mathbb R^n}, \ x\in A}\,U^\delta(x).
\end{equation}
Then $\lambda_{A,f}$ is of compact support.
\end{theorem}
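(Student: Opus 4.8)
The plan is to combine the variational characterization of $\lambda:=\lambda_{A,f}$ with the asymptotics of the potentials at infinity, reducing the latter to a finite point by a Kelvin inversion. First I would record the preliminary structural facts. Since $U^\delta$ is continuous on $\overline A$, the field $f=-U^\delta$ is l.s.c.\ there, so Theorem~\ref{th-nonth}\,(b$_1$) applies: as $A$ is not inner $\alpha$-thin at infinity and $\delta(\mathbb R^n)>1$, the minimizer $\lambda$ exists and the inner equilibrium constant satisfies $c_{A,f}<0$. Moreover, by \cite[Corollary~5.3]{Z-bal2} the non-thinness gives $\delta^A(\mathbb R^n)=\delta(\mathbb R^n)>1=\lambda(\mathbb R^n)$, together with $U^{\delta^A}=U^\delta$ n.e.\ on $A$ and $U^{\delta^A}\le U^\delta$ on $\mathbb R^n$.

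Next I would extract the defining (in)equalities. By Theorem~\ref{th-ch1} (the l.s.c.\ case), $U^\lambda_f\le c_{A,f}$ on $S(\lambda)$ and $U^\lambda_f\ge c_{A,f}$ n.e.\ on $A$, with equality $\lambda$-a.e.; writing $U^\lambda_f=U^\lambda-U^\delta$, this reads $U^\lambda=U^\delta+c_{A,f}$ n.e.\ on $S(\lambda)$ and $U^\lambda\ge U^\delta+c_{A,f}$ n.e.\ on $A$. Since $c_{A,f}<0$ and $U^{\delta^A}=U^\delta$ n.e.\ on $A$, it follows that $U^\lambda\le U^{\delta^A}$ holds $\lambda$-a.e., and the domination principle for $\alpha\le 2$ (note $\lambda,\delta^A\in\mathcal E^+$, the latter because $\tau^A$ and $\sigma^A$ are of finite energy by $(\mathcal P_2')$, $(\mathcal P_3')$) upgrades this to $U^\lambda\le U^{\delta^A}$ on all of $\mathbb R^n$. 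Equivalently, setting $\nu:=\delta^A-\lambda$ one has $U^\nu\ge 0$ everywhere, $U^\nu\le -c_{A,f}$ n.e.\ on $A$, and $U^\nu=-c_{A,f}$ n.e.\ on $S(\lambda)$, while $\nu$ carries the strictly positive excess mass $\delta(\mathbb R^n)-1$.

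The heart of the argument is to show that $S(\lambda)$ cannot be unbounded. Suppose it were. On $S(\lambda)$ we already have $U^\nu=-c_{A,f}$ n.e., a strictly positive constant; thus the potential of $\nu$ stays bounded away from $0$ on an unbounded, non-$\alpha$-thin subset of $\mathbb R^n$. To convert this ``positive constant at infinity'' into a contradiction I would apply the $\alpha$-Riesz Kelvin transformation about a point $z_0\notin\overline A$, under which $\overline A$ becomes bounded, the Alexandroff point of $\mathbb R^n$ maps to $z_0$, and the finite limit (\ref{limm}) manifests itself as a point mass at $z_0$ in the transform $\delta^{*}$ of $\delta$. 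In these coordinates ``$S(\lambda)$ unbounded'' becomes ``$z_0\in S(\lambda^{*})$,'' and the transformed variational inequalities, combined with the strict mass excess $\delta(\mathbb R^n)>1$ (which is exactly what makes $c_{A,f}$ strictly negative), force the transformed weighted potential to exceed the equilibrium level in a punctured neighbourhood of $z_0$; hence $z_0\notin S(\lambda^{*})$ and $S(\lambda)$ is compact.

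I expect the main obstacle to be precisely this last step: controlling the signed measure $\nu$ (of positive, possibly infinite, total mass) at infinity and the transform of the external field near $z_0$. The subtlety is that $U^\nu$ does \emph{not} decay --- it equals the positive constant $-c_{A,f}$ along $S(\lambda)$ --- so the contradiction cannot be produced by naive decay of finite-mass potentials; it must come from the interaction between the non-$\alpha$-thinness of $A$ at infinity (guaranteeing, via \cite[Corollary~5.3]{Z-bal2}, that no mass escapes under balayage) and the strict inequality $\delta(\mathbb R^n)>1$. Verifying that these two facts produce a genuine strict gap at $z_0$ after inversion --- rather than the borderline equality $c_{A,f}=-\lim_{x\to\infty,\,x\in A}U^\delta(x)$ that the first-order asymptotics alone yield --- is the crux of the proof.
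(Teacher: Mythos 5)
Your opening reductions are sound and match the paper's: existence of $\lambda=\lambda_{A,f}$ via Theorem~\ref{th-nonth}(b$_1$) (since $f=-U^\delta$ is continuous, hence l.s.c., on $\overline A$), the strict inequality $c_{A,f}<0$ from (\ref{F1}), and the variational identity $U^\lambda_f=c_{A,f}$ $\lambda$-a.e.\ from (\ref{t1-pr2}). But from there your argument has a genuine gap, and you acknowledge it yourself: the entire contradiction is supposed to come out of a Kelvin inversion about a point $z_0\notin\overline A$, where "the transformed variational inequalities \ldots force the transformed weighted potential to exceed the equilibrium level in a punctured neighbourhood of $z_0$." That step is never carried out, and it is exactly the place where the proof lives or dies; as written, nothing you have established rules out the borderline behaviour you mention at the end. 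The auxiliary machinery you set up (the measure $\nu=\delta^A-\lambda$, the domination principle to get $U^\lambda\leqslant U^{\delta^A}$ everywhere) is not wrong, but it does not feed into any completed contradiction.

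The missing idea is much simpler and makes the inversion unnecessary. If $S(\lambda)$ were noncompact, then since $U^\lambda\geqslant0$ and $U^\lambda_f=c_{A,f}$ $\lambda$-a.e., there is a sequence $(x_j)\subset A$ tending to $\infty_{\mathbb R^n}$ with
\[f(x_j)\leqslant U^\lambda_f(x_j)=c_{A,f}<0,\]
i.e.\ $U^\delta(x_j)\geqslant-c_{A,f}>0$. Because the limit (\ref{limm}) is assumed to \emph{exist}, it must therefore be ${}\geqslant-c_{A,f}>0$. On the other hand, \cite[Theorem~2.1(ii)]{Z-bal2} states that for a set $A$ which is \emph{not} inner $\alpha$-thin at infinity, this limit equals $0$ --- contradiction. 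This is the decisive use both of the non-thinness hypothesis (which in your sketch is only invoked through mass conservation under balayage, a different and insufficient consequence) and of the existence of the limit (\ref{limm}) (which in your sketch appears only as a remark about a point mass at $z_0$). Without this vanishing-at-infinity fact, or an equivalent substitute, your outline does not close.
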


\begin{remark}
Theorem~\ref{th-desc2} is {\it sharp} in the sense that $\delta(\mathbb R^n)>1$ cannot in general be replaced by $\delta(\mathbb R^n)=1$. Indeed, if $A$ is closed and not $\alpha$-thin at infinity, $\delta(\mathbb R^n)=1$, and $\delta(A^c)>0$, then for $\alpha<2$, $S(\lambda_{A,f})$ is {\it always noncompact}, cf.\ (\ref{det5}).
\end{remark}

\begin{figure}[htbp]
\begin{center}
\vspace{-.2in}
\hspace{-.1in}\includegraphics[width=4.6in]{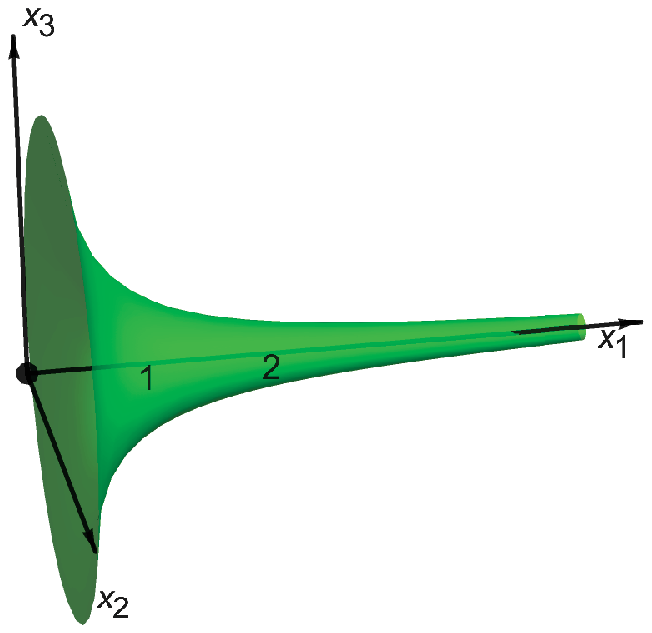}
\vspace{-1.8in}
\caption{The set $F_1$ in Example~\ref{ex} with $\varrho_1(x_1)=1/x_1$.\vspace{-.0in}}
\label{Fig2}
\end{center}
\end{figure}

\begin{example}\label{ex} On $\mathbb R^3$, consider the kernel $1/|x-y|$ and the rotation bodies
\begin{equation*}F_i:=\bigl\{x\in\mathbb R^3: \ 0\leqslant x_1<\infty, \
x_2^2+x_3^2\leqslant\varrho_i^2(x_1)\bigr\}, \ i=1,2,3,\end{equation*}
where
\begin{align*}
\varrho_1(x_1)&:=x_1^{-s}\text{ \ with\ }s\in[0,\infty),\\
\varrho_2(x_1)&:=\exp(-x_1^s)\text{ \ with\ }s\in(0,1],\\
\varrho_3(x_1)&:=\exp(-x_1^s)\text{ \ with\ }s\in(1,\infty).
\end{align*}
As seen from estimates in \cite[Section~V.1, Example]{L}, $F_1$ is not $2$-thin at infinity, $F_2$ is $2$-thin at infinity, though has infinite Newtonian capacity, whereas $F_3$ is of finite Newtonian capacity.
Therefore, $\lambda_{F_3,f}$ exists for any $f$ of form (\ref{f1}) (Theorem~\ref{th2'}).

Let now $f$ be of form (\ref{f3'}), where $S(\delta)\subset F_i^c$ is compact. Then $\lambda_{F_1,f}$ exists if and only if $\delta(\mathbb R^n)\geqslant1$ (Theorem~\ref{th-nonth}(b$_1$)). If moreover $\delta(\mathbb R^n)=1$, then $S(\lambda_{F_1,f})=\partial F_1$ (Theorem~\ref{th-desc}(b$_2$)) while $c_{F_1,f}=0$ (Theorem~\ref{th-nonth}(a$_1$)); whereas for $\delta(\mathbb R^n)>1$, $S(\lambda_{F_1,f})$ is compact (Theorem~\ref{th-desc2}) while $c_{F_1,f}<0$ (Theorem~\ref{th-nonth}(b$_1$)). If now $\delta(\mathbb R^n)\leqslant1$, then $S(\lambda_{F_3,f})=\partial F_3$  (Theorem~\ref{th-desc}(a$_2$)), whereas $\lambda_{F_2,f}$ fails to exist (Theorem~\ref{th-th}).
\end{example}

\begin{figure}[htbp]
\begin{center}
\vspace{-.3in}
\hspace{-1.1in}\includegraphics[width=4.0in]{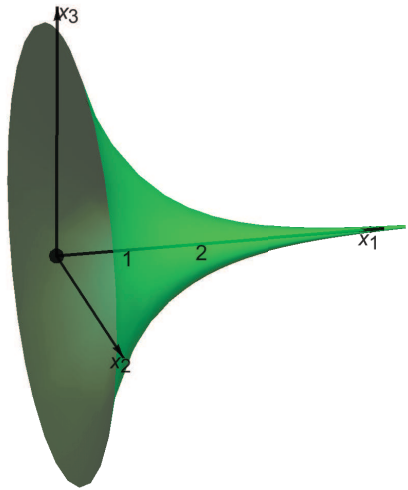}
\vspace{-1.0in}
\caption{The set $F_2$ in Example~\ref{ex} with $\varrho_2(x_1)=\exp(-x_1)$.\vspace{-.1in}}
\label{Fig1}
\end{center}
\end{figure}

\subsection{Remark}\label{sec-remark}
  In the stated generality,
  all the results thus obtained seem to be new; however, a few of them were known before for some very particular $f$ and/or $A$. For instance, if $\alpha\in(0,2]$, $A$ is closed while $\delta$, the measure in representation (\ref{f3'}), is just $q\varepsilon_{x_0}$, where $q\in(0,\infty)$ and $x_0\not\in A$, then (\ref{iff}) and Theorem~\ref{th-desc2}
  were given in \cite[Corollary~2.6]{Dr0} (cf.\ also \cite[Theorem~2.1]{BDO}). See also Remarks~\ref{f-stre} and \ref{E?} above. Such a significant improvement of the theory in question is due to a new approach, based on the close interaction between the strong and the vague topologies on the strongly complete cone $\mathcal E^+$, as well as on the author's recent theory of inner balayage.

\section{Preliminary results}\label{sec-aux}

\subsection{On convergent nets} We shall first establish several auxiliary results related to convergent nets $(\mu_s)_{s\in S}$ of positive measures.\footnote{Since $\mathfrak M(\mathbb R^n)$ equipped with the vague topology satisfies the first axiom of countability (see e.g.\ \cite[Lemma~4.4]{Z-arx}), it is usually enough to consider convergent {\it sequences} of measures. Nonetheless, in the study of inner potential theoretical concepts, it is often convenient to operate with {\it nets}, in particular when dealing with the upward directed set $\mathfrak C_A$ of all compact subsets $K$ of $A$.}

\begin{lemma}\label{lemma-semi}
Assume that a set $F\subset\mathbb R^n$ is closed, a function $g:F\to(-\infty,\infty]$ is l.s.c.\ and lower bounded, and a net
$(\mu_s)_{s\in S}\subset\mathfrak M^+(F)$ converges vaguely to $\mu_0$.\footnote{Then necessarily $\mu_0\in\mathfrak M^+(F)$, $\mathfrak M^+(F)$ being vaguely closed \cite[Section~III.2, Proposition~6]{B2}.} If moreover either $g\geqslant0$ on $F$, or
  \begin{equation}\label{mu}
    \lim_{s\in S}\,\mu_s(\mathbb R^n)=\mu_0(\mathbb R^n)\in(0,\infty),
  \end{equation}
then
\begin{equation}\label{eq-lower}
 \int g\,d\mu_0\leqslant\liminf_{s\in S}\,\int g\,d\mu_s.
\end{equation}
\end{lemma}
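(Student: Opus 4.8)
The plan is to split the statement into two parts: first reduce the hypothesis ``$g$ lower bounded together with mass convergence (\ref{mu})'' to the cleaner case $g\geqslant0$, and then prove that case by approximating $g$ from below by continuous functions of compact support, to which the vague convergence applies directly. For the reduction, I would pick a real lower bound $c$ for $g$, so that $g-c\geqslant0$ on $F$, and write $\int g\,d\mu_s=\int(g-c)\,d\mu_s+c\,\mu_s(\mathbb R^n)$. Hypothesis (\ref{mu}) guarantees that $\mu_0(\mathbb R^n)$, and hence eventually $\mu_s(\mathbb R^n)$, is finite, so the subtracted term is finite and, by (\ref{mu}), converges to $c\,\mu_0(\mathbb R^n)$. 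Since the $\liminf$ is unaffected by adding a convergent net, (\ref{eq-lower}) for $g$ is then equivalent to (\ref{eq-lower}) for the nonnegative function $g-c$; this disposes of the second alternative and leaves only the case $g\geqslant0$.

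For the case $g\geqslant0$, I would extend $g$ to $\tilde g:\mathbb R^n\to[0,\infty]$ by setting $\tilde g:=+\infty$ on $F^c$. Because $F$ is closed and $g$ is l.s.c.\ on $F$, the extension $\tilde g$ is l.s.c.\ on all of $\mathbb R^n$: at points of the open set $F^c$ this is immediate since $\tilde g\equiv+\infty$ there, while at $x_0\in F$ an approaching point either lies in $F^c$, where $\tilde g=+\infty$, or in $F$, where the lower limit is ${}\geqslant g(x_0)$ by the lower semicontinuity of $g$ in the subspace topology. Since every measure $\mu\in\{\mu_s\}_{s\in S}\cup\{\mu_0\}$ is concentrated on $F$ and $\tilde g=g$ on $F$, we have $\int\tilde g\,d\mu=\int g\,d\mu$ for all of them, so it is harmless to work with $\tilde g$.

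Next I would invoke the standard representation of the integral of a nonnegative l.s.c.\ function as the supremum of the integrals of its continuous compactly supported minorants. Letting $\Phi$ be the (upward directed) family of all nonnegative $\varphi\in C_0(\mathbb R^n)$ with $\varphi\leqslant\tilde g$, this gives
\[\int g\,d\mu_0=\int\tilde g\,d\mu_0=\sup_{\varphi\in\Phi}\,\int\varphi\,d\mu_0.\]
For each fixed $\varphi\in\Phi$, the concentration of the $\mu_s$ on $F$ and $\varphi\leqslant\tilde g$ yield $\int\varphi\,d\mu_s\leqslant\int g\,d\mu_s$ for every $s$, while vague convergence gives $\int\varphi\,d\mu_s\to\int\varphi\,d\mu_0$; hence
\[\int\varphi\,d\mu_0=\lim_{s\in S}\,\int\varphi\,d\mu_s\leqslant\liminf_{s\in S}\,\int g\,d\mu_s.\]
Taking the supremum over $\varphi\in\Phi$ and using the displayed representation of $\int g\,d\mu_0$ then yields (\ref{eq-lower}).

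The only genuinely nontrivial ingredient is the representation $\int\tilde g\,d\mu_0=\sup_{\varphi\in\Phi}\int\varphi\,d\mu_0$, i.e.\ the mean approximation of a positive l.s.c.\ function from below by members of $C_0(\mathbb R^n)$; this is a basic property of the integral of a positive l.s.c.\ function \cite{B2}, and it is precisely what converts the mere lower semicontinuity of $g$ into an actual vague limit of the $\varphi$-integrals. The remaining points are routine bookkeeping: verifying that $\tilde g$ is l.s.c.\ on $\mathbb R^n$, and, in the reduction step, ensuring that the subtracted mass term $c\,\mu_s(\mathbb R^n)$ is finite and convergent — which is exactly where the finiteness $\mu_0(\mathbb R^n)\in(0,\infty)$ in (\ref{mu}) is used.
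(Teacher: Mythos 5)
Your argument is correct and follows essentially the same route as the paper: the case of a lower bounded $g$ is reduced to the case $g\geqslant0$ by adding/subtracting a constant and using the mass convergence (\ref{mu}), and the case $g\geqslant0$ is the vague lower semicontinuity of $\mu\mapsto\int g\,d\mu$ for a positive l.s.c.\ function. The only difference is that the paper simply cites \cite[Section~IV.1, Proposition~4]{B2} for that core case (applied to $F$ as a locally compact subspace), whereas you reprove it by extending $g$ by $+\infty$ to $\mathbb R^n$ and using the sup-representation of the integral over continuous compactly supported minorants — a correct, self-contained unfolding of the same fact.
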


\begin{proof}
For $g\geqslant0$, this is well known (see \cite[Section~IV.1, Proposition~4]{B2}, applied to $F$ treated as a locally compact subspace of $\mathbb R^n$). Otherwise, there exists $c\in(0,\infty)$ such that $g':=g+c$ is l.s.c.\ and positive on $F$. Applying (\ref{eq-lower}) to $g'$, and then subtracting (\ref{mu}) multiplied by $c$
from the inequality thereby obtained, we get the lemma.\end{proof}

\begin{remark}
 Assumption (\ref{mu}) is certainly fulfilled if $F=K$ is compact, for the mapping $\mu\mapsto\mu(K)$ is vaguely continuous on $\mathfrak M^+(K)$. Yet another possibility for (\ref{mu}) to hold is described in Lemma~\ref{str-cl-1} below.
\end{remark}

\begin{lemma}\label{str-cl-1}
Given an arbitrary set $A\subset\mathbb R^n$ with $c_*(A)<\infty$, consider a net $(\mu_s)_{s\in S}\subset\breve{\mathcal E}^+(A)$ converging strongly {\rm(}hence vaguely{\rm)} to $\mu_0\in\mathcal E^+$. Then
\begin{equation}\label{exm}\mu_0(\mathbb R^n)=1.\end{equation}
\end{lemma}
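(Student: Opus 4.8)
The plan is to establish the two inequalities $\mu_0(\mathbb R^n)\leqslant1$ and $\mu_0(\mathbb R^n)\geqslant1$ separately. The first is a soft vague-semicontinuity fact that holds without any capacity hypothesis, whereas the second is precisely where the assumption $c_*(A)<\infty$ enters, through the inner capacitary measure.

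For the upper bound I would first observe that the vague limit $\mu_0$ is concentrated on the closed set $\overline{A}$, being the vague limit of the $\mu_s\in\mathfrak M^+(A)\subset\mathfrak M^+(\overline{A})$ and $\mathfrak M^+(\overline{A})$ being vaguely closed. I then apply Lemma~\ref{lemma-semi} with $F:=\overline{A}$ and the constant (hence l.s.c.\ and $\geqslant0$) function $g\equiv1$, which gives
\[\mu_0(\mathbb R^n)=\int 1\,d\mu_0\leqslant\liminf_{s\in S}\,\int 1\,d\mu_s=\liminf_{s\in S}\,\mu_s(\mathbb R^n)=1,\]
the last equality holding because each $\mu_s\in\breve{\mathcal E}^+(A)$ has unit mass.

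For the lower bound I would bring in the inner capacitary measure $\gamma_A$, which exists because $c_*(A)<\infty$, normalized by $\gamma_A(\mathbb R^n)=c_*(A)$, and use its two standard potential-theoretic properties: $U^{\gamma_A}=1$ nearly everywhere on $A$, and $U^{\gamma_A}\leqslant1$ everywhere on $\mathbb R^n$. Since each $\mu_s$ is of finite energy, it annihilates sets of zero inner capacity, so the relation $U^{\gamma_A}=1$ holding n.e.\ on $A$ holds in fact $\mu_s$-a.e.; as $\mu_s$ is concentrated on $A$ and carries unit mass, this yields
\[\langle\mu_s,\gamma_A\rangle=\int U^{\gamma_A}\,d\mu_s=\mu_s(\mathbb R^n)=1.\]
The inner product is strongly continuous, for by Cauchy--Schwarz $|\langle\mu_s-\mu_0,\gamma_A\rangle|\leqslant\|\mu_s-\mu_0\|\,\|\gamma_A\|\to0$, and hence passing to the strong limit gives $\langle\mu_0,\gamma_A\rangle=1$. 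Invoking now $U^{\gamma_A}\leqslant1$ everywhere, I obtain
\[1=\langle\mu_0,\gamma_A\rangle=\int U^{\gamma_A}\,d\mu_0\leqslant\mu_0(\mathbb R^n),\]
which combined with the upper bound forces $\mu_0(\mathbb R^n)=1$.

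The main obstacle --- in fact the only genuinely nonroutine point --- is the passage from ``$U^{\gamma_A}=1$ n.e.\ on $A$'' to ``$U^{\gamma_A}=1$ $\mu_s$-a.e.'', i.e.\ the fact that a measure of finite energy gives zero mass to a set $N\subset A$ of zero inner capacity. For a possibly nonmeasurable $N$ this is seen from $\mu_{s*}(N)=\sup\{\mu_s(K):\ K\subset N\text{ compact}\}=0$, every compact $K\subset N$ satisfying $c(K)\leqslant c_*(N)=0$ and therefore $\mu_s(K)=0$. Everything else reduces to Cauchy--Schwarz and the two-sided bounds on the capacitary potential, and I would be careful to cite the latter in the form valid for all $\alpha\in(0,n)$, since this lemma, sitting in the preliminary section, is stated without the restriction $\alpha\leqslant2$.
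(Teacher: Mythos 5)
Your upper bound $\mu_0(\mathbb R^n)\leqslant1$ is exactly the paper's, and your idea for the lower bound --- pairing with the inner capacitary measure $\gamma_A$ of $A$ itself and using the strong continuity of the inner product --- is genuinely different from (and, where it works, slicker than) the paper's argument. But the point you yourself flagged is precisely where it breaks: the inequality $U^{\gamma_A}\leqslant1$ \emph{everywhere} on $\mathbb R^n$ is \emph{not} available for all $\alpha\in(0,n)$. What the variational characterization of $\gamma_A$ gives for arbitrary $\alpha$ is only $U^{\gamma_A}\geqslant1$ n.e.\ on $A$ together with $U^{\gamma_A}\leqslant1$ on $S(\gamma_A)$ (equivalently, $U^{\gamma_A}=1$ $\gamma_A$-a.e.). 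Promoting the latter to an inequality on all of $\mathbb R^n$ is Frostman's maximum principle, which holds for $\alpha\leqslant2$ and fails for $\alpha\in(2,n)$; in that range one only has the dilated bound $U^{\gamma_A}\leqslant2^{n-\alpha}$, which in your last display would yield $\mu_0(\mathbb R^n)\geqslant2^{\alpha-n}<1$. Nor can you restrict attention to where $\mu_0$ lives: $\mu_0$ is only known to belong to $\mathcal E'(A)\subset\mathcal E^+(\overline{A})$ (the lemma does not assume $(\mathcal P_4)$), and $\overline{A}$ can be all of $\mathbb R^n$ even when $c_*(A)<\infty$, so ``$U^{\gamma_A}\leqslant1$ $\mu_0$-a.e.'' is not secured by $U^{\gamma_A}=1$ n.e.\ on $A$. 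Since this lemma sits in the preliminaries and feeds into Theorem~\ref{quasi-complete}, Lemma~\ref{l-str-sem}, and Theorem~\ref{th2'}, all valid for every $\alpha\in(0,n)$, the gap for $\alpha>2$ is real.

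The paper's proof is designed to avoid any upper bound on a capacitary potential. It sandwiches $\mu_0(\mathbb R^n)$ between $1$ and $1-\liminf_{(s,K)}\int1_{A\setminus K}\,d\mu_s$ (using vague upper semicontinuity of $\mu\mapsto\mu(K)$ for compact $K$), thereby reducing everything to showing that no mass escapes: $\liminf_{(s,K)}\int1_{A\setminus K}\,d\mu_s=0$. This is done with the capacitary measures $\gamma_{A\setminus K}$ of the tails, using only the \emph{lower} bound $U^{\gamma_{A\setminus K}}\geqslant1$ n.e.\ on $A\setminus K$ (hence $\mu_s$-a.e.) and Cauchy--Schwarz, $\int1_{A\setminus K}\,d\mu_s\leqslant\|\gamma_{A\setminus K}\|\,\|\mu_s\|$, together with $\|\gamma_{A\setminus K}\|^2=c_*(A\setminus K)\downarrow$ and the resulting strong convergence $\gamma_{A\setminus K}\to0$. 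If you want to keep your cleaner pairing argument, you must either restrict it to $\alpha\leqslant2$ or replace the step ``$U^{\gamma_A}\leqslant1$ everywhere'' by an argument of the paper's type; your treatment of the n.e.-to-$\mu_s$-a.e.\ passage via inner regularity is fine and matches the paper's Lemma~\ref{l-negl}.
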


\begin{proof} Taking a subnet if necessary and changing notation, we can certainly assume $(\mu_s)_{s\in S}$ to be strongly bounded:
\begin{equation}\label{str-b}
 \sup_{s\in S}\,\|\mu_s\|<\infty.
\end{equation}
Furthermore, since $\mu_s\to\mu_0$ vaguely, Lemma~\ref{lemma-semi} with $F:=\mathbb R^n$ and $g:=1$ gives
\begin{equation}\label{11'}
 \mu_0(\mathbb R^n)\leqslant\liminf_{s\in S}\,\mu_s(\mathbb R^n)=1,
\end{equation}
whereas \cite[Section~IV.4.4, Corollary~3]{B2} yields
\begin{equation}\label{upper}\int1_K\,d\mu_0\geqslant\limsup_{s\in S}\,\int1_K\,d\mu_s\text{ \ for every compact $K\subset\mathbb R^n$},\end{equation}
the indicator function $1_K$ being bounded, of compact support, and upper semicontinuous on $\mathbb R^n$. Combining (\ref{11'}) and (\ref{upper}) with
\[\mu_0(\mathbb R^n)=\lim_{K\uparrow\mathbb R^n}\,\int1_K\,d\mu_0,\]
we get
\begin{equation}\label{110'}1\geqslant\mu_0(\mathbb R^n)\geqslant\limsup_{(s,K)\in S\times\mathfrak C}\,\int1_K\,d\mu_s=
1-\liminf_{(s,K)\in S\times\mathfrak C}\,\int1_{A\setminus K}\,d\mu_s,\end{equation}
$\mathfrak C:=\mathfrak C_{\mathbb R^n}$ being the upward directed set of all compact subsets $K$ of $\mathbb R^n$, and $S\times\mathfrak C$ the directed product of the directed sets $S$ and
$\mathfrak C$, cf.\ \cite[p.~68]{K}. (Note that the equality in (\ref{110'}) is implied by the fact that each $\mu_s$ is a positive measure of unit total mass concentrated on $A$.) The proof of (\ref{exm}) is thus reduced to that of
\begin{equation}\label{0}
  \liminf_{(s,K)\in S\times\mathfrak C}\,\int1_{A\setminus K}\,d\mu_s=0.
\end{equation}

By \cite[Theorem~2.6]{L} applied to $A\setminus K$, $K\in\mathfrak C$ being arbitrarily chosen, there exists the (unique) inner capacitary measure $\gamma_{A\setminus K}$, minimizing the energy $\|\cdot\|^2$ over the (convex) class $\Gamma_{A\setminus K}$ of all $\mu\in\mathcal E^+$ with the property
\[U^\mu\geqslant1\text{ \  n.e.\ on $A\setminus K$.}\]
For any $K'\in\mathfrak C$ such that $K\subset K'$, we have $\Gamma_{A\setminus K}\subset\Gamma_{A\setminus K'}$, and  \cite[Lemma~2.2]{L} therefore gives
\begin{equation}\label{str-ca}\|\gamma_{A\setminus K}-\gamma_{A\setminus K'}\|^2\leqslant\|\gamma_{A\setminus K}\|^2-\|\gamma_{A\setminus K'}\|^2.\end{equation}
Since $\|\gamma_{A\setminus K}\|^2=c_*(A\setminus K)$ \cite[Theorem~2.6]{L}, $\|\gamma_{A\setminus K}\|^2$ decreases as $K$ ranges through $\mathfrak C$, which together with (\ref{str-ca}) implies that the net $(\gamma_{A\setminus K})_{K\in\mathfrak C}\subset\mathcal E^+$ is Cauchy in the strong topology on $\mathcal E^+$. Noting that $(\gamma_{A\setminus K})_{K\in\mathfrak C}$ converges vaguely to zero,\footnote{Indeed, for any given $\varphi\in C_0(\mathbb R^n)$, there exists a relatively compact, open set $G\subset\mathbb R^n$ such that $\varphi(x)=0$ for all $x\not\in\overline{G}$. Hence, $\gamma_{A\setminus K}(\varphi)=0$ for all $K\in\mathfrak C$ with $K\supset\overline{G}$, and the claim follows.} we get
\begin{equation}\label{str-conv}
  \gamma_{A\setminus K}\to0\text{ \ strongly in $\mathcal E^+$ as $K\uparrow\mathbb R^n$,}
\end{equation}
the $\alpha$-Riesz kernel being perfect (Section~\ref{sec-gen}).

It follows from the above that
\begin{equation}\label{ii}U^{\gamma_{A\setminus K}}\geqslant1_{A\setminus K}\text{ \ n.e.\ on $A\setminus K$},\end{equation}
hence $\mu_s$-a.e.\ for all $s\in S$, the latter being derived from Lemma~\ref{l-negl} (see below) due to the fact that $A\setminus K$ along with $A$ is $\mu_s$-mea\-sur\-ab\-le, and therefore so is the set $E$ of all $x\in A\setminus K$ having the property $U^{\gamma_{A\setminus K}}(x)<1$. Integrating (\ref{ii}) with respect to $\mu_s$ we thus obtain, by the Cauchy--Schwarz (Bunyakovski) inequality,
\[\int1_{A\setminus K}\,d\mu_s\leqslant\int U^{\gamma_{A\setminus K}}\,d\mu_s\leqslant\|\gamma_{A\setminus K}\|\cdot\|\mu_s\|\text{ \ for all $K\in\mathfrak C$ and $s\in S$},\]
which combined with (\ref{str-b}) and (\ref{str-conv}) establishes (\ref{0}), whence (\ref{exm}).
\end{proof}

\begin{definition}[{\rm Landkof \cite[Section~II.1.2]{L}}]
  A measure $\mu\in\mathfrak M$ is said to be {\it $C$-ab\-s\-o\-lutely continuous} if $\mu(K)=0$ for any compact set $K\subset\mathbb R^n$ of zero capacity. Every $\mu\in\mathcal E$ is $C$-absolutely continuous, but not conversely.
\end{definition}

\begin{lemma}\label{l-negl}Given a $C$-ab\-s\-o\-lutely continuous measure $\mu\in\mathfrak M^+$, let $E\subset\mathbb R^n$ be a $\mu$-mea\-sur\-ab\-le set with $c_*(E)=0$. Then $E$ is $\mu$-neg\-lig\-ible, that is, $\mu^*(E)=0$.\end{lemma}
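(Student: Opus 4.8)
The plan is to combine the inner regularity of the Radon measure $\mu$ with respect to compact sets with the hypothesis of $C$-absolute continuity, after first reducing to subsets of finite $\mu$-measure. First I would exhaust $\mathbb R^n$ by the closed balls $\overline{B(0,m)}$, $m\in\mathbb N$, and set $E_m:=E\cap\overline{B(0,m)}$. Since $E$ is $\mu$-measurable and $\overline{B(0,m)}$ is compact, each $E_m$ is a $\mu$-measurable set with $\mu(E_m)\leqslant\mu\bigl(\overline{B(0,m)}\bigr)<\infty$, the measure $\mu$ being finite on compact sets. As $(E_m)$ increases to $E$, continuity of $\mu$ from below yields $\mu^*(E)=\mu(E)=\lim_{m\to\infty}\mu(E_m)$, so that it is enough to prove that $\mu(E_m)=0$ for every $m$.

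Next, for a fixed $m$, I would invoke the inner regularity of $\mu$ on the $\mu$-integrable set $E_m$ (see \cite[Chapter~IV]{B2}), namely
\[\mu(E_m)=\sup\bigl\{\mu(K):\ K\subset E_m,\ K\text{ compact}\bigr\}.\]
For any compact $K\subset E_m\subset E$ one has, by the capacitability of compact sets and the definition of inner capacity, $c(K)=c_*(K)\leqslant c_*(E)=0$; that is, $K$ is a compact set of zero capacity. The $C$-absolute continuity of $\mu$ then gives $\mu(K)=0$. Taking the supremum over all such $K$ shows $\mu(E_m)=0$, and letting $m\to\infty$ completes the proof.

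The argument is essentially routine, and the only point requiring care is the passage from the hypothesis $c_*(E)=0$ on the (possibly noncompact, infinite-measure) set $E$ to the vanishing of $\mu$ on its compact subsets. This is precisely why I would first cut $E$ down to the relatively compact pieces $E_m$: inner regularity of a Radon measure is available on $\mu$-integrable sets, and the exhaustion guarantees the finiteness of $\mu(E_m)$, whereas the defining property of $C$-absolute continuity can be applied only to compact sets of zero capacity. Once these two sides are aligned, the elementary observation that $c(K)\leqslant c_*(E)$ for every compact $K\subset E$ does all the remaining work.
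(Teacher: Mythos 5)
Your proof is correct and follows essentially the same route as the paper's: the paper reduces to showing $\mu_*(E)=0$ (i.e.\ local $\mu$-negligibility, upgraded to negligibility via the $\sigma$-compactness of $\mathbb R^n$) and then applies $C$-absolute continuity to compact $K\subset E$, which by monotonicity satisfy $c(K)\leqslant c_*(E)=0$. Your exhaustion by the balls $\overline{B(0,m)}$ together with inner regularity on the integrable pieces $E_m$ is just a more explicit rendering of that same argument.
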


\begin{proof}
   Since $\mathbb R^n$ is representable as a countable union of compact sets, it is enough to show that $E$ is locally $\mu$-negligible, or equivalently, that $\mu_*(E)=0$. In view of the $C$-ab\-s\-o\-lute continuity of $\mu$, this follows immediately from $c_*(E)=0$.
\end{proof}

\begin{theorem}\label{quasi-complete}
Given $A\subset\mathbb R^n$ with $c_*(A)<\infty$, assume $(\mathcal P_4)$ is fulfilled. Then $\breve{\mathcal E}^+(A)$ along with $\mathcal E^+(A)$ is strongly closed, and hence strongly complete.
\end{theorem}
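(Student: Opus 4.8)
The plan is to reduce the statement to the two ingredients already available: the hypothesis $(\mathcal P_4)$, which asserts that $\mathcal E^+(A)$ is strongly closed, and Lemma~\ref{str-cl-1}, which guarantees preservation of unit total mass under strong convergence when $c_*(A)<\infty$. The key observation is the decomposition
\[
\breve{\mathcal E}^+(A)=\mathcal E^+(A)\cap\bigl\{\mu\in\mathfrak M^+:\ \mu(\mathbb R^n)=1\bigr\},
\]
so that the closedness of $\breve{\mathcal E}^+(A)$ will follow once we know that both factors behave well under strong limits.

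First I would take an arbitrary net $(\mu_s)_{s\in S}\subset\breve{\mathcal E}^+(A)$ converging strongly to some $\mu_0$. Since the cone $\mathcal E^+$ is strongly complete and the $\alpha$-Riesz kernel is perfect (Section~\ref{sec-gen}), such a strong limit exists in $\mathcal E^+$ and the convergence is simultaneously vague, with $\mu_0\in\mathcal E^+$ uniquely determined. Because $(\mu_s)\subset\mathcal E^+(A)$ and this class is strongly closed by $(\mathcal P_4)$, the limit satisfies $\mu_0\in\mathcal E^+(A)$; that is, $\mu_0$ is of finite energy and concentrated on $A$.

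It then remains only to verify that $\mu_0$ carries unit mass, and this is precisely where the standing hypothesis $c_*(A)<\infty$ enters. Applying Lemma~\ref{str-cl-1} to the net $(\mu_s)_{s\in S}\subset\breve{\mathcal E}^+(A)$, which converges strongly (hence vaguely) to $\mu_0\in\mathcal E^+$, yields $\mu_0(\mathbb R^n)=1$. Consequently $\mu_0\in\breve{\mathcal E}^+(A)$, and the class is strongly closed. Since a strongly closed subset of the strongly complete cone $\mathcal E^+$ is itself strongly complete (the strong topology being induced by the genuine norm $\|\cdot\|$), both $\breve{\mathcal E}^+(A)$ and $\mathcal E^+(A)$ are strongly complete, the latter being closed already by $(\mathcal P_4)$.

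The only genuine difficulty is the preservation of total mass, and it has been isolated in Lemma~\ref{str-cl-1}. Under vague convergence alone one obtains merely the inequality $\mu_0(\mathbb R^n)\leqslant\liminf_{s\in S}\mu_s(\mathbb R^n)=1$, so that mass might escape to infinity; the finiteness of $c_*(A)$ rules this out, since the inner capacitary measures $\gamma_{A\setminus K}$ tend strongly to zero as $K\uparrow\mathbb R^n$ and thereby uniformly control the mass of each $\mu_s$ outside large compacta. Once this quantitative non-escape estimate is in hand, the present theorem is a routine assembly, and I do not expect any further obstacle.
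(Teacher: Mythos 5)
Your proof is correct and follows essentially the same route as the paper: the closedness of $\mathcal E^+(A)$ is taken from $(\mathcal P_4)$, the preservation of unit total mass under strong (hence vague) convergence is delegated to Lemma~\ref{str-cl-1} via the hypothesis $c_*(A)<\infty$, and completeness then follows because a strongly closed subset of the strongly complete cone $\mathcal E^+$ is itself strongly complete. The paper's own proof is just a two-sentence compression of exactly this argument.
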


\begin{proof}
Since according to $(\mathcal P_4)$, $\mathcal E^+(A)$ is strongly closed, applying Lemma~\ref{str-cl-1} shows that so is $\breve{\mathcal E}^+(A)$. It remains to observe that any strongly closed subset of the strongly complete cone $\mathcal E^+$ must be strongly complete as well.
\end{proof}

\subsection{Continuity properties of the $f$-weighted energy}\label{sec-some}
Recall that an external field $f$ is assumed to be of form (\ref{f1}), where $\psi$, $\vartheta$, and $\omega$ satisfy $(\mathcal P_1)$--$(\mathcal P_3)$. Then the $f$-weighted energy $I_f(\mu)$ along with $\int f\,d\mu$ is well defined as a finite number or $+\infty$ for all bounded $\mu\in\mathcal E^+(\overline{A})$, see Lemma~\ref{int}.

\begin{lemma}\label{l-str-sem} For any $A\subset\mathbb R^n$ with $c_*(A)<\infty$, assume a net $(\mu_s)_{s\in S}\subset\breve{\mathcal E}^+(A)$ converges strongly to $\mu_0\in\mathcal E^+$. Then
\begin{equation*}\label{en-conv}
 I_f(\mu_0)\leqslant\liminf_{s\in S}\,I_f(\mu_s).
\end{equation*}
\end{lemma}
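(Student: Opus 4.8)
The plan is to split $I_f(\mu)=\|\mu\|^2+2\int\psi\,d\mu+2\int U^\vartheta\,d\mu+2\int U^\omega\,d\mu$ according to the decomposition (\ref{f1}) and to treat the four summands separately, showing that three of them are strongly \emph{continuous} along the net while the remaining ($\psi$-)term is only strongly \emph{lower semicontinuous}; adding these up will yield the asserted lower semicontinuity of $I_f$.

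First I would record the two facts that make the net well behaved. Since the $\alpha$-Riesz kernel is perfect (Section~\ref{sec-gen}), the strong convergence $\mu_s\to\mu_0$ entails vague convergence to the same limit; as $\mathfrak M^+(\overline A)$ is vaguely closed, $\mu_0\in\mathcal E^+(\overline A)$, so $I_f(\mu_0)$ is well defined as a finite number or $+\infty$ by Lemma~\ref{int}. Crucially, because $c_*(A)<\infty$, Lemma~\ref{str-cl-1} yields the mass convergence $\mu_s(\mathbb R^n)=1\to1=\mu_0(\mathbb R^n)$, i.e.\ hypothesis (\ref{mu}) of Lemma~\ref{lemma-semi} is at our disposal throughout.

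Then I would dispatch the terms one by one. The energy term is strongly continuous, since $\bigl|\,\|\mu_s\|-\|\mu_0\|\,\bigr|\leqslant\|\mu_s-\mu_0\|\to0$ gives $I(\mu_s)=\|\mu_s\|^2\to\|\mu_0\|^2=I(\mu_0)$. For $\vartheta\in\mathcal E$ one has $\int U^\vartheta\,d\mu=\langle\vartheta,\mu\rangle$ by Fubini, and Cauchy--Schwarz, $|\langle\vartheta,\mu_s-\mu_0\rangle|\leqslant\|\vartheta\|\,\|\mu_s-\mu_0\|$, makes this term converge as well. For the $\psi$-term, $\psi\in\Phi(\overline A)$ is l.s.c.\ and lower bounded on the closed set $\overline A$ (it is ${}\geqslant0$ unless $\overline A$ is compact, in which case lower semicontinuity on a compact set forces lower boundedness), so Lemma~\ref{lemma-semi} applied with $F=\overline A$, $g=\psi$, and (\ref{mu}) yields $\int\psi\,d\mu_0\leqslant\liminf_s\int\psi\,d\mu_s$.

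The part I expect to require the most care is the $\omega$-term, because $U^\omega=U^{\omega^+}-U^{\omega^-}$ and the summand $-U^{\omega^-}$ is upper, not lower, semicontinuous, so plain vague lower semicontinuity is useless and I genuinely need \emph{convergence}. The key observation is that $(\mathcal P_3)$ forces $d(S_\omega,\overline A)=d(S_\omega,A)>0$, whence each $U^{\omega^\pm}$ is bounded (by $d(S_\omega,A)^{\alpha-n}\omega^\pm(\mathbb R^n)$) and continuous on $\overline A$, the continuity following by dominated convergence since $|x-y|^{\alpha-n}$ is bounded for $x\in\overline A,\ y\in S_\omega$. Thus both $U^{\omega^\pm}$ and $-U^{\omega^\pm}$ are l.s.c.\ and lower bounded on $\overline A$, and applying Lemma~\ref{lemma-semi} to each of them (again using (\ref{mu})) forces $\int U^{\omega^\pm}\,d\mu_s\to\int U^{\omega^\pm}\,d\mu_0$, hence $\int U^\omega\,d\mu_s\to\int U^\omega\,d\mu_0$. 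Finally, adding the three convergent contributions to the lower-semicontinuous $\psi$-term and using $\liminf_s(a_s+b_s)=\lim_s a_s+\liminf_s b_s$ whenever $\lim_s a_s$ exists (all the convergent terms being finite), I obtain $\liminf_s I_f(\mu_s)\geqslant I_f(\mu_0)$, as required.
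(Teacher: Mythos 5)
Your proof is correct and follows essentially the same route as the paper's: strong continuity of the energy norm, Cauchy--Schwarz for the $U^\vartheta$-term, Lemma~\ref{str-cl-1} to secure the mass condition (\ref{mu}), and Lemma~\ref{lemma-semi} with $F:=\overline{A}$ applied to the remaining summands. The only (harmless) difference is that you extract full convergence of the $U^{\omega^+}$-term by applying Lemma~\ref{lemma-semi} to $\pm U^{\omega^+}$, where the paper settles for the one-sided $\liminf$ inequality.
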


\begin{proof} Since the energy norm is strongly continuous on $\mathcal E$, we only need to verify that
\begin{equation}\label{lstr}
 \int f\,d\mu_0\leqslant\liminf_{s\in S}\,\int f\,d\mu_s.
\end{equation}

Applying the Cauchy--Schwarz inequality to the (signed) measures $\vartheta$ and $\mu_s-\mu_0$, elements of the pre-Hil\-bert space $\mathcal E$, we get
\[\left|\langle\vartheta,\mu_s-\mu_0\rangle\right|\leqslant\|\vartheta\|\cdot\|\mu_s-\mu_0\|,\]
which yields, by the strong convergence of $(\mu_s)_{s\in S}$ to $\mu_0$,
\begin{equation}\label{theta}
\langle\vartheta,\mu_0\rangle=\lim_{s\in S}\,\langle\vartheta,\mu_s\rangle,
\end{equation}
whence (\ref{lstr}) for $U^\vartheta$ in place of $f$.

Since $c_*(A)<\infty$ while $(\mu_s)_{s\in S}\subset\breve{\mathcal E}^+(A)$ converges strongly (hence vaguely) to $\mu_0$,  Lemma~\ref{str-cl-1} gives $\mu_0(\mathbb R^n)=1$, whence (\ref{mu}). Therefore, applying Lemma~\ref{lemma-semi} with $F:=\overline{A}$ to each of the functions $\psi$, $U^{\omega^+}$, and $-U^{\omega^-}$, and then combining the inequalities obtained with (\ref{theta}) we get (\ref{lstr}). (For $\psi$, we use the fact that a l.s.c.\ function on a compact set is lower bounded. For $-U^{\omega^-}$, we observe from $(\mathcal P_3)$ that its restriction to $\overline{A}$ is continuous (hence l.s.c.) and lower bounded.)
\end{proof}

\subsection{Quasiclosed sets}\label{secqu} For any $A\subset\mathbb R^n$, let $\mathcal E'(A)$ stand for the closure of $\mathcal E^+(A)$ in the strong topology on $\mathcal E^+$. Being a strongly closed subcone of the strongly complete cone $\mathcal E^+$, $\mathcal E'(A)$ is likewise complete in the (induced) strong topology.

We shall be mainly interested in the case when $(\mathcal P_4)$ is fulfilled (see Section~\ref{sec-intr2}), or equivalently, when
\begin{equation*}\label{Ecl}
 \mathcal E'(A)=\mathcal E^+(A).
\end{equation*}
A sufficient condition for this to occur is given in Theorem~\ref{l-quasi} below.
(In general,
\begin{equation}\label{belong}
\mathcal E'(A)\subset\mathcal E^+(\overline{A}),
\end{equation}
$\mathcal E^+(\overline{A})$ being strongly closed according to Theorem~\ref{l-quasi}.)

\begin{definition}[{\rm B.~Fuglede \cite{F71}}] A set $A\subset\mathbb R^n$ is said to be {\it quasiclosed} if
\begin{equation*}\label{def-q}
\inf\,\bigl\{c^*(A\bigtriangleup F):\ F\text{ closed, }F\subset\mathbb R^n\bigr\}=0,
\end{equation*}
$\bigtriangleup$ being the symmetric difference. Replacing here "closed" by "compact", we arrive at the concept of {\it quasicompact} sets.
\end{definition}

\begin{theorem}\label{l-quasi}If $A\subset\mathbb R^n$ is quasiclosed {\rm(}or in particular quasicompact{\rm)}, then the cone $\mathcal E^+(A)$ is strongly closed.
\end{theorem}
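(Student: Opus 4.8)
The plan is to take an arbitrary net $(\mu_s)_{s\in S}\subset\mathcal E^+(A)$ converging strongly to some $\mu_0\in\mathcal E^+$ (such a limit exists since $\mathcal E^+$ is strongly complete), and to show that $\mu_0\in\mathcal E^+(A)$, i.e.\ that $A^c$ is $\mu_0$-negligible. Since the $\alpha$-Riesz kernel is perfect, strong convergence on $\mathcal E^+$ entails vague convergence $\mu_s\to\mu_0$ (Section~\ref{sec-gen}); passing to a tail I may also assume $M:=\sup_{s\in S}\|\mu_s\|<\infty$, as $\|\mu_s\|\leqslant\|\mu_0\|+\|\mu_s-\mu_0\|$.

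The main tool is the capacitary mass estimate already exploited in the proof of Lemma~\ref{str-cl-1}: for every $\nu\in\mathcal E^+$ and every $\nu$-measurable set $E$ with $c_*(E)<\infty$, one has
\begin{equation*}\nu(E)\leqslant\sqrt{c_*(E)}\,\|\nu\|.\end{equation*}
Indeed, letting $\gamma_E$ be the inner capacitary measure on $E$ (so that $U^{\gamma_E}\geqslant1$ n.e.\ on $E$ and $\|\gamma_E\|^2=c_*(E)$, cf.\ \cite[Theorem~2.6]{L}), the $\nu$-measurable set where this inequality fails is of zero inner capacity, hence $\nu$-negligible by Lemma~\ref{l-negl} (recall that $\nu\in\mathcal E$ is $C$-absolutely continuous); integrating $1_E\leqslant U^{\gamma_E}$ against $\nu$ and applying the Cauchy--Schwarz inequality yields the claim.

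Now I bring in the quasiclosedness of $A$: for each $k\in\mathbb N$ choose a closed $F_k$ with $c^*(A\bigtriangleup F_k)<1/k$. The decomposition
\begin{equation*}A^c\subset F_k^c\cup(F_k\setminus A),\end{equation*}
valid because any $x\in A^c\cap F_k$ lies in $F_k\setminus A\subset A\bigtriangleup F_k$, splits the target set into an \emph{open} part and a part of small capacity. For the open part, since each $\mu_s$ is concentrated on $A$, the estimate above gives $\mu_s(F_k^c)=\mu_s(A\setminus F_k)\leqslant\sqrt{c^*(A\setminus F_k)}\,M\leqslant M/\sqrt k$; vague convergence then provides lower semicontinuity on the open set $F_k^c$ (the companion of (\ref{upper})), so that $\mu_0(F_k^c)\leqslant\liminf_{s\in S}\mu_s(F_k^c)\leqslant M/\sqrt k$. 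For the small part, choosing a Borel hull $E_k\supset F_k\setminus A$ with $c^*(E_k)=c^*(F_k\setminus A)$ (Borel sets being capacitable, \cite[Theorem~2.8]{L}) and applying the same estimate to $\mu_0$ yields $\mu_0^*(F_k\setminus A)\leqslant\mu_0(E_k)\leqslant\|\mu_0\|/\sqrt k$.

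Combining the two bounds, $\mu_0^*(A^c)\leqslant(M+\|\mu_0\|)/\sqrt k$ for every $k\in\mathbb N$, whence $\mu_0^*(A^c)=0$; thus $\mu_0$ is concentrated on $A$ and, being of finite energy, belongs to $\mathcal E^+(A)$. The quasicompact case is the special instance in which the $F_k$ may be taken compact. I expect the delicate point to be precisely the treatment of the open part: one cannot bound $\mu_0(F_k^c)$ directly by a capacitary argument, since the complement of a closed set need not have small capacity, so the uniform mass bound on the net must instead be transported to the limit through vague lower semicontinuity on open sets. This is exactly where the strong convergence of the net is used in an essential, combined way, as it simultaneously supplies the norm bound $M$ and the vague convergence.
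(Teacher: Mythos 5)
Your proof is correct, but it takes a genuinely different route from the paper's. The paper handles closed $A$ via the vague closedness of $\mathfrak M^+(A)$ and then, for quasiclosed $A$, imports Fuglede's quasi-topology machinery: the balls $\mathcal E^+_q=\{\mu\in\mathcal E^+:\ \|\mu\|\leqslant q\}$ are hereditary and vaguely compact, and \cite[Corollary~6.2]{Fu4} then shows that $\mathcal E^+_q\cap\mathfrak M^+(A)$ is vaguely compact, so the vague limit of a strongly bounded, strongly (hence vaguely) convergent sequence from $\mathcal E^+(A)$ cannot escape $\mathfrak M^+(A)$. You instead argue directly from the definition of quasiclosedness, splitting $A^c\subset F_k^c\cup(F_k\setminus A)$ and controlling the open piece by the uniform capacitary mass bound $\nu(E)\leqslant\sqrt{c_*(E)}\,\|\nu\|$ applied to the approximating net together with vague lower semicontinuity on open sets, and the small-capacity piece by the same bound applied to the limit measure through a Borel capacitable hull. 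That mass bound is exactly the device the paper itself deploys inside the proof of Lemma~\ref{str-cl-1} (the step around (\ref{ii})), so all the ingredients you use are already present in the paper; the net effect is a more elementary, self-contained proof that avoids the external compactness theorem from \cite{Fu4} at the cost of a somewhat longer argument. Your closing diagnosis of where strong convergence is essential (it simultaneously supplies the uniform norm bound and the vague convergence) matches the role it plays in the paper's proof. One minor point: in bounding $\mu_s(A\setminus F_k)$ you may freely replace $c^*$ by $c_*$, since it is the inner capacitary measure $\gamma_{A\setminus F_k}$ with $\|\gamma_{A\setminus F_k}\|^2=c_*(A\setminus F_k)\leqslant c^*(A\bigtriangleup F_k)$ that drives the estimate; as written the bound is still valid because $c_*\leqslant c^*$.
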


\begin{proof}
Given a sequence $(\mu_j)\subset\mathcal E^+(A)$ converging strongly (hence vaguely) to $\mu_0\in\mathcal E^+$, we only need to show that $\mu_0$ is concentrated on $A$. For closed $A$, the cone $\mathfrak M^+(A)$ is vaguely closed according to \cite[Section~III.2, Proposition~6]{B2}, whence $\mu_0\in\mathfrak M^+(A)$.

For quasiclosed $A$, note that for every $q\in(0,\infty)$, $\mathcal E^+_q:=\{\mu\in\mathcal E^+:\ \|\mu\|\leqslant q\}$
is hereditary \cite[Definition~5.2]{Fu4} and vaguely compact \cite[Lemma~2.5.1]{F1}, the Riesz kernels being strictly pseudo-definite, cf.\ \cite[p.~150]{F1}.
Since a strongly convergent sequence is certainly strongly bounded, $(\mu_j)\subset\mathcal E^+_{q'}$ for some $q'\in(0,\infty)$. Applying \cite[Corollary~6.2]{Fu4} with $\mathcal J:=\mathcal E^+_{q'}$ and $H:=A$, we thus derive that $\widetilde{\mathcal J}:=\mathcal E^+_{q'}\cap\mathfrak M^+(A)$ is vaguely compact, and, therefore, $(\mu_j)$ $({}\subset\widetilde{\mathcal J})$ has a vague limit point $\nu_0\in\widetilde{\mathcal J}$. The vague topology being Hausdorff, $\nu_0=\mu_0$, whence $\mu_0\in\widetilde{\mathcal J}\subset\mathfrak M^+(A)$ as desired.\end{proof}

\section{Proofs of the main results}\label{sec-proofs}

$\bullet$ Throughout Sections~\ref{sec-lemmas}--\ref{sec-pr2c2}, $\alpha\in(0,n)$ is {\it arbitrary}. Furthermore, an external field $f$ is of form (\ref{f1}), where $\psi$, $\vartheta$, and $\omega$ satisfy $(\mathcal P_1)$--$(\mathcal P_3)$ as well as (\ref{iv;}).

\subsection{Extremal measures}\label{sec-lemmas} A net $(\mu_s)_{s\in S}\subset\breve{\mathcal E}^+_f(A)$ is said to be {\it minimizing} (in Problem~\ref{pr-main}) if
\begin{equation}\label{min}
\lim_{s\in S}\,I_f(\mu_s)=w_f(A).
\end{equation}
We denote by $\mathbb M_f(A)$ the (nonempty) set of all those nets $(\mu_s)_{s\in S}$.

\begin{lemma}\label{la1}
  There is the unique $\xi_{A,f}\in\mathcal E^+$ such that, for each $(\mu_s)_{s\in S}\in\mathbb M_f(A)$,
  \begin{equation}\label{conv}
    \mu_s\to\xi_{A,f}\text{ \ strongly and vaguely in $\mathcal E^+$}.
  \end{equation}
This $\xi_{A,f}$ is said to be the extremal measure {\rm(}in Problem~{\rm\ref{pr-main}}{\rm)}.
\end{lemma}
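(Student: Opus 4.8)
The plan is to exploit the convex, "quadratic-plus-affine" structure of the Gauss functional together with the strong completeness of $\mathcal E^+$; the whole statement will follow from a single parallelogram-type inequality. First I would record the basic identity. Fix arbitrary $\mu,\nu\in\breve{\mathcal E}^+_f(A)$ and note that the midpoint $\tfrac12(\mu+\nu)$ again lies in $\breve{\mathcal E}^+_f(A)$: it is positive, concentrated on $A$, of unit total mass and finite energy, and $f$ is integrable with respect to it because it is integrable with respect to both $\mu$ and $\nu$, whence $\int f\,d\tfrac12(\mu+\nu)=\tfrac12\int f\,d\mu+\tfrac12\int f\,d\nu$. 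Writing $I_f(\mu)=\|\mu\|^2+2\int f\,d\mu$ and applying the parallelogram identity in the pre-Hilbert space $\mathcal E$, the affine term $\int f\,d\,\cdot$ cancels completely, and one obtains
\[\|\mu-\nu\|^2=2I_f(\mu)+2I_f(\nu)-4I_f\bigl(\tfrac12(\mu+\nu)\bigr)\leqslant2I_f(\mu)+2I_f(\nu)-4w_f(A),\]
the last step being the very definition of $w_f(A)$ as an infimum over $\breve{\mathcal E}^+_f(A)$. By Lemma~\ref{int} and the standing assumption (\ref{fin}), every quantity appearing here is a finite real number.

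Next I would deduce strong convergence for a fixed minimizing net. Given $(\mu_s)_{s\in S}\in\mathbb M_f(A)$, apply the displayed inequality with $\mu:=\mu_s$ and $\nu:=\mu_t$. Since $I_f(\mu_s)\to w_f(A)$ by (\ref{min}), the right-hand side tends to $0$ along the product directed set $S\times S$, so $(\mu_s)_{s\in S}$ is strongly Cauchy. Because the $\alpha$-Riesz kernel is perfect, the cone $\mathcal E^+$ is strongly complete (Section~\ref{sec-gen}), hence $(\mu_s)$ converges strongly to some $\xi\in\mathcal E^+$; as the strong topology on $\mathcal E^+$ is finer than the vague one, the same $\xi$ is also the vague limit. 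This already gives (\ref{conv}) for that particular net.

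It remains to show that the limit does not depend on the chosen minimizing net, which again rests on the same inequality. If $(\mu_s)_{s\in S}$ and $(\nu_t)_{t\in T}$ are two minimizing nets with strong limits $\xi$ and $\xi'$, I would apply the inequality to $\mu_s$ and $\nu_t$ over the product directed set $S\times T$; once more the right-hand side tends to $0$, so $\|\mu_s-\nu_t\|\to0$. Combining this with $\|\mu_s-\xi\|\to0$ and $\|\nu_t-\xi'\|\to0$ through the triangle inequality forces $\|\xi-\xi'\|=0$, and then $\xi=\xi'$ by the strict positive definiteness of the kernel. Setting $\xi_{A,f}:=\xi$ finishes the proof.

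The analytic content here is entirely standard — the parallelogram identity plus completeness — so I expect no serious difficulty. The only genuine care will lie in the bookkeeping with nets rather than sequences (passing to the product directed sets $S\times S$ and $S\times T$ for the Cauchy and uniqueness arguments) and in confirming that the affine term truly cancels, which is exactly where membership in $\breve{\mathcal E}^+_f(A)$ is used to guarantee both the finiteness of every $I_f$-value and the admissibility of the midpoint $\tfrac12(\mu+\nu)$.
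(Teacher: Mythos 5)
Your proposal is correct and follows essentially the same route as the paper: the same parallelogram identity applied to the midpoint $\tfrac12(\mu_s+\nu_t)\in\breve{\mathcal E}^+_f(A)$ yields $\|\mu_s-\nu_t\|^2\leqslant 2I_f(\mu_s)+2I_f(\nu_t)-4w_f(A)$, from which strong Cauchyness, convergence via the strong completeness of $\mathcal E^+$, independence of the limit from the chosen minimizing net, and vague convergence (the strong topology being finer) all follow exactly as you describe. The only cosmetic difference is order of presentation: the paper proves the two-net estimate over $S\times T$ first and then specializes to equal nets, whereas you do the single-net Cauchy argument first and then the uniqueness step.
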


\begin{proof} We shall first show that for any $(\mu_s)_{s\in S}$ and $(\nu_t)_{t\in T}$
from $\mathbb M_f(A)$,
\begin{equation}\label{ST}
\lim_{(s,t)\in S\times T}\,\|\mu_s-\nu_t\|=0,
\end{equation}
$S\times T$ being the directed product of the directed sets $S$ and
$T$. In fact, due to the convexity of the class $\breve{\mathcal E}^+_f(A)$, for any $(s,t)\in S\times T$ we have
\[4w_f(A)\leqslant4I_f\Bigl(\frac{\mu_s+\nu_t}{2}\Bigr)=\|\mu_s+\nu_t\|^2+4\int f\,d(\mu_s+\nu_t).\]
Applying the parallelogram identity in the pre-Hilbert space $\mathcal E$ therefore gives
\[0\leqslant\|\mu_s-\nu_t\|^2\leqslant-4w_f(A)+2I_f(\mu_s)+2I_f(\nu_t),\]
which together with (\ref{fin}) and (\ref{min}) yields (\ref{ST}) by letting $(s,t)$ range through $S\times T$.

Taking the two nets in (\ref{ST}) to be equal, we infer that every $(\nu_t)_{t\in T}\in\mathbb M_f(A)$ is strong Cauchy. The cone $\mathcal E^+$ being strongly complete, this $(\nu_t)_{t\in T}$ converges strongly to some $\xi_{A,f}\in\mathcal E^+$. The same (unique) $\xi_{A,f}$ also serves as the strong limit of any other $(\mu_s)_{s\in S}\in\mathbb M_f(A)$, which is obvious from (\ref{ST}). The strong topology on $\mathcal E^+$ being finer than the vague topology on $\mathcal E^+$, $(\mu_s)_{s\in S}$ must converge to $\xi_{A,f}$ also vaguely.
\end{proof}

\begin{remark}\label{tmass}In general, the extremal measure $\xi_{A,f}$ is {\it not} concentrated on $A$. What is clear so far is that
\begin{equation*}\label{Eq}
\xi_{A,f}\in\mathcal E'(A)\subset\mathcal E^+(\overline{A}),
\end{equation*}
the former relation being obvious from (\ref{conv}), and the latter from (\ref{belong}).\end{remark}

\begin{remark}\label{tmass'}Another consequence of (\ref{conv}) is that
\begin{equation}\label{11}
\xi_{A,f}(\mathbb R^n)\leqslant1,
\end{equation}
$\mu\mapsto\mu(\mathbb R^n)$ being vaguely l.s.c.\ on $\mathfrak M^+$ (Lemma~\ref{lemma-semi} with $F:=\mathbb R^n$ and $g:=1$). {\it Equality necessarily prevails in {\rm(\ref{11})} if $A$ is compact} \cite[Section~III.1.9, Corollary~3]{B2}.
\end{remark}

\begin{corollary}\label{cor-sol-extr}
  If the solution $\lambda_{A,f}$ to Problem~{\rm\ref{pr-main}} exists, then necessarily
  \begin{equation}\label{111}\lambda_{A,f}=\xi_{A,f}.\end{equation}
\end{corollary}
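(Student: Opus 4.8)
The plan is to prove Corollary~\ref{cor-sol-extr} by exhibiting the solution $\lambda_{A,f}$ (assumed to exist) as a particular minimizing net, and then invoking the uniqueness of the extremal measure established in Lemma~\ref{la1}. First I would observe that a single measure $\mu\in\breve{\mathcal E}^+_f(A)$ may be regarded as a constant net indexed by any directed set. Taking $\mu:=\lambda_{A,f}$, the defining property $I_f(\lambda_{A,f})=w_f(A)$ means precisely that the constant net $(\lambda_{A,f})_{s\in S}$ satisfies the minimizing condition (\ref{min}), so that $(\lambda_{A,f})_{s\in S}\in\mathbb M_f(A)$. This is the key bridge: the minimizer, viewed as a trivial net, lands in the class of minimizing nets to which Lemma~\ref{la1} applies.

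Next I would apply Lemma~\ref{la1} to this constant net. By (\ref{conv}), the net $(\lambda_{A,f})_{s\in S}$ must converge strongly (and vaguely) to $\xi_{A,f}$. But a constant net converges, in any Hausdorff topology, only to its constant value; since the strong topology on $\mathcal E^+$ is Hausdorff (it comes from the energy norm $\|\cdot\|$, the kernel being strictly positive definite), the strong limit of $(\lambda_{A,f})_{s\in S}$ is $\lambda_{A,f}$ itself. Uniqueness of limits then forces
\[
\lambda_{A,f}=\xi_{A,f},
\]
which is exactly (\ref{111}).

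I do not anticipate a serious obstacle here; the corollary is essentially an immediate consequence of Lemma~\ref{la1} once one recognizes that the solution furnishes a minimizing net. The only point requiring a word of care is the implicit claim that $\lambda_{A,f}\in\breve{\mathcal E}^+_f(A)$ so that it genuinely qualifies as a member of $\mathbb M_f(A)$; but this is guaranteed by the very statement of Problem~\ref{pr-main}, whose solution is sought within $\breve{\mathcal E}^+_f(A)$. Thus the argument is a short formal deduction: promote the minimizer to a constant net, apply the convergence (\ref{conv}) from Lemma~\ref{la1}, and conclude by the Hausdorff property of the strong topology.
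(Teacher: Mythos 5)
Your argument is correct and coincides with the paper's own proof: the paper likewise regards the solution as the trivial (constant) minimizing sequence $(\lambda_{A,f})\in\mathbb M_f(A)$, applies Lemma~\ref{la1} to conclude strong convergence to $\xi_{A,f}$, and then invokes the Hausdorff property of the strong topology to identify the two limits. Nothing is missing.
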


\begin{proof}
The trivial sequence $(\lambda_{A,f})$ being obviously minimizing:
\[(\lambda_{A,f})\in\mathbb M_f(A),\]
it must converge strongly in $\mathcal E^+$ to the extremal measure $\xi_{A,f}$ (Lemma~\ref{la1}), as well as to $\lambda_{A,f}$. Since the strong topology on $\mathcal E$ is Hausdorff, (\ref{111}) follows.
\end{proof}

\subsection{Proof of Theorem~\ref{th2'}}\label{sec-pr1} Since under the assumptions of the theorem, $\mathcal E^+(A)$ is strongly closed, see $(\mathcal P_4)$, so must be $\breve{\mathcal E}^+(A)$ (Theorem~\ref{quasi-complete}). Fix a minimizing net $(\mu_s)\in\mathbb M_f(A)$; according to Lemma~\ref{la1}, it converges strongly and vaguely to the extremal measure $\xi_{A,f}$. As $(\mu_s)\subset\breve{\mathcal E}^+_f(A)$, the strong closedness of $\breve{\mathcal E}^+(A)$ yields
\begin{equation}\label{exm'}
\xi_{A,f}\in\breve{\mathcal E}^+(A).
\end{equation}
Applying Lemma~\ref{l-str-sem} we further obtain
\begin{equation}\label{en-conv-xi}
 I_f(\xi_{A,f})\leqslant\lim_{s\in S}\,I_f(\mu_s)=w_f(A),
\end{equation}
the equality being valid by virtue of (\ref{min}). As $I_f(\xi_{A,f})>-\infty$ (Lemma~\ref{int}), we infer from (\ref{fin}), (\ref{exm'}), and (\ref{en-conv-xi}) that, actually,
\begin{equation}\label{incl}
\xi_{A,f}\in\breve{\mathcal E}_f^+(A),
\end{equation}
whence $I_f(\xi_{A,f})\geqslant w_f(A)$, which combined with (\ref{en-conv-xi}) gives
\[I_f(\xi_{A,f})=w_f(A).\]
This together with (\ref{incl}) shows that the extremal measure $\xi_{A,f}$ serves as the solution $\lambda_{A,f}$ to Problem~\ref{pr-main}, thereby completing the proof of the theorem.

\subsection{Proof of Theorem~\ref{th4'}}\label{sec-pr2} We first show that for any $g\in\Phi(\overline{A})$ and $\mu\in\mathfrak M^+(A)$,
\begin{equation}\label{limg}
 \int g\,d\mu=\lim_{K\uparrow A}\,\int g\,d\mu|_K,
\end{equation}
$\mu|_K$ being the trace of $\mu$ to $K$. Indeed, if $g\geqslant0$, this is implied by \cite[Lemma~1.2.2]{F1}, noting that for  $\mu\in\mathfrak M^+(A)$, the set $A$ is $\mu$-measurable and, moreover, $\mu=\mu|_A$. Otherwise, $\overline{A}$ must be compact, and the claim follows by applying (\ref{limg}) to a function $g':=g+c\geqslant0$ on $\overline{A}$, where $c\in(0,\infty)$, and then by making use of the fact that
\[\lim_{K\uparrow A}\,\mu(K)=\mu(A)<\infty,\]
the measures on (compact) $\overline{A}$ being bounded.

For any $\mu\in\breve{\mathcal E}_f^+(A)$, $\mu(K)\uparrow1$ as $K\uparrow A$. Applying (\ref{limg}) to each of the $\mu$-int\-egr\-able functions $\kappa_\alpha$, $\psi$, $U^{\vartheta^+}$, $U^{\vartheta^-}$, $U^{\omega^+}$, and $U^{\omega^-}$ therefore gives
\begin{equation*}
I_f(\mu)=\lim_{K\uparrow A}\,I_f(\mu|_K)=\lim_{K\uparrow A}\,I_f(\nu_K)\geqslant\lim_{K\uparrow A}\,w_f(K),
\end{equation*}
where $\nu_K:=\mu|_K/\mu(K)\in\breve{\mathcal E}_f^+(K)$, $K\geqslant K_0$. Letting $\mu$ range over $\breve{\mathcal E}_f^+(A)$ we thus get
\[w_f(A)\geqslant\lim_{K\uparrow A}\,w_f(K),\]
whence
\begin{equation}\label{limwe}
w_f(K)\downarrow w_f(A)\text{ \ as $K\uparrow A$},
\end{equation}
the net $\bigl(w_f(K)\bigr)_{K\in\mathfrak C_A}$ being decreasing and bounded from below by $w_f(A)$.

In view of (\ref{fin}), there exists, therefore, $K_0\in\mathfrak C_A$ such that, for each $K\geqslant K_0$, $w_f(K)$ is finite, and hence, according to Corollary~\ref{qcomp}, Problem~\ref{pr-main} with $A:=K$ is solvable. Noting from (\ref{limwe}) that those solutions $\lambda_{K,f}$ form a minimizing net:
\[(\lambda_{K,f})_{K\geqslant K_0}\in\mathbb M_f(A),\]
we obtain, by virtue of Lemma~\ref{la1},
\begin{equation}\label{eq-conv}
 \lambda_{K,f}\to\xi_{A,f}\text{ \ strongly and vaguely as $K\uparrow A$}.
\end{equation}
Since $\lambda_{A,f}$ exists by assumption, $\xi_{A,f}=\lambda_{A,f}$ according to Corollary~\ref{cor-sol-extr}, which substituted into (\ref{eq-conv}) proves (\ref{conv2}).

Thus, by (\ref{conv2}),
\[\lim_{K\uparrow A}\,\|\lambda_{K,f}\|^2=\|\lambda_{A,f}\|^2.\]
On the other hand, (\ref{limwe}) can be rewritten in the form
\[\lim_{K\uparrow A}\,\Bigl(\|\lambda_{K,f}\|^2+2\int f\,d\lambda_{K,f}\Bigr)=\|\lambda_{A,f}\|^2+2\int f\,d\lambda_{A,f}.\]
The last two relations combined imply (\ref{conv3}), cf.\ (\ref{cc}), thereby completing the proof.

\subsection{Proof of Theorem~\ref{th4'c}}\label{sec-pr2c}
Let $A$ be the union of an increasing sequence of universally measurable sets $A_j$.
For any l.s.c.\ $g:\overline{A}\to[0,\infty]$ and any $\mu\in\mathfrak M^+(A)$,
\begin{equation}\label{limg1}
 \int g\,d\mu=\lim_{j\to\infty}\,\int1_{A_j}g\,d\mu=\lim_{j\to\infty}\,\int g\,d\mu|_{A_j},
\end{equation}
where the former equality holds true by the monotone convergence theorem \cite[Section~IV.1, Theorem~3]{B2}
applied to the increasing sequence $(1_{A_j}g)$ of positive functions with the upper envelope $g$, and the latter by \cite{E2}, Propositions~4.14.1(b) and 4.14.6(3).

If now $g\in\Phi(\overline{A})$ and $g\ngeqslant0$, then $\overline{A}$ must be compact, and the proof of (\ref{limg1}) runs as usual --- namely, by replacing $g$ by $g':=g+c$, where $c\in(0,\infty)$, and then by making use of the fact that, due to (\ref{limg1}) with $g:=1$,
\[\lim_{j\to\infty}\,\mu(A_j)=\mu(A)<\infty.\]

Having thus established (\ref{limg1}) for any $g\in\Phi(\overline{A})$, we further arrive at (\ref{cont-count1}) by means of the same arguments as in the proof of (\ref{limwe}).

The rest of the proof runs similarly to that in Section~\ref{sec-pr2}.
To be precise, since the minimizers $\lambda_{A_j,f}$ and $\lambda_{A,f}$ exist by assumption, it follows from (\ref{cont-count1}) that $\lambda_{A_j,f}$ form a minimizing sequence,
which, according to Lemma~\ref{la1} and Corollary~\ref{cor-sol-extr}, must converge to $\lambda_{A,f}$ strongly and vaguely. This establishes (\ref{cont-count12}). Finally, (\ref{cont-count1}) and (\ref{cont-count12}) result in (\ref{cont-count13}) in exactly the same way as in the last paragraph of Section~\ref{sec-pr2}.

\subsection{Proof of Theorem~\ref{th4'c2}}\label{sec-pr2c2} We begin by noting that $A$, being a countable intersection of quasiclosed sets $A_j$, is likewise quasiclosed \cite[Lemma~2.3]{F71}. Since $(\mathcal P_4)$ is fulfilled for quasiclosed sets (Theorem~\ref{l-quasi}), applying Theorem~\ref{th2'} shows that the minimizers $\lambda_{A_j,f}$, $j\geqslant j_0$, as well as $\lambda_{A,f}$ do exist.

By the convexity of $\breve{\mathcal E}^+_f(A_j)$, $(\lambda_{A_j,f}+\lambda_{A_k,f})/2\in\breve{\mathcal E}^+_f(A_j)$ for all  $k\geqslant j$, hence
\[4w_f(A_j)\leqslant4I_f\biggl(\frac{\lambda_{A_j,f}+\lambda_{A_k,f}}{2}\biggr)=\|\lambda_{A_j,f}+\lambda_{A_k,f}\|^2+4\int f\,d(\lambda_{A_j,f}+\lambda_{A_k,f}).\]
Using the parallelogram identity in the pre-Hilbert space $\mathcal E$ therefore gives
\begin{equation}\label{fund}
\|\lambda_{A_j,f}-\lambda_{A_k,f}\|^2\leqslant-4w_f(A_j)+2I_f(\lambda_{A_j,f})+2I_f(\lambda_{A_k,f})=2w_f(A_k)-2w_f(A_j).
\end{equation}
The sequence $\bigl(w_f(A_k)\bigr)$ being increasing and bounded from above by $w_f(A)\in\mathbb R$, we infer from (\ref{fund}) that $(\lambda_{A_k,f})_{k\geqslant j}\subset\breve{\mathcal E}_f^+(A_j)$ is strong Cauchy, and hence it converges strongly and vaguely to some $\lambda_0\in\mathcal E^+$. We claim that
$\lambda_0=\lambda_{A,f}$.

Since $A_j$ is quasiclosed and of finite inner capacity, $\breve{\mathcal E}^+(A_j)$ is strongly closed (Theorems~\ref{quasi-complete} and \ref{l-quasi}), and therefore $\lambda_0\in\breve{\mathcal E}^+(A_j)$ for all $j\geqslant j_0$. Being thus a countable union of $\lambda_0$-neg\-lig\-ible sets $A_j^c$, the set $A^c$ is likewise $\lambda_0$-neg\-lig\-ible, whence
\begin{equation}\label{funddd}\lambda_0\in\breve{\mathcal E}^+(A).\end{equation}
Furthermore,
\begin{equation}\label{fundd}
-\infty<I_f(\lambda_0)\leqslant\liminf_{j\to\infty}\,I_f(\lambda_{A_j,f})=\lim_{j\to\infty}\,w_f(A_j)\leqslant w_f(A)<\infty,
\end{equation}
where the first and the second inequalities hold true by virtue of Lemmas~\ref{int} and \ref{l-str-sem}, respectively. In view of (\ref{funddd}) and (\ref{fundd}),
\begin{equation}\label{www}
 \lambda_0\in\breve{\mathcal E}_f^+(A),
\end{equation}
whence $I_f(\lambda_0)\geqslant w_f(A)$, which substituted into (\ref{fundd}) shows that, actually,
\begin{equation}\label{j}
 \lim_{j\to\infty}\,w_f(A_j)=w_f(A)=I_f(\lambda_0).
\end{equation}
As seen from (\ref{www}) and (\ref{j}), the measure $\lambda_0$, the strong and the vague limit of the sequence $(\lambda_{A_j,f})_{j\geqslant j_0}$, serves, indeed, as $\lambda_{A,f}$. To complete the proof, it remains to verify that $c_{A_j,f}\to c_{A,f}$ as $j\to\infty$, but this follows from the above in exactly the same manner as in the last paragraph of Section~\ref{sec-pr2}.

\subsection{The case of $\alpha\in(0,2]$. Auxiliary results}\label{sec-lemmas2} In the remainder of the paper,
\[\alpha\in(0,2].\]

Among the variety of equivalent definitions of inner $\alpha$-Riesz balayage
(see \cite{Z-bal,Z-bal2}, cf.\ also \cite{Z-arx1}--\cite{Z-arx}), here we have chosen the following one to start with.

\begin{definition}\label{def-bal}
  For any $\mu\in\mathfrak M^+$ and any $A\subset\mathbb R^n$, {\it the inner $\alpha$-Riesz balayage} $\mu^A\in\mathfrak M^+$ is the vague limit of the sequence $(\mu_j^A)\subset\mathcal E^+$, where $(\mu_j)\subset\mathcal E^+$ is such that
  \begin{equation}\label{up}
  U^{\mu_j}\uparrow U^\mu\text{ \ pointwise on $\mathbb R^n$ as $j\to\infty$},
  \end{equation}
while $\mu_j^A$ denotes the only measure in $\mathcal E'(A)$, the strong closure of $\mathcal E^+(A)$ (Section~\ref{secqu}), having the property $U^{\mu_j^A}=U^{\mu_j}$ n.e.\ on $A$.\footnote{A sequence $(\mu_j)\subset\mathcal E^+$ satisfying (\ref{up}) does exist (see \cite[p.~272]{L}, cf.\ \cite[p.~257]{Ca2}). Furthermore, for each $\chi\in\mathcal E^+$, there exists the unique measure $\chi^A\in\mathcal E'(A)$ such that $U^{\chi^A}=U^\chi$ n.e.\ on $A$ \cite[Theorem~3.1(c)]{Z-arx-22}; it can alternatively be characterized as the orthogonal projection of $\chi$ in the pre-Hilbert space $\mathcal E$ onto the convex, strongly complete cone $\mathcal E'(A)\subset\mathcal E^+$ \cite[Theorem~3.1(b)]{Z-arx-22}. Regarding the concept of orthogonal projection in a pre-Hilbert space, see e.g.\ \cite[Theorem~1.12.3]{E2}.\label{F}}
\end{definition}

The inner balayage $\mu^A$ thus defined does exist, is unique, and it does not depend on the choice of the above sequence $(\mu_j)$, cf.\ \cite[Section~3.3]{Z-bal}. Furthermore,
\begin{align}
U^{\mu^A}&=U^\mu\text{ \ n.e.\ on $A$},\label{eq-bal}\\
U^{\mu^A}&\leqslant U^\mu\text{ \ on $\mathbb R^n$}.\notag
\end{align}
The same $\mu^A$ is uniquely characterized within $\mathfrak M^+$ by the symmetry relation
\[I(\mu^A,\chi)=I(\mu,\chi^A)\text{ \ for all $\chi\in\mathcal E^+$,}\]
where $\chi^A$ denotes the only measure in $\mathcal E'(A)$ with $U^{\chi^A}=U^{\chi}$ n.e.\ on $A$ (footnote~\ref{F}).

\begin{remark}
In general, $\mu^A$ is {\it not\/} concentrated on $A$ (unless, of course, $A$ is closed). It is also worth noting that, even for closed $A$, there may exist infinitely many $\nu\in\mathfrak M^+(A)$ with $U^\nu=U^\mu$ n.e.\ on $A$; hence, (\ref{eq-bal}) {\it cannot}, in general, serve as a characteristic property of balayage. Compare with Theorem~\ref{th-bal}(iii$_1$).
\end{remark}

$\bullet$ In all that follows, we assume that $\mathcal E^+(A)$ is strongly closed (or in particular that $A$ is quasiclosed), so that $\mathcal E^+(A)=\mathcal E'(A)$, and that $f$ is of form (\ref{f3'}), i.e.
\begin{equation*}
f=-U^\tau-U^\sigma=-U^\delta,
\end{equation*}
where $\tau\in\mathcal E^+$, $\sigma\in\mathfrak M^+$ is a bounded measure with $d(S_\sigma,A)>0$, and
$\delta:=\tau+\sigma$. That is, $(\mathcal P_4)$, $(\mathcal P_2')$, and $(\mathcal P_3')$ are required to hold (see the beginning of Section~\ref{subs-main2}).

$\P$ It is crucial to the analysis below that for these $A$ and $\delta$, the inner balayage $\delta^A$ is a measure of {\it finite} energy, {\it concentrated} on $A$ (see Theorem~\ref{th-bal} for details).

\begin{theorem}\label{th-bal}
For the above $A$ and $\delta$, the inner balayage $\delta^A$ can equivalently be determined by means of any one of the following assertions.
\begin{itemize}
\item[{\rm(i$_1$)}] $\delta^A$ is the only measure of minimum energy in the class
\begin{equation}\label{Ga}
 \Gamma_{A,\delta}:=\bigl\{\nu\in\mathfrak M^+:\ U^\nu\geqslant U^\delta\text{ \ n.e.\ on $A$}\bigr\}.
 \end{equation}
That is, $\delta^A\in\Gamma_{A,\delta}$ and
 \begin{equation*}\label{delta-alt}
  I(\delta^A)=\min_{\nu\in\Gamma_{A,\delta}}\,I(\nu).
  \end{equation*}
  \item[{\rm(ii$_1$)}] $\delta^A$ is the only measure of minimum potential in the class $\Gamma_{A,\delta}$, introduced by means of {\rm(\ref{Ga})}. That is, $\delta^A\in\Gamma_{A,\delta}$  and
  \begin{equation*}\label{Gap}U^{\delta^A}=\min_{\nu\in\Gamma_{A,\delta}}\,U^\nu\text{ \ on\/ $\mathbb R^n$}.\end{equation*}
  \item[{\rm(iii$_1$)}] $\delta^A$ is the only measure in the class $\mathcal E^+(A)$ having the property
  \[U^{\delta^A}=U^\delta\text{ \  n.e.\ on $A$.}\]
    \item[{\rm(iv$_1$)}] $\delta^A$ can alternatively be characterized by any one of the three limit relations
  \begin{gather}
  \label{char1}\delta^K\to\delta^A\text{ \ strongly in $\mathcal E^+$ as $K\uparrow A$},\\
   \label{char2}\delta^K\to\delta^A\text{ \ vaguely in $\mathfrak M^+$ as $K\uparrow A$},\\
   U^{\delta^K}\uparrow U^{\delta^A}\text{ \ pointwise on $\mathbb R^n$ as $K\uparrow A$},\notag
  \end{gather}
  where $\delta^K$ denotes the only measure in $\mathcal E^+(K)$ with $U^{\delta^K}=U^\delta$ n.e.\ on $K$.
\end{itemize}
\end{theorem}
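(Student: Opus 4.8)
The plan is to first establish the preliminary claim marked $\P$ --- that $\delta^A$ is of finite energy and concentrated on $A$ --- since every one of (i$_1$)--(iv$_1$) rests on it, and then to read off the four characterizations from the defining relations of balayage together with the pre-Hilbert structure of $\mathcal E$. By Definition~\ref{def-bal} and the additivity of inner balayage (immediate from the linearity of the orthogonal projection onto $\mathcal E'(A)$ applied to the approximating sequences), write $\delta^A=\tau^A+\sigma^A$. Since $\tau\in\mathcal E^+$, its balayage $\tau^A$ is precisely the orthogonal projection of $\tau$ onto the strongly complete cone $\mathcal E'(A)=\mathcal E^+(A)$ (footnote~\ref{F}), so $\tau^A\in\mathcal E^+(A)$ is of finite energy and concentrated on $A$. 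The delicate summand is $\sigma^A$, and here the whole force of $(\mathcal P_3')$ enters: the distance condition yields $U^\sigma\leqslant d(S_\sigma,A)^{\alpha-n}\sigma(\mathbb R^n)=:C<\infty$ on all of $A$. Realizing $\sigma^A$ as the vague limit of $\sigma_j^A\in\mathcal E^+(A)$, where $\sigma_j\in\mathcal E^+$ satisfy $U^{\sigma_j}\uparrow U^\sigma$ and may be chosen with $\sigma_j(\mathbb R^n)\leqslant\sigma(\mathbb R^n)$, each $\sigma_j^A$ is $C$-absolutely continuous, so by Lemma~\ref{l-negl} the relation $U^{\sigma_j^A}=U^{\sigma_j}\leqslant U^\sigma\leqslant C$ n.e.\ on $A$ holds $\sigma_j^A$-a.e.; hence $\|\sigma_j^A\|^2=\int U^{\sigma_j^A}\,d\sigma_j^A\leqslant C\,\sigma_j^A(\mathbb R^n)\leqslant C\,\sigma(\mathbb R^n)$. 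Thus $(\sigma_j^A)$ is strongly bounded and, being increasing in potential, strong Cauchy; the kernel being perfect and $\mathcal E^+(A)$ strongly closed, $\sigma_j^A$ converges strongly to a measure of $\mathcal E^+(A)$ that coincides with its vague limit $\sigma^A$. Therefore $\delta^A\in\mathcal E^+(A)$ is of finite energy and concentrated on $A$.

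Given this, (iii$_1$) is almost immediate: $\delta^A\in\mathcal E^+(A)$ and $U^{\delta^A}=U^\delta$ n.e.\ on $A$ by (\ref{eq-bal}); if $\nu\in\mathcal E^+(A)$ also satisfies $U^\nu=U^\delta$ n.e.\ on $A$, then $U^{\nu-\delta^A}=0$ n.e.\ on $A$, so (both measures being concentrated on $A$ and $C$-absolutely continuous) $\|\nu-\delta^A\|^2=\int U^{\nu-\delta^A}\,d(\nu-\delta^A)=0$, whence $\nu=\delta^A$ by strict positive definiteness. For (i$_1$) I would run the standard Gauss-type argument. First $\delta^A\in\Gamma_{A,\delta}$. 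Using $U^{\delta^A}=U^\delta$ $\delta^A$-a.e.\ one gets $I(\delta^A)=\langle\delta^A,\delta\rangle=\int U^\delta\,d\delta^A$, while for any finite-energy $\nu\in\Gamma_{A,\delta}$ the symmetry of mutual energy and $U^\nu\geqslant U^\delta$ n.e.\ (hence $\delta^A$-a.e.) on $A$ give $\langle\delta^A,\nu\rangle=\int U^\nu\,d\delta^A\geqslant\int U^\delta\,d\delta^A=I(\delta^A)$, i.e.\ $\langle\delta^A,\nu-\delta^A\rangle\geqslant0$. Expanding $I(\nu)=\|\delta^A+(\nu-\delta^A)\|^2$ yields $I(\nu)-I(\delta^A)=2\langle\delta^A,\nu-\delta^A\rangle+\|\nu-\delta^A\|^2\geqslant\|\nu-\delta^A\|^2\geqslant0$, with equality exactly for $\nu=\delta^A$; infinite-energy members of $\Gamma_{A,\delta}$ are irrelevant to the minimum. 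This proves both the minimality and the uniqueness in (i$_1$).

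Assertion (ii$_1$) is where the restriction $\alpha\in(0,2]$ is used: for any $\nu\in\Gamma_{A,\delta}$ we have $U^\nu\geqslant U^\delta=U^{\delta^A}$ n.e.\ on $A$, hence $\delta^A$-a.e.\ since $S(\delta^A)\subset A$; as $\delta^A$ is of finite energy, the principle of domination (valid precisely for $0<\alpha\leqslant2$) upgrades this to $U^{\delta^A}\leqslant U^\nu$ everywhere on $\mathbb R^n$, so $\delta^A$ minimizes the potential, uniqueness following because equality of potentials everywhere forces equality of measures. Finally, (iv$_1$) is the compact-exhaustion description of inner balayage: the monotonicity $U^{\delta^K}\uparrow$ as $K\uparrow A$ and the strong Cauchy property of $(\delta^K)_{K\in\mathfrak C_A}$ are obtained exactly as in the first paragraph (and as in the argument of Lemma~\ref{str-cl-1}), so perfectness of the kernel yields strong, hence vague, convergence to a limit whose potential is $\lim_{K\uparrow A}U^{\delta^K}$; this limit is identified with $\delta^A$ through (iii$_1$).

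I expect the main obstacle to be the preliminary claim $\P$, and specifically the finite-energy bound for $\sigma^A$: one must exploit the distance hypothesis $(\mathcal P_3')$ to control $U^\sigma$ on $A$ and the hypothesis $(\mathcal P_4)$ to keep the strong limit concentrated on $A$, being careful throughout that $\sigma$ itself has infinite energy in general, so that $\sigma^A$ cannot be treated as a projection of a finite-energy measure. Once $\delta^A\in\mathcal E^+(A)$ of finite energy is secured, the four equivalences are routine consequences of strict positive definiteness, the energy-minimization inequality, and the domination principle.
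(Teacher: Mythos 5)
Your argument is correct, but it reaches the crucial preliminary fact $\delta^A\in\mathcal E^+(A)$ by a genuinely different road than the paper, and it proves (i$_1$)--(iv$_1$) from scratch where the paper quotes prior work. The paper does not split $\delta^A=\tau^A+\sigma^A$; instead it first sweeps $\delta$ onto the closed set $\overline{A}$, shows that $\delta':=\delta^{\overline{A}}$ has finite energy --- the point being that $\sigma^{\overline{A}}$ is $C$-absolutely continuous because $S_\sigma\cap\overline{A}=\varnothing$, so $U^{\sigma^{\overline{A}}}=U^\sigma$ holds $\sigma^{\overline{A}}$-a.e.\ and integration gives $\int U^{\sigma^{\overline{A}}}\,d\sigma^{\overline{A}}\leqslant d(S_\sigma,A)^{\alpha-n}\sigma(\mathbb R^n)^2$ --- and then invokes the identity $\delta^A=(\delta^{\overline{A}})^A$ (``balayage with a rest'') to reduce everything to the balayage of a single finite-energy measure, for which all four characterizations are read off from \cite{Z-arx-22} (Definition~3.1 and Theorem~3.1). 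You instead control $\sigma^A$ directly through the approximating sequence $U^{\sigma_j}\uparrow U^\sigma$ of Definition~\ref{def-bal}, extracting the same numerical bound $\|\sigma_j^A\|^2\leqslant d(S_\sigma,A)^{\alpha-n}\sigma(\mathbb R^n)^2$ from the distance hypothesis and concluding by perfectness and the strong closedness of $\mathcal E^+(A)$; you then give the orthogonal-projection/Gauss argument for (i$_1$), the domination principle for (ii$_1$), and the monotone exhaustion for (iv$_1$) by hand. Both routes use $(\mathcal P_3')$ and $(\mathcal P_4)$ at exactly the same junctures; yours is longer but self-contained and makes visible where $\alpha\leqslant2$ actually enters (the domination principle, and the mass inequality $\sigma_j^A(\mathbb R^n)\leqslant\sigma_j(\mathbb R^n)$), whereas the paper's is a short reduction once the machinery of \cite{Z-arx-22} is granted. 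The only points where you are terser than you should be are routine and not gaps: the existence of approximants $\sigma_j$ with $\sigma_j(\mathbb R^n)\leqslant\sigma(\mathbb R^n)$, and, in (iv$_1$), the verification that the strong limit of $(\delta^K)_{K\in\mathfrak C_A}$ has potential equal to $U^\delta$ n.e.\ on $A$, which requires passing to a sequence and using the strengthened countable subadditivity of inner capacity.
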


 \begin{proof} To simplify notations, for $\mu\in\mathfrak M^+$ write $\mu':=\mu^{\overline{A}}$.
  We begin by showing that
  \begin{equation}\label{del-f}
 \delta'\in\mathcal E^+(\overline{A}).
\end{equation}
Since obviously $\tau'\in\mathcal E^+(\overline{A})$, this reduces to $\sigma'\in\mathcal E^+(\overline{A})$. As $S_\sigma\cap\overline{A}=\varnothing$, $\sigma'$ is $C$-ab\-s\-ol\-utely continuous (see \cite[Corollary~3.19]{FZ} or, more generally, \cite[Corollary~5.2]{Z-bal2}), and therefore
 $U^{\sigma'}=U^\sigma$ $\sigma'$-a.e.\ on $\mathbb R^n$, cf.\ (\ref{eq-bal}).
 This implies by integration\footnote{Here we use the fact that for any $\mu\in\mathfrak M^+$ and any $Q\subset\mathbb R^n$,  $\mu^Q(\mathbb R^n)\leqslant\mu(\mathbb R^n)$ \cite[Corollary~4.9]{Z-bal}.\label{f-cor}}
 \[\int U^{\sigma'}\,d\sigma'=\int U^\sigma\,d\sigma'\leqslant d(S_{\sigma},A)^{\alpha-n}\sigma(\mathbb R^n)^2<\infty,\]
 cf.\ (\ref{f3''}), whence the claim.

 But, by \cite[Corollary~4.2]{Z-bal} ("balayage with a rest"),\footnote{See also Landkof's book \cite[p.~264]{L}, where the sets in question were assumed to be Borel, while the measures bounded.}
 \[\delta^{A}=(\delta')^{A}.\]
 Therefore, applying \cite[Theorem~3.1(c)]{Z-arx-22} to $\delta'\in\mathcal E^+$, cf.\ (\ref{del-f}), we obtain (iii$_1$) by making use of the equality $\mathcal E^+(A)=\mathcal E'(A)$ as well as of the fact that for any $\nu\in\mathfrak M^+$,
 \begin{equation*}\label{impl}
 U^\nu=U^{\delta'}\text{ \ n.e.\ on $A$}\Longrightarrow U^\nu=U^\delta\text{ \ n.e.\ on $A$},
 \end{equation*}
 which in turn is implied by (\ref{eq-bal}) (applied to $\overline{A}$ and $\delta$) and the strengthened version of countable subadditivity for inner capacity (footnote~\ref{foot-sub}).

 The remaining assertions (i$_1$), (ii$_1$), and (iv$_1$) follow from \cite{Z-arx-22} (Definition~3.1 and Theorem~3.1, (a) and (d))
 in the same manner as above.
\end{proof}

\begin{lemma}\label{LL}
 For the above $A$ and $\delta$, we have
 \begin{equation}\label{ext1}
 \xi_{A,f}\in\mathcal E^+(A)\text{ \ and \ }I_f(\xi_{A,f})=w_f(A),
 \end{equation}
 $\xi_{A,f}$ being the extremal measure. Problem~{\rm\ref{pr-main}} is therefore solvable if and only if
\begin{equation}\label{ext2}\xi_{A,f}(\mathbb R^n)=1,\end{equation}
and in the affirmative case $\xi_{A,f}=\lambda_{A,f}$.
\end{lemma}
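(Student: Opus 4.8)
The plan is to exploit the special additive structure $f=-U^\delta$ in order to convert the $f$-weighted energy on $\mathcal E^+(A)$ into an honest squared distance in the pre-Hilbert space $\mathcal E$, the center of which is the inner balayage $\delta^A$. The decisive advantage over the general framework of Lemma~\ref{l-str-sem} is that this will make $I_f$ \emph{strongly continuous} (not merely strongly lower semicontinuous) on $\mathcal E^+(A)$, and hence the argument will go through \emph{without} assuming $c_*(A)<\infty$. First I would record that $\xi_{A,f}\in\mathcal E^+(A)$: by Remark~\ref{tmass} one always has $\xi_{A,f}\in\mathcal E'(A)$, and under the present standing hypothesis $\mathcal E^+(A)=\mathcal E'(A)$, so the inclusion is immediate.

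Next I would establish the key identity
\[\int U^\delta\,d\mu=\langle\mu,\delta^A\rangle\quad\text{for every }\mu\in\mathcal E^+(A).\]
Indeed, $\delta^A\in\mathcal E^+(A)$ is a measure of finite energy with $U^{\delta^A}=U^\delta$ n.e.\ on $A$ (Theorem~\ref{th-bal} and (\ref{eq-bal})); since any $\mu\in\mathcal E^+(A)$ is concentrated on $A$ and, being of finite energy, is $C$-absolutely continuous, Lemma~\ref{l-negl} turns this n.e.\ equality into a $\mu$-a.e.\ one, and integration gives the claim. Combining it with $I_f(\mu)=\|\mu\|^2-2\int U^\delta\,d\mu$ yields, by completing the square,
\[I_f(\mu)=\|\mu-\delta^A\|^2-\|\delta^A\|^2\quad\text{for all }\mu\in\mathcal E^+(A).\]
Because $\|\cdot\|$ is strongly continuous, so is $\mu\mapsto I_f(\mu)$ on $\mathcal E^+(A)$. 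Now take any minimizing net $(\mu_s)\in\mathbb M_f(A)$; by Lemma~\ref{la1} it converges strongly to $\xi_{A,f}$, and since $\mu_s,\xi_{A,f}\in\mathcal E^+(A)$ the displayed formula gives $I_f(\xi_{A,f})=\lim_s I_f(\mu_s)=w_f(A)$. This proves (\ref{ext1}).

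Finally I would settle the solvability criterion. If $\xi_{A,f}(\mathbb R^n)=1$, then $\xi_{A,f}\in\breve{\mathcal E}^+(A)=\breve{\mathcal E}^+_f(A)$ (every measure in $\mathcal E^+(A)$ is $f$-integrable, as $U^\tau$ is integrable against finite-energy measures while $U^\sigma$ is bounded on $A$ by virtue of (\ref{f3''})), so by (\ref{ext1}) it is admissible and minimizing, i.e.\ $\lambda_{A,f}=\xi_{A,f}$ exists. Conversely, if $\lambda_{A,f}$ exists, then $\lambda_{A,f}=\xi_{A,f}$ by Corollary~\ref{cor-sol-extr}, whence $\xi_{A,f}(\mathbb R^n)=\lambda_{A,f}(\mathbb R^n)=1$. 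Recalling $\xi_{A,f}(\mathbb R^n)\leqslant1$ from Remark~\ref{tmass'}, this is precisely the dichotomy (\ref{ext2}), and the identification $\xi_{A,f}=\lambda_{A,f}$ in the solvable case is already contained in the above.

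The main obstacle is the step that upgrades strong lower semicontinuity to strong continuity of $I_f$ on $\mathcal E^+(A)$; everything hinges on the identity $\int U^\delta\,d\mu=\langle\mu,\delta^A\rangle$, and thus on the fact---supplied by Theorem~\ref{th-bal}---that for the present $A$ and $\delta$ the balayage $\delta^A$ has finite energy and is concentrated on $A$. Once the squared-distance representation is in hand, the remaining steps are routine.
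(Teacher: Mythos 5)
Your proof is correct and follows essentially the same route as the paper: both arguments rest on the key fact that $\delta^A\in\mathcal E^+(A)$ with $U^{\delta^A}=U^\delta$ n.e.\ (hence, by Lemma~\ref{l-negl}, $\mu$-a.e.)\ on $A$, which gives $\int U^\delta\,d\mu=\langle\delta^A,\mu\rangle$ for $\mu\in\mathcal E^+(A)$ and thereby upgrades the lower semicontinuity of $I_f$ to genuine strong continuity along the minimizing net. The paper carries out this computation directly via Cauchy--Schwarz applied to $\delta^A$ and $\mu_s-\xi_{A,f}$, whereas you package it as the squared-distance identity $I_f(\mu)=\|\mu-\delta^A\|^2-\|\delta^A\|^2$; this is only a cosmetic difference.
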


\begin{proof} Fix $(\mu_s)_{s\in S}\in\mathbb M_f(A)$. Since $\mu_s\to\xi_{A,f}$ strongly (Lemma~\ref{la1}), whereas $\mathcal E^+(A)$ is strongly closed, see $(\mathcal P_4)$, the former relation in (\ref{ext1}) is obvious.

Furthermore,
\begin{equation}\label{limit2}\lim_{s\in S}\,\int f\,d\mu_s=\int f\,d\xi_{A,f},\end{equation}
because
\begin{equation*}\label{limit2'}
\lim_{s\in S}\,\int U^\delta\,d\mu_s=\lim_{s\in S}\,\langle\delta^A,\mu_s\rangle=\langle\delta^A,\xi_{A,f}\rangle=\int U^\delta\,d\xi_{A,f}.
\end{equation*}
Indeed, the last (similarly, the first) equality holds true since $U^\delta=U^{\delta^A}$ n.e.\ on $A$, hence $\xi_{A,f}$-a.e.\ (see the former relation in (\ref{ext1}) as well as Lemma~\ref{l-negl}); whereas the second equality follows from (\ref{conv}) by making use of the Cauchy--Schwarz inequality, applied to $\delta^A$ and $\mu_s-\xi_{A,f}$, elements of the pre-Hil\-bert space $\mathcal E$ (cf.\ Theorem~\ref{th-bal}(iii$_1$)).

Adding (\ref{limit2}) multiplied by $2$ to
\begin{equation}\label{muconv}
\lim_{s\in S}\,\|\mu_s\|^2=\|\xi_{A,f}\|^2,
\end{equation}
cf.\ (\ref{conv}), on account of (\ref{min}) we get
\begin{equation*}\label{limit1}I_f(\xi_{A,f})=\lim_{s\in S}\,I_f(\mu_s)=w_f(A),\end{equation*}
whence the latter relation in (\ref{ext1}). Thus, actually, $\xi_{A,f}\in\mathcal E^+_f(A)$.

Now, assuming that (\ref{ext2}) is fulfilled, we derive from (\ref{ext1}) that $\xi_{A,f}$ must indeed serve as the minimizer $\lambda_{A,f}$. For the "only if" part, see Corollary~\ref{cor-sol-extr}.
\end{proof}

\begin{lemma}\label{LLL}
  For the extremal measure $\xi=\xi_{A,f}$, we have
  \begin{equation}\label{pot-extr}
   U^\xi_f\geqslant C_\xi\text{ \ n.e.\ on $A$,}\end{equation}
  where
  \begin{equation}\label{Xi}C_\xi:=\int U^\xi_f\,d\xi=w_f(A)-\int f\,d\xi\in(-\infty,\infty).\end{equation}
  If moreover $f$ is l.s.c.\ on $\overline{A}$, then also
  \begin{equation}
   U^\xi_f\leqslant C_\xi\text{ \ on $S(\xi)$,}\label{pot-extr2}
  \end{equation}
  and hence
  \begin{equation}
   U^\xi_f=C_\xi\text{ \ $\xi$-a.e.}\label{pot-extr2'}
  \end{equation}
  \end{lemma}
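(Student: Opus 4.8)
The plan is to linearise the Gauss functional on $\mathcal E^+(A)$ and to recognise the extremal measure as a metric projection. Since $U^\delta=U^{\delta^A}$ n.e.\ on $A$ (Theorem~\ref{th-bal}(iii$_1$)) and every finite-energy measure is $C$-absolutely continuous, for each $\mu\in\mathcal E^+(A)$ one has $\int f\,d\mu=-\langle\delta^A,\mu\rangle$, whence $I_f(\mu)=\|\mu-\delta^A\|^2-\|\delta^A\|^2$; in particular $\breve{\mathcal E}^+_f(A)=\breve{\mathcal E}^+(A)$. Thus minimising $I_f$ over $\breve{\mathcal E}^+(A)$ is minimising the strong distance to $\delta^A$, and by Lemma~\ref{la1}, (\ref{conv}), and (\ref{ext1}) the extremal measure $\xi:=\xi_{A,f}$ is exactly the orthogonal projection of $\delta^A$ onto the convex, strongly closed (hence strongly complete) set $\mathcal K:=\overline{\breve{\mathcal E}^+(A)}$, the closure taken in $\mathcal E^+$. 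This yields $C_\xi=\|\xi\|^2-\langle\delta^A,\xi\rangle=\langle\xi-\delta^A,\xi\rangle=\int U^\xi_f\,d\xi$, finite because both $\xi,\delta^A\in\mathcal E^+$, and the two expressions in (\ref{Xi}) agree by $I_f(\xi)=w_f(A)$. The projection is characterised by the variational inequality $\langle\xi-\delta^A,\mu-\xi\rangle\geqslant0$ for all $\mu\in\mathcal K$ (cf.\ footnote~\ref{F}).

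First I would derive the lower bound (\ref{pot-extr}). Specialising the variational inequality to an arbitrary $\mu\in\breve{\mathcal E}^+(A)\subset\mathcal K$ and using $U^{\delta^A}=U^\delta$ $\mu$-a.e.\ gives $\int U^\xi_f\,d\mu=\langle\xi-\delta^A,\mu\rangle\geqslant C_\xi$. To pass to a pointwise statement, suppose $c_*\bigl(\{x\in A:U^\xi_f(x)<C_\xi\}\bigr)>0$. Since $U^\xi_f=U^\xi-U^\delta$ is Borel (and finite q.e.\ on $A$), the strengthened countable subadditivity of inner capacity (footnote~\ref{foot-sub}) yields $k\in\mathbb N$ with $c_*(E_k)>0$, where $E_k:=\{x\in A:U^\xi_f(x)<C_\xi-1/k\}$, and hence a compact $K\subset E_k$ with $0<c_*(K)<\infty$. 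Normalising the inner capacitary measure of $K$ produces $\mu\in\breve{\mathcal E}^+(A)$ with $\int U^\xi_f\,d\mu\leqslant C_\xi-1/k<C_\xi$, a contradiction. This proves (\ref{pot-extr}); by Lemma~\ref{l-negl} it also gives $U^\xi_f\geqslant C_\xi$ $\xi$-a.e.

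Next I would establish the equality (\ref{pot-extr2'}). Integrating $U^\xi_f\geqslant C_\xi$ against $\xi$ and recalling $\int U^\xi_f\,d\xi=C_\xi$ gives $C_\xi\bigl(1-\xi(\mathbb R^n)\bigr)\geqslant0$. If $\xi(\mathbb R^n)=1$, the $\xi$-integral of the nonnegative function $U^\xi_f-C_\xi$ vanishes, so $U^\xi_f=C_\xi$ $\xi$-a.e. The delicate regime is $\xi(\mathbb R^n)<1$: then Lemma~\ref{str-cl-1} forces $c_*(A)=\infty$, so $w(A)=1/c_*(A)=0$ supplies a sequence in $\breve{\mathcal E}^+(A)$ of vanishing energy, whence $0\in\mathcal K$; testing the variational inequality with $\mu=0$ gives $C_\xi=\langle\xi-\delta^A,\xi\rangle\leqslant0$. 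Together with $C_\xi\bigl(1-\xi(\mathbb R^n)\bigr)\geqslant0$ this forces $C_\xi=0$, and then $U^\xi_f\geqslant0$ $\xi$-a.e.\ with zero $\xi$-integral again yields $U^\xi_f=C_\xi$ $\xi$-a.e. I expect this pinning-down of $C_\xi=0$ in the mass-deficient case to be the main obstacle; it is precisely where the equivalence $\xi(\mathbb R^n)<1\Leftrightarrow c_*(A)=\infty$ and the presence of $0$ in $\mathcal K$ are indispensable.

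Finally, assuming $f$ l.s.c.\ on $\overline{A}$, the potential $U^\xi_f=U^\xi+f$ is l.s.c.\ as a sum of l.s.c.\ functions. By (\ref{pot-extr2'}) the set $\{U^\xi_f=C_\xi\}$ is $\xi$-full, hence meets every neighbourhood of each $x_0\in S(\xi)$; choosing $x_m\to x_0$ with $U^\xi_f(x_m)=C_\xi$ and invoking lower semicontinuity gives $U^\xi_f(x_0)\leqslant\liminf_{x\to x_0}U^\xi_f(x)\leqslant C_\xi$, which is (\ref{pot-extr2}). Combined with the reverse inequality $U^\xi_f\geqslant C_\xi$ holding $\xi$-a.e., this also re-confirms (\ref{pot-extr2'}), completing the proof.
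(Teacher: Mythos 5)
Your proof is correct, but it follows a genuinely different route from the paper's. The paper approximates $A$ by compact subsets $K$, invokes solvability on compacts (Corollary~\ref{qcomp}) together with the Frostman-type characterization of $\lambda_{K,f}$ from Theorem~\ref{th-ch1}, and then passes to the limit: for (\ref{pot-extr}) it uses that strong convergence of $\lambda_{K_j,f}$ to $\xi$ forces n.e.-pointwise convergence of the potentials along a subsequence, and for (\ref{pot-extr2}) it uses vague convergence together with the joint lower semicontinuity of $(x,\mu)\mapsto U^\mu(x)$. You instead exploit the standing hypothesis $f=-U^\delta$ of Section~\ref{subs-main2} and Theorem~\ref{th-bal}(iii$_1$) to rewrite $I_f(\mu)=\|\mu-\delta^A\|^2-\|\delta^A\|^2$ on $\breve{\mathcal E}^+(A)$, identify $\xi$ as the orthogonal projection of $\delta^A$ onto the strong closure of $\breve{\mathcal E}^+(A)$, and read everything off the variational inequality: the n.e.\ lower bound comes from testing against normalized capacitary measures of compact subsets of the exceptional set (a correct use of the strengthened countable subadditivity), and the $\xi$-a.e.\ equality from the identity $C_\xi\bigl(1-\xi(\mathbb R^n)\bigr)\geqslant0$ together with $C_\xi\leqslant0$ forced by $0$ lying in the closure when $c_*(A)=\infty$. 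Your approach buys two things: it bypasses Theorem~\ref{th-ch1} and the compact exhaustion entirely, and it delivers the slightly stronger conclusion that (\ref{pot-extr2'}) holds with no semicontinuity assumption on $f$, so that (\ref{pot-extr2}) becomes a one-line consequence of the lower semicontinuity of $U^\xi_f$ on $\overline{A}$. The cost is that the argument is tied to the representation $f=-U^\delta$ (so it would not transfer to general fields of form (\ref{f1})), whereas the paper's limiting argument is structurally independent of that representation and is reused elsewhere for the convergence theorems.
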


\begin{proof} By Corollary~\ref{qcomp}, there exists the (unique) solution $\lambda_{K,f}$ to Problem~\ref{pr-main} with $A:=K$, where $K\in\mathfrak C_A$ is large enough ($K\geqslant K_0$), while according to Theorem~\ref{th-ch1},
\begin{equation}\label{K}
U_f^{\lambda_{K,f}}\geqslant\int U^{\lambda_{K,f}}_f\,d\lambda_{K,f}\text{ \ n.e.\ on $K$.}
\end{equation}
Since $w_f(K)\downarrow w_f(A)$ as $K\uparrow A$, see (\ref{limwe}), these $\lambda_{K,f}$ form a minimizing net, i.e.
\[(\lambda_{K,f})_{K\geqslant K_0}\in\mathbb M_f(A),\]
and hence $(\lambda_{K,f})$ converges strongly in $\mathcal E^+$ to the extremal measure $\xi$ (Lemma~\ref{la1}). Relations  (\ref{limit2}) and (\ref{muconv}), both with $(\lambda_{K,f})_{K\in\mathfrak C_A}$ in place of $(\mu_s)_{s\in S}$, give
\begin{equation}\label{conv5}
\lim_{K\uparrow A}\,\int U^{\lambda_{K,f}}_f\,d\lambda_{K,f}=\int U^\xi_f\,d\xi=:C_\xi\in(-\infty,\infty),
\end{equation}
the finiteness of $C_\xi$ being clear from the latter relation in (\ref{ext1}).

Fix $K_*\in\mathfrak C_A$. The strong topology on $\mathcal E^+$ being first-countable, one can choose a subsequence $(\lambda_{K_j,f})_{j\in\mathbb N}$ of the net $(\lambda_{K,f})_{K\in\mathfrak C_A}$ such that
\begin{equation}\label{J}
 \lambda_{K_j,f}\to\xi\text{ \ strongly (hence vaguely) in $\mathcal E^+$ as $j\to\infty$.}
\end{equation}
There is certainly no loss of generality in assuming
$K_*\subset K_j$ for all $j$,
for if not, we replace $K_j$ by $K_j':=K_j\cup K_*$; then, by the monotonicity of $\bigl(w_f(K)\bigr)_{K\in\mathfrak C_A}$, the sequence $(\lambda_{K_j',f})_{j\in\mathbb N}$ remains minimizing, and hence also converges strongly to $\xi$.

Due to the arbitrary choice of $K_*\in\mathfrak C_A$, (\ref{pot-extr}) will follow once we show that
\begin{equation}\label{JJ'}U^\xi_f\geqslant C_\xi\text{ \ n.e.\ on $K_*$}.\end{equation}
Passing if necessary to a subsequence and changing the notation, we conclude from (\ref{J}), by making use of \cite[p.~166, Remark]{F1}, that
\begin{equation}\label{JJ}U^\xi=\lim_{j\to\infty}\,U^{\lambda_{K_j,f}}\text{ \ n.e.\ on $\mathbb R^n$}.\end{equation}
Therefore, applying (\ref{K}) to each $K_j$, and then letting $j\to\infty$, on account of (\ref{conv5}) and (\ref{JJ}) we arrive at (\ref{JJ'}). (Here the countable subadditivity of inner capacity on universally measurable sets has been utilized, see e.g.\ \cite[p.~144]{L}.)

Assume now that $f$ is l.s.c.\ on $\overline{A}$. By Theorem~\ref{th-ch1}, then
\[U_f^{\lambda_{K_j,f}}\leqslant\int U^{\lambda_{K_j,f}}_f\,d\lambda_{K_j,f}\text{ \ on\/ $S(\lambda_{K_j,f})$},\]
where $(K_j)\subset\mathfrak C_A$ is the sequence chosen above. Since $(\lambda_{K_j,f})$ converges to $\xi$ vaguely, see (\ref{J}), for every $x\in S(\xi)$ there exist a subsequence $(K_{j_k})$ of $(K_j)$ and points $x_{j_k}\in S(\lambda_{K_{j_k},f})$, $k\in\mathbb N$, such that $x_{j_k}$ approach $x$ as $k\to\infty$. Thus
\[U_f^{\lambda_{K_{j_k},f}}(x_{j_k})\leqslant\int U^{\lambda_{K_{j_k},f}}_f\,d\lambda_{K_{j_k},f}\text{ \ for all $k\in\mathbb N$}.\]
Letting here $k\to\infty$, in view of (\ref{conv5}) and the lower semicontinuity of the mapping $(x,\mu)\mapsto U^\mu(x)$ on $\mathbb R^n\times\mathfrak M^+$, where $\mathfrak M^+$ is equipped with the vague topology \cite[Lemma~2.2.1(b)]{F1}, we obtain (\ref{pot-extr2}). Finally, combining (\ref{pot-extr2}) with (\ref{pot-extr}) yields $U_f^\xi=C_\xi$ n.e.\ on $S(\xi)\cap A$, whence (\ref{pot-extr2'}), for $\xi\in\mathcal E^+(A)$ (Lemma~\ref{LL}).
\end{proof}

\subsection{Proof of Theorem~\ref{th3'}}\label{sec-pr3} Under the hypotheses $(\mathcal P_2')$, $(\mathcal P_3')$, and $(\mathcal P_4)$, assume moreover that (\ref{leq1}) is fulfilled, where $\delta$ is the measure appearing in (\ref{f3'}).

The proof is mainly based on Theorems~\ref{th-ch2} and \ref{th-bal}, and it is given in four steps.\smallskip

{\it Step 1.} Assume first that $c_*(A)<\infty$. Due to $(\mathcal P_2')$, $(\mathcal P_3')$, and $(\mathcal P_4)$, $\delta^A$ and $\gamma_A$, the inner balayage of $\delta$ to $A$ and the inner capacitary measure of $A$, respectively, are both of finite energy, and they are concentrated on $A$, i.e.
\begin{equation}\label{zk}\delta^A,\,\gamma_A\in\mathcal E^+(A)\end{equation}
(see Theorem~\ref{th-bal}(iii$_1$) and \cite[Theorem~7.2]{Z-arx-22}, respectively).\footnote{Also note that $\gamma_A$ is nonzero, which is obvious from $c_*(A)>0$, cf.\ (\ref{iv;}).} We aim to show that
\begin{equation}\label{lk}
\beta:=\delta^A+\eta_{A,f}\gamma_A,
\end{equation}
$\eta_{A,f}$ being introduced in (\ref{eqalt}),
serves as the (unique) solution to Problem~\ref{pr-main}. (Note that this would provide an alternative proof of the solvability of Problem~\ref{pr-main}; compare with Theorem~\ref{th2'} and its proof, given in Section~\ref{sec-pr1}.)

Indeed, \cite[Corollary~4.9]{Z-bal}, quoted in footnote~\ref{f-cor} above, yields
\begin{equation}\label{balbound}0\leqslant\delta^A(\mathbb R^n)\leqslant\delta(\mathbb R^n)\leqslant1,\end{equation}
the last inequality being valid by virtue of (\ref{leq1}). Thus $\eta_{A,f}\in[0,\infty)$, and combining $c_*(A)=\gamma_A(\mathbb R^n)$ (see \cite[Theorem~2.6]{L}) with (\ref{eqalt}), (\ref{zk}), and (\ref{lk}) shows that, actually, $\beta\in\breve{\mathcal E}^+(A)$. Moreover, $\beta\in\breve{\mathcal E}^+_f(A)$, for
\begin{equation*}\label{finn}
\int U^\delta\,d\beta=\langle\delta^A,\beta\rangle<\infty.
\end{equation*}
According to Theorem~\ref{th-ch2}, $\beta=\lambda_{A,f}$ will therefore follow once we verify the inequality
\begin{equation}\label{qk1}U_f^\beta\geqslant\int U_f^\beta\,d\beta\text{ \ n.e.\ on $A$}.\end{equation}

By the strengthened version of countable subadditivity for inner capacity, we infer from (\ref{lk}), $U^{\gamma_A}=1$ n.e.\ on $A$ \cite[p.~145]{L}, and (\ref{eq-bal}) (applied to $\delta$) that
\begin{equation}\label{qk2}U_f^\beta=U^\beta-U^\delta=U^{\delta^A-\delta}+\eta_{A,f}U^{\gamma_A}=\eta_{A,f}\text{ \ n.e.\ on $A$,}\end{equation}
hence $\beta$-a.e. Therefore,
\[\int U_f^\beta\,d\beta=\eta_{A,f}\beta(\mathbb R^n)=\eta_{A,f},\] which substituted into (\ref{qk2}) gives (\ref{qk1}). Thus the solution $\lambda_{A,f}$ to Problem~\ref{pr-main} does indeed exist, and moreover $\lambda_{A,f}=\beta$ and $c_{A,f}=\eta_{A,f}$,
$c_{A,f}$ being the inner $f$-weighted equilibrium constant. This establishes (\ref{const-alt}) as well as the representation
\begin{equation}\label{pres}
\lambda_{A,f}=\delta^A+\eta_{A,f}\gamma_A.
\end{equation}

As $(\gamma_A)^A=\gamma_A$ \cite[Lemma~9.1]{Z-arx-22}, identity (\ref{pres}) can be rewritten in the form
\begin{equation*}\label{bal}
\lambda_{A,f}=(\delta+\eta_{A,f}\gamma_A)^A.
\end{equation*}
Applying Theorem~\ref{th-bal}, (i$_1$) and (ii$_1$), to $\delta+\eta_{A,f}\gamma_A$ in place of $\delta$, which is possible because $\eta_{A,f}\gamma_A\in\mathcal E^+$, we therefore conclude that $\lambda_{A,f}$ can be characterized as the unique measure of minimum energy, resp.\ of minimum potential, within the class of all $\nu\in\mathfrak M^+$ having the property $U^\nu\geqslant U^\delta+\eta_{A,f}U^{\gamma_A}$  n.e.\ on $A$,
or equivalently
\[U^\nu_f\geqslant\eta_{A,f}\text{ \ n.e.\ on $A$}.\]
This establishes assertion (i), resp.\ (ii), of Theorem~\ref{th3'}.

Similarly, Theorem~\ref{th-bal}(iii$_1$) applied to $\delta+\eta_{A,f}\gamma_A$ results in Theorem~\ref{th3'}(iii).\smallskip

{\it Step 2.} Assume now that $c_*(A)=\infty$, and that condition (\ref{bal1}) is fulfilled; then necessarily $\delta^A(\mathbb R^n)=1$. Actually, $\delta^A\in\breve{\mathcal E}^+(A)$, see Theorem~\ref{th-bal}(iii$_1$), whence
\begin{equation}\label{zetaA}
\delta^A\in\breve{\mathcal E}^+_f(A),
\end{equation}
for
\[\int U^\delta\,d\delta^A=\langle\delta^A,\delta^A\rangle<\infty.\]
We aim to show that $\delta^A$ serves as the (unique) solution to Problem~\ref{pr-main}, i.e.
\begin{equation}\label{lb}
\delta^A=\lambda_{A,f}.
\end{equation}

Noting that
\begin{equation}\label{417}
U_f^{\delta^A}=U^{\delta^A}-U^\delta=0\text{ \ n.e.\ on $A$},
\end{equation}
hence $\delta^A$-a.e., we get
\[\int U_f^{\delta^A}\,d\delta^A=0,\]
which substituted into (\ref{417}) gives
\[U_f^{\delta^A}=\int U_f^{\delta^A}\,d\delta^A\text{ \ n.e.\ on $A$}.\]
By Theorem~\ref{th-ch2}, this together with (\ref{zetaA}) yields (\ref{lb}) as well as $c_{A,f}=0$. Furthermore, observing from (\ref{eqalt}) that $\eta_{A,f}$ along with $c_{A,f}$ equals $0$, we arrive at (\ref{const-alt}).

We finally note that, due to the equalities $\lambda_{A,f}=\delta^A$ and $\eta_{A,f}=0$ thus obtained, assertions (i)--(iii) of Theorem~\ref{th3'} follow directly from Theorem~\ref{th-bal}, (i$_1$)--(iii$_1$).\smallskip

{\it Step 3.} The aim of this step is to show that assumption (\ref{bal1}) is not only sufficient, but also necessary for the existence of the solution $\lambda_{A,f}$. Assume to the contrary that $\lambda_{A,f}$ exists, but (\ref{bal1}) fails to hold; in view of (\ref{balbound}), then necessarily
\begin{equation}\label{ci'}
 c_*(A)=\infty\text{ \ and \ }\delta^A(\mathbb R^n)<1.
\end{equation}

According to Corollary~\ref{qcomp}, for each $K\in\mathfrak C_A$ large enough ($K\geqslant K_0$), there is the solution $\lambda_{K,f}$ to Problem~\ref{pr-main} with $A:=K$; and moreover the net $(\lambda_{K,f})_{K\geqslant K_0}$ converges strongly and vaguely to the extremal measure $\xi_{A,f}$, determined by Lemma~\ref{la1}. We claim that, due to the former relation in (\ref{ci'}),
\begin{equation}\label{eqq}
 \xi_{A,f}=\delta^A.
\end{equation}

By (\ref{eqalt}) and (\ref{pres}), both applied to $K\geqslant K_0$,
\begin{equation}\label{fol}
\lambda_{K,f}=\delta^K+\widetilde{\eta}_{K,f}\lambda_K,
\end{equation}
where $\lambda_K:=\gamma_K/c(K)$ is the solution to Problem~\ref{pr-main} with $A:=K$ and $f:=0$, and
\[\widetilde{\eta}_{K,f}:=1-\delta^K(\mathbb R^n).\]
But the net $(\widetilde{\eta}_{K,f})_{K\geqslant K_0}\subset\mathbb R$ is bounded since, in view of (\ref{balbound}) with $A:=K$,
\[0\leqslant\delta^K(\mathbb R^n)\leqslant\delta(\mathbb R^n)\leqslant1\text{ \ for all $K\in\mathfrak C_A$}.\]
Furthermore, by virtue of (\ref{char1}) and (\ref{char2}),
\[\delta^K\to\delta^A\text{ \ strongly (and vaguely) in $\mathcal E^+$ as $K\uparrow A$}.\]
Thus, if we show that
\begin{equation}\label{lconv}
\lambda_K\to0\text{ \ strongly in $\mathcal E^+$ as $K\uparrow A$},
\end{equation}
identity (\ref{eqq}) will follow from (\ref{fol}) by passing to the limit as $K\uparrow A$, and making use of the triangle inequality in the pre-Hil\-bert space $\mathcal E$.

It is seen from (\ref{limwe}) that the net $(\lambda_K)_{K\geqslant K_0}$ is minimizing in Problem~\ref{pr-main} with $f=0$, i.e.\ $(\lambda_K)_{K\geqslant K_0}\in\mathbb M(A)$. Applying Lemma~\ref{la1}, we therefore conclude that there exists the unique extremal measure $\xi_A$ in Problem~\ref{pr-main} with $f=0$, and moreover $\lambda_K\to\xi_A$ strongly in $\mathcal E^+$ as $K\uparrow A$. This yields
\[\|\xi_A\|^2=\lim_{K\uparrow A}\,\|\lambda_K\|^2=\lim_{K\uparrow A}\,w(K)=0,\]
the last equality being caused by $c_*(A)=\infty$. Since the $\alpha$-Riesz kernel is strictly positive definite, we thus have $\xi_A=0$, which proves (\ref{lconv}), whence (\ref{eqq}).

Since $\lambda_{A,f}$ is assumed to exist, Corollary~\ref{cor-sol-extr} together with (\ref{eqq}) gives
\[\lambda_{A,f}=\xi_{A,f}=\delta^A,\]
which however is impossible, for $\delta^A(\mathbb R^n)<1$ by (\ref{ci'}).\smallskip

{\it Step 4.} To complete the proof of the theorem, it remains to establish (\ref{conv3'}). Applying (\ref{eqalt}) and (\ref{const-alt}) to each $K\in\mathfrak C_A$ large enough ($K\geqslant K_0$), we get
\[c_{K,f}=\frac{1-\delta^K(\mathbb R^n)}{c(K)}.\]
In view of (\ref{conv3}), (\ref{conv3'}) will therefore follow once we show that the net $\bigl(\delta^K(\mathbb R^n)\bigr)_{K\in\mathfrak C_A}$ increases. But this is obvious from \cite{Z-bal} (Corollaries~4.2 and 4.9), because for any $K,K'\in\mathfrak C_A$ such that $K'\geqslant K$, we have $\delta^K=(\delta^{K'})^K$, whence $\delta^K(\mathbb R^n)\leqslant\delta^{K'}(\mathbb R^n)$.

\subsection{Proof of Corollary~\ref{cortm}}\label{sec-exc} As noted in Theorem~\ref{th3'}, $\lambda_{A,f}\in\Lambda_{A,f}$,  the class $\Lambda_{A,f}$ being introduced in (\ref{gamma}). We thus only need to show that for any given $\mu\in\Lambda_{A,f}$,
 \begin{equation}\label{ml}
 \lambda_{A,f}(\mathbb R^n)\leqslant\mu(\mathbb R^n).
 \end{equation}
But according to Theorem~\ref{th3'}(ii), then necessarily
\[U^{\lambda_{A,f}}\leqslant U^\mu\text{ \ everywhere on $\mathbb R^n$},\]
and (\ref{ml}) follows by use of
the principle of positivity of mass \cite[Theorem~3.11]{FZ}.

\subsection{Proof of Corollary~\ref{cor2}}\label{sec-pr5} Let the assumptions of (a) be fulfilled. Due to the inequality $\delta^A(\mathbb R^n)\leqslant\delta(\mathbb R^n)$ \cite[Corollary~4.9]{Z-bal}, then necessarily
$\delta^A(\mathbb R^n)<1$, which implies, by use of Theorem~\ref{th3'}, that Problem~\ref{pr-main} indeed has no solution.

Let now $\delta\in\breve{\mathcal E}^+(A)$. In view of Theorem~\ref{th3'}, it is enough to consider the case where $c_*(A)=\infty$. The orthogonal projection of $\delta$ onto the (strongly closed by $(\mathcal P_4)$, hence strongly complete) cone $\mathcal E^+(A)=\mathcal E'(A)$ is certainly the same $\delta$, which means, by virtue of \cite[Theorem~3.1(b)]{Z-arx-22}, that
$\delta^A=\delta$,
whence $\delta^A(\mathbb R^n)=1$. Applying Theorem~\ref{th3'} once again, we conclude that the solution $\lambda_{A,f}$ does exist, and moreover $\lambda_{A,f}=\delta^A=\delta$, cf.\ the latter equality in (\ref{RR}). This completes the whole proof.

\subsection{Proof of Theorem~\ref{th-nonth}}\label{sec-pr6} Let $A$ be not inner $\alpha$-thin at infinity; then necessarily $c_*(A)=\infty$.
For (a$_1$), assume $\delta(\mathbb R^n)=1$. Since, by virtue of \cite[Corollary~5.3]{Z-bal2},
\[\delta^A(\mathbb R^n)=\delta(\mathbb R^n)=1,\]
Theorem~\ref{th3'} shows that $\lambda_{A,f}$ does indeed exist, and moreover $\lambda_{A,f}=\delta^A$ (cf.\ the latter equality in (\ref{RR})). Also, $c_{A,f}=0$, which is obvious from (\ref{eqalt}) and (\ref{const-alt}).

For (b$_1$), assume that $f$ is l.s.c.\ on $\overline{A}$. As seen from (a$_1$) and Corollary~\ref{cor2}(a), formula (\ref{iff}) will be proved once we verify the solvability of Problem~\ref{pr-main} in the case
\begin{equation}\label{D}
\delta(\mathbb R^n)>1.
\end{equation}
For the extremal measure $\xi=\xi_{A,f}$, suppose first that $C_\xi\geqslant0$, where $C_\xi$ is the (finite) constant appearing in Lemma~\ref{LLL}. Then, by (\ref{pot-extr}),
\[U^\xi\geqslant U^\delta+C_\xi\geqslant U^\delta\text{ \ n.e.\ on $A$.}\]
The set $A$ not being inner $\alpha$-thin at infinity, an application of the strengthened version of the principle of positivity of mass \cite[Theorem~1.2]{Z-Deny} gives
\[1<\delta(\mathbb R^n)\leqslant\xi(\mathbb R^n),\]
cf.\ (\ref{D}), which however contradicts the inequality $\xi(\mathbb R^n)\leqslant1$ (Remark~\ref{tmass'}). Thus
\begin{equation}\label{Cxi}
C_\xi<0.
\end{equation}
But, by (\ref{pot-extr2'}), $U^\xi_f=C_\xi$ holds true $\xi$-a.e., which implies by integration
\begin{equation*}\label{comb}
\int U^\xi_f\,d\xi=C_\xi\cdot\xi(\mathbb R^n).
\end{equation*}
Since $C_\xi\ne0$,
\[\xi(\mathbb R^n)=\frac{\int U^\xi_f\,d\xi}{C_\xi},\]
whence $\xi(\mathbb R^n)=1$, by (\ref{Xi}). By virtue of Lemma~\ref{LL}, the extremal measure $\xi$ serves, therefore, as the solution $\lambda_{A,f}$. Substituting $\lambda_{A,f}=\xi$ into (\ref{cc}) we finally obtain
\[c_{A,f}=\int U^\xi_f\,d\xi,\]
which combined with (\ref{Xi}) and (\ref{Cxi}) proves (\ref{F1}).

\subsection{Proof of Theorem~\ref{th-th}}\label{sec-pr7} Under the assumptions of the theorem, $c_*(A)=\infty$ and $\delta(\mathbb R^n)\leqslant1$. Therefore, by Theorem~\ref{th3'}, $\lambda_{A,f}$ exists if and only if $\delta^A(\mathbb R^n)=1$.
We aim to show that, due to the remaining requirements of the theorem, we actually have
$\delta^A(\mathbb R^n)\ne1$, and so $\lambda_{A,f}$ fails to exist.
Since $\overline{A}$ is $\alpha$-thin at infinity, there exists the $\alpha$-Riesz equilibrium measure $\gamma$ of $\overline{A}$, treated in an extended sense where $I(\gamma)$ as well as $\gamma(\mathbb R^n)$ may be infinite (for more details, see \cite[Section~V.1.1]{L}, cf.\ \cite[Section~5]{Z-bal} and \cite[Sections~1.3, 2.1]{Z-bal2}). Applying \cite[Theorem~8.7]{Z-bal}, we therefore get
\[\bigl(\delta|_{\Omega_{\overline{A}}}\bigr)^{\overline{A}}(\mathbb R^n)<\delta|_{\Omega_{\overline{A}}}(\mathbb R^n),\]
whence
\[\delta^A(\mathbb R^n)<1,\] for, in consequence of \cite[Corollaries~4.2, 4.9]{Z-bal},
\[\bigl(\delta|_{\Omega_{\overline{A}}}\bigr)^A(\mathbb R^n)=\Bigl(\bigl(\delta|_{\Omega_{\overline{A}}}\bigr)^{\overline{A}}\Bigr)^A(\mathbb R^n)\leqslant\bigl(\delta|_{\Omega_{\overline{A}}}\bigr)^{\overline{A}}(\mathbb R^n).\]

\subsection{Proof of Theorem~\ref{conv6}}\label{sec-pr8'} By Theorems~\ref{th2'} and \ref{th-nonth}, the minimizers $\lambda_{A_j,f}$, $j\in\mathbb N$, and $\lambda_{A,f}$ do exist. (Here we use the fact that for closed sets, $(\mathcal P_4)$ necessarily holds, cf.\ Theorem~\ref{l-quasi}.) Therefore, applying Theorem~\ref{th4'c} we obtain (\ref{cont-count1})--(\ref{cont-count13}).

\subsection{Proof of Theorem~\ref{conv7}}\label{sec-pr8} Under the assumptions of the theorem, $\lambda_{A_j,f}$, $j\in\mathbb N$, and $\lambda_{A,f}$ do exist, see Theorem~\ref{th-nonth}(a$_1$). (Here it should be taken into account that for quasiclosed sets, $(\mathcal P_4)$ necessarily holds, see Theorem~\ref{l-quasi}, and that a countable intersection of quasiclosed sets is likewise quasiclosed, see \cite[Lemma~2.3]{F71}.)
Fix $k\in\mathbb N$. Applying the former relation in (\ref{iff''}) to each of $A_j$ as well as to $A$ gives
\begin{gather}
 \lambda_{A_j,f}=\delta^{A_j}=\bigl(\delta^{\overline{A_k}}\bigr)^{A_j}\text{ \ for all $j\geqslant k$},\label{G1}\\
 \lambda_{A,f}=\delta^A=\bigl(\delta^{\overline{A_k}}\bigr)^{A},\label{G2}
\end{gather}
the latter equality in (\ref{G1}), resp.\ (\ref{G2}), being valid by virtue of \cite[Corollary~4.2]{Z-bal}.
But according to \cite[Theorem~4.10]{Z-arx1} applied to $\delta^{\overline{A_k}}\in\mathcal E^+$, cf.\ (\ref{del-f}),
\begin{gather*}
 \bigl(\delta^{\overline{A_k}}\bigr)^{A_j}\to\bigl(\delta^{\overline{A_k}}\bigr)^{A}\text{ \ strongly and vaguely in $\mathcal E^+$ as $j\to\infty$},\\
 U^{(\delta^{\overline{A_k}})^{A_j}}\downarrow U^{(\delta^{\overline{A_k}})^{A}}\text{ \ pointwise on $\mathbb R^n$ as $j\to\infty$},
\end{gather*}
which combined with (\ref{G1}) and (\ref{G2}) establishes (\ref{cont-count12}) and (\ref{cont-count1du}).

By making use of (\ref{cont-count12}), in the same manner as in the proof of Lemma~\ref{LL} we get
\[\lim_{j\to\infty}\,\int U^\delta\,d\lambda_{A_j,f}=\lim_{j\to\infty}\,\langle\delta^A,\lambda_{A_j,f}\rangle=\langle\delta^A,\lambda_{A,f}\rangle=\int U^\delta\,d\lambda_{A,f},\]
which together with
\[\lim_{j\to\infty}\,\|\lambda_{A_j,f}\|^2=\|\lambda_{A,f}\|^2\]
results in (\ref{cont-count1d}).

To complete the proof, observe that the remaining relation (\ref{cont-count13}) is obvious, because under the stated assumptions, $c_{A_j,f}=c_{A,f}=0$, see the latter relation in (\ref{iff''}).

\subsection{Proof of Theorem~\ref{th-desc}}\label{123}As seen from \cite[Corollary~5.3]{Z-bal2}, in either of the cases (a$_2$) or (b$_2$), (\ref{bal1}) is fulfilled. Therefore, by Theorem~\ref{th3'}, $\lambda_{A,f}$ does exist; it is representable by the former equality in (\ref{RR}) if (a$_2$) occurs, or by the latter  equality otherwise. Applying \cite[Theorems~7.2, 8.5]{Z-bal}, providing  descriptions of the supports of capacitary and swept measures, we obtain (\ref{det5}).
(Regarding the first term on the right-hand side in the latter equality, take into account that, due to $(\mathcal P_2')$ and $(\mathcal P_3')$, we have $\delta|_A\in\mathcal E^+(A)$, whence $(\delta|_A)^A=\delta|_A$, $(\delta|_A)^A$ being the orthogonal projection of $\delta|_A$ onto the strongly closed by $(\mathcal P_4)$, hence strongly complete, convex cone $\mathcal E^+(A)$.)

\subsection{Proof of Theorem~\ref{th-desc2}}\label{sec-pr9} Under the hypotheses of the theorem, $\lambda_{A,f}$ does exist, see Theorem~\ref{th-nonth}(b$_1$). Assuming to the contrary that $S(\lambda_{A,f})$ is noncompact, we conclude by making use of (\ref{t1-pr2})
that there must exist a sequence $(x_j)\subset A$ approaching the Alexandroff point $\infty_{\mathbb R^n}$ as $j\to\infty$, and such that
\[f(x_j)\leqslant U_f^{\lambda_{A,f}}(x_j)=c_{A,f}<0\text{ \ for all $j\in\mathbb N$},\]
the latter inequality being valid by (\ref{F1}). On account of (\ref{limm}), we thus have
\[\lim_{x\to\infty_{\mathbb R^n}, \ x\in A}\,U^\delta(x)>0.\]
On the other hand, it is clear from \cite[Theorem~2.1(ii)]{Z-bal2} that this limit must be equal to $0$, the set $A$ being not inner $\alpha$-thin at infinity. Contradiction.

\section{On the possible extensions of the established theory} If $\omega=0$, where $\omega\in\mathfrak M(\mathbb R^n)$ is the measure appearing in representation (\ref{f1}), then some of the results of this study have already been extended to an arbitrary perfect kernel $\kappa$ on a locally compact Hausdorff space $X$, satisfying the first and the second maximum principles, see \cite[Theorems~1.2, 1.5]{Z-Oh}. However, a further progress in this direction would require the development of a theory of balayage for Radon measures on $X$ of {\it infinite} energy, which is so far an open question, cf.\ \cite[Problem~7.1]{Z-arx}.

\section{Acknowledgements} The author is deeply indebted to D.P.\ Hardin
for reading and commenting on the manuscript. 


\begin{thebibliography}{99}
\setlength{\parskip}{1.2ex plus 0.5ex minus 0.2ex}

\bibitem{BDO}
Benko, D., Dragnev, P.D., Orive, R.: On point-mass Riesz external fields on the real axis.
J. Math. Anal. Appl. {\bf 491}, 124299 (2020)

\bibitem{BH} Bliedtner, J., Hansen, W.: Potential Theory. An Analytic and Probabilistic Approach to Balayage. Springer, Berlin (1986)

\bibitem{BHS} Borodachov, S.V., Hardin, D.P., Saff, E.B.: Discrete Energy on Rectifiable Sets. Springer, Berlin (2019)

\bibitem{B2} Bourbaki, N.: Integration. Chapters~1--6.
Springer, Berlin (2004)

\bibitem{Brelot} Brelot, M.: Sur le r\^{o}le du point \`{a} l'infini dans la th\'{e}orie des fonctions harmoniques. Ann. \'{E}c. Norm. Sup. {\bf 61}, 301--332 (1944)

\bibitem{Brelo1} Brelot, M.: El\'ements de la Th\'eorie Classique du Potential. Les Cours Sorbonne, Paris (1961)

\bibitem{Ca1} Cartan, H.: Th\'eorie du potentiel newtonien: \'energie, capacit\'e, suites de potentiels. Bull. Soc. Math. France {\bf 73}, 74--106 (1945)

\bibitem{Ca2} Cartan, H.: Th\'eorie g\'en\'erale du balayage en potentiel newtonien. Ann. Univ. Fourier Grenoble {\bf 22}, 221--280 (1946)

\bibitem{CSW}Chafa\"{\i}, D., Saff, E.B., Womersley, R.S. Threshold condensation to singular support for a Riesz equilibrium problem. Anal. Math. Phys. {\bf 13}, 19 (2023)

\bibitem{D1} Deny, J.: Les potentiels d'\'energie finie. Acta Math.\ {\bf 82}, 107--183 (1950)

\bibitem{De2} Deny, J.: Sur la d\'efinition de l'\'energie en th\'eorie du potentiel. Ann.\ Inst.\ Fourier {\bf 2}, 83--99 (1950)

\bibitem{Doob} Doob, J.L.: Classical Potential Theory and Its Probabilistic Counterpart. Springer, Berlin (1984)

\bibitem{Dr0} Dragnev, P.D., Orive, R., Saff, E.B., Wielonsky F.: Riesz energy problems with external fields and related theory.
Constr. Approx. (2022). https://doi.org/10.1007/s00365-022-09588-z

\bibitem{E2}
Edwards, R.E.: Functional Analysis. Theory and Applications. Holt,
Rinehart and Winston, New York (1965)

\bibitem{F1} Fuglede, B.: On the theory of potentials in locally compact spaces. Acta Math. {\bf 103}, 139--215  (1960)

\bibitem{F71} Fuglede, B.: The quasi topology associated with a countably subadditive set function. Ann. Inst. Fourier Grenoble {\bf 21}, 123--169 (1971)

\bibitem{Fu4} Fuglede, B.: Capacity as a sublinear functional generalizing an integral. Mat. Fys. Medd. Dan. Vid. Selsk. {\bf 38}, no.~7 (1971)

\bibitem{FZ} Fuglede, B., Zorii, N.: Green kernels associated with Riesz kernels. Ann. Acad. Sci. Fenn. Math. {\bf 43}, 121--145 (2018)

\bibitem{Gau}
Gauss, C.F.: Allgemeine Lehrs\"atze in Beziehung auf die im verkehrten Verh\"altnisse des
  Quadrats der Entfernung wirkenden Anziehungs-- und Absto\ss
  ungs--Kr\"afte (1839). Werke 5, 197--244 (1867)

\bibitem{K}
Kelley, J.L.: General Topology. Princeton, New York (1957)

\bibitem{KM} Kurokawa, T., Mizuta, Y.: On the order at infinity of Riesz potentials. Hiroshima Math. J. {\bf 9}, 533--545 (1979)

\bibitem{L} Landkof, N.S.: Foundations of Modern Potential Theory. Springer, Berlin (1972)

\bibitem{ST} Saff, E.B., Totik, V: Logarithmic Potentials with External Fields. Springer, Berlin (1997)

\bibitem{O} Ohtsuka, M.: On potentials in locally compact spaces. J. Sci. Hiroshima Univ. Ser.\ A-I {\bf 25}, 135--352 (1961)

\bibitem{Z5a} Zorii, N.V.: Equilibrium potentials with external fields. Ukrainian Math. J. {\bf 55},
1423--1444 (2003)

\bibitem{Z-bal} Zorii, N.: A theory of inner Riesz balayage and its applications. Bull. Pol. Acad. Sci. Math. {\bf 68}, 41--67 (2020)

\bibitem{Z-bal2} Zorii, N.: Harmonic measure, equilibrium measure, and thinness at infinity in the theory of Riesz potentials. Potential Anal. {\bf 57}, 447--472 (2022)

\bibitem{Z-arx1} Zorii, N.: Balayage of measures on a locally compact space.
Anal. Math. {\bf 48}, 249--277 (2022)

\bibitem{Z-arx-22} Zorii, N.: On the theory of capacities on locally compact spaces and its interaction with the theory of balayage. Potential Anal. (2022). https://doi.org/10.1007/s11118-022-10010-3

\bibitem{Z-arx} Zorii, N.: On the theory of balayage on locally compact spaces. Potential Anal. (2022). https://doi.org/10.1007/s11118-022-10024-x

\bibitem{Z-Deny} Zorii, N.: On the role of the point at infinity in Deny's principle of positivity of mass for Riesz potentials.  Anal. Math. Phys. (2023). https://doi.org/10.1007/s13324-023-00793-y

\bibitem{Z-Oh} Zorii, N.: Minimum energy problems with external fields on locally compact spaces. arXiv:2207.14342 (2022)
\end{thebibliography}
\end{document}